\definecolor{labelkey}{rgb}{0,0,1}
\definecolor{refkey}{rgb}{1,1,1}
\newtheorem{thm}{Theorem}[section]
\newtheorem{lemma}[thm]{Lemma}
\newtheorem{cor}[thm]{Corollary}
\newtheorem{prop}[thm]{Proposition}
\theoremstyle{remark}
\newtheorem{remark}[thm]{Remark}
\newtheorem{defin}[thm]{Definition}
\numberwithin{equation}{section}
\newcommand{\RR}{{\mathbb R}}
\newcommand{\lir}{{\lim_{r \to +\infty}}}
\newcommand{\liro}{{\lim_{r \to 0}}}
\def\la{\lambda}
\def\R{\boldsymbol{R}}
\def\Q{\boldsymbol{Q}}
\def\eu{\textrm{e}}
\def\ep{\varepsilon}
\newcommand{\supp}{\mathop\mathrm{supp}\nolimits}
\author{Luca Bisconti and Matteo Franca} \address[L.\
Bisconti]{Dipartimento di Matematica e Informatica, Universit\`a degli
  Studi di Firenze, Via S.\ Marta 3, I-50139 Firenze, Italy. Partially
supported by G.N.A.M.P.A. - INdAM (Italy).}
\address[M.\ Franca]{Dipartimento di Ingegneria Industriale e Scienze
  Matematiche,
  Universit\`a Politecnica delle Marche, Via Brecce Bianche, I-60131
  Ancona, Italy. Partially
supported by G.N.A.M.P.A. - INdAM (Italy)}
\begin{document}

\title{On a non-homogeneous and non-linear heat equation.}

\begin{abstract}
  We consider the Cauchy-problem for a parabolic equation of the
  following type:
  \begin{align*}
    \frac{\partial u}{\partial t}= \Delta u+ f(u,|x|), 
  \end{align*}
  where $f=f(u,|x|)$ is supercritical. We supply this equation by the initial
  condition $u(x,0)=\phi$, and we allow $\phi$ to be either bounded or
  unbounded in the origin but smaller than stationary singular
  solutions. We discuss local existence and long time behaviour for
  the solutions $u(t,x;\phi)$ for a wide class of non-homogeneous
  non-linearities $f$.  We show that in the supercritical case, Ground
  States with slow decay lie on the threshold between blowing up
  initial data and the basin of attraction of the null solution.  Our
  results extend previous ones allowing Matukuma-type potential and
  more generic dependence on $u$.

  Then, we further explore such a threshold in the subcritical case
  too. We find two families of initial data $\zeta(x)$ and $\psi(x)$
  which are respectively above and below the threshold, and have
  arbitrarily small distance in $L^{\infty}$ norm, whose existence is
  new even for $f(u,r)=u^{q-1}$.  Quite surprisingly both $\zeta(x)$
  and $\psi(x)$ have fast decay (i.e. $\sim |x|^{2-n}$), while the
  expected critical asymptotic behavior is slow decay (i.e. $\sim
  |x|^{2/q-2}$).
\end{abstract}
\maketitle

\noindent \textbf{Key Words:}  Cauchy-problem, semi-linear heat
equation, singular solutions,\\
 stability.\\
\textbf{MSC (2010)}: $\,$ 35k80, 35b60, 35b40 \vspace{5mm}

\pagestyle{myheadings} \markboth{}{On a parabolic equation...}

\section{Introduction}
The purpose of this paper is to study the asymptotic behavior of
positive solutions of the following Cauchy problem
\begin{align}
  &\frac{\partial u}{\partial t}= \Delta u+  f(u,|x|), \label{parab}  \\
  &u(x,0) = \phi(x), \label{data}
\end{align}
where $x \in \RR^n$, $n >2$, and  $f=f(u, |x|)$ is a potential which is null for
$u=0$.

In the last 20 years this problem has raised a great interest,
starting from the model cases $f(u,|x|)=u^{q-1}$ and
$f(u,|x|)=|x|^{\delta}u^{q-1}$.  Due to symmetry
  reasons, from now on we use notations which are standard for the
stationary problem, so we refer to $f(u,|x|)=u^{q-1}$ as the
model case.  In so doing it will be more clear the relationship between the
critical  values for \eqref{laplace} appearing below, and their
meaning in other contexts of functional analysis.

We assume that $f$ is \emph{supercritical} with respect to the Serrin
critical exponent, i.e. $2_*:=\frac{2(n-1)}{n-2}$, and for some specific
results we require $f$ to be \emph{supercritical} also to the Sobolev
critical exponent, i.e. $2^*:=\frac{2n}{n-2}$. The exponents $2_\ast$
and $2^\ast$ are related to
the continuity of the trace operator in $L^q$ and to the possibility
to embed $H^1$ in $L^q$, respectively.

Here, we want to analyze the structure of the border of the
basin of attraction to the null solutions,
and the set of initial data $\phi$ of solutions of
\eqref{parab}--\eqref{data} which blow up in finite time.
Our main aim is to extend the discussion to a wide class of
potentials: For the remainder of the paper we will always assume the following
\begin{itemize}
\item[$\boldsymbol{F0}$:] The function $f(u,r)$ is locally Lipschitz
  in $u$ and $r$ for any $u \ge 0$ and $r>0$. Moreover $f(0,r) \equiv
  0$, $f(u,r)>0$ and $f(u,r)$ is increasing in $u$, for any $u>0$ and
  any $r>0$, and there is a constant $C(u)>0$ such that $f(u,r)r^2 \le
  C(u)$ for $0<r \le 1$.
\end{itemize}
\noindent Further hypotheses on $f$ will be given in the sequel (see
conditions $\boldsymbol{G0}$, $\boldsymbol{Gu}$,
and $\boldsymbol{Gs}$ in Section \ref{steadyasympt}). Possible
examples are the following
\begin{align}
  & f(u, |x|) = k_1(|x|)|u|^{q_1-1} ,   \label{eq:potential-0}\\
  & f(u, |x|) = k_1(|x|)|u|^{q_1-1} +
  k_2(|x|)|u|^{q_2-1}, 
  \label{eq:potential-1}\\
  & f(u, |x|) = k_1(|x|) \min \{ u^{q_1-1}, u^{q_2-1}
  \}, \label{eq:potential-2.5}
\end{align}
where $q_1<q_2$ and $k_i=k_i(|x|)$, $i= 1,2$, are supposed non-negative and
Lipschitz continuous, and such that
\begin{equation}\label{asintotico}
  k_i(r) \sim A_i r^{\delta_i} \,  \; \textrm{ as $r\to 0$,} \qquad
  k_i(r) \sim B_i r^{\eta_i} \,  \; \textrm{ as $r\to +\infty$,}
\end{equation}
where $A_i,B_i \ge 0$, $\sum_i A_i >0$, $\sum_i B_i >0$, $q_i>2$ and
$\delta_i,\eta_i > (n-2)q-(n-1)$, for $i=1,2$ (so for
$\delta_i=\eta_i=0$ we require $q_i>2_\ast$).  More precise
requirements on $k_i$, $i=1,2$, will be provided later on
according to the ones on $f$.

Due to the nature of the considered potentials, in general we cannot
expect the solutions of (\ref{parab})--(\ref{data}) to be
differentiable, or even continuous, everywhere. In
  fact, we deal also with solutions that may be not defined at $x=0$
  since they become unbounded.

 In Section~\ref{sec:local-existence}
we prove the existence of a proper class of weak solutions to the
considered problem (see Lemma~\ref{lemma-M} and
Theorem~\ref{teo:CB-CS-mild-sol} below) and we actually show their
improved properties.
We consider the classes of $C_B$-mild and $C_S$-mild solutions to
(\ref{parab})--(\ref{data}) (see the definitions \ref{def:CB-sol} and
\ref{def:CS-sol} below, see also \cite{W}) proving local and global
existence as well as uniqueness. \smallskip

 Let $u(x,t;\phi)$ be the solution of
(\ref{parab})--(\ref{data}). The analysis of the long time behavior of
$u(x,t;\phi)$ is strongly based on the separation properties of the
stationary solutions of (\ref{parab}), i.e. functions $u(x)$ solving
\begin{equation}\label{laplace}
  \Delta u+ f(u,|x|)=0 \, ,
\end{equation}
and in particular on the properties of radial solutions. Notice that
if $u(x)$ is a radial solutions of (\ref{laplace}), setting
$U(r)=u(x)$, for $r=|x|$, then  $U=U(r)$ solves
\begin{equation}\label{radsta}
  U''+\frac{n-1}{r} U'+ f(U,r)=0 \, ,
\end{equation}
where $``\,'\, "$ denotes the derivative with respect to $r$.

In the whole paper we use the following notation: $U(r)$
is regular if $U(0)=\alpha>0$,  so we set $U(r)=U(r,\alpha)$, and we
say that $U(r)$ has a \emph{non-removable singularity} (or shortly
that it is \emph{singular}) if $\liro U(r)=+\infty$.  Similarly, we say
that a positive solution $V(r)$ of (\ref{radsta}) has fast decay
(f.d.) if $\lir V(r) r^{n-2}=\beta>0$ and we set $V(r)=V(r,\beta)$, and
that $V(r)$  has slow decay (s.d.) if $\lir V(r) r^{n-2}=+\infty$.

Further, $U(r)$ is a ground state (G.S.) if it is a regular solution of
(\ref{radsta}) which is positive for any $r>0$.  Instead, we say that $U(r)$ is a
singular ground state (S.G.S.) if it is a singular solution of
(\ref{radsta}) which is positive for any $r>0$.  The asymptotic
behavior of singular and slow decay solution is well understood and
will be discussed in more details in Section~\ref{steadyasympt}.

Roughly speaking, the $\omega$-limit set of \eqref{parab} is (usually)
made up by the union of solutions of Equation \eqref{radsta}, see
e.g. \cite{PY1,PY2,PY3}, and these solutions are one of the ingredient
to construct sub and super-solutions to (\ref{parab}), see
e.g. \cite{W,GNW1}.

 We briefly review some known results concerning (\ref{laplace}) and
(\ref{parab})--(\ref{data}).
We need to introduce some additional parameters
which play a key role in what follows: Recalling that
$2_\ast=2\frac{n-1}{n-2}$ and that $2^\ast=\frac{2n}{n-2}$, we have
\begin{equation}\label{constant}
  \begin{aligned}
    P_F<2_\ast<2^\ast<\sigma^\ast\,\,\, \textrm{ where }\,\,\,
    P_F:= 2\frac{n+1}{n}, \\
    \sigma^\ast:= \left\{ \begin{array}{ll}
        \frac{(n-2)^2-4n+8
          \sqrt{n-1}}{(n-2)(n-10)} &
        \textrm{if $n>10$}
        \\ +\infty &
        \textrm{if $n\le 10$}
      \end{array} \right.
  \end{aligned}
\end{equation}
The parameters $P_F<2_\ast<2^\ast$ are critical exponents for this
problem and their role will be specified few lines below. Here, $P_F-1$ is the so
called Fujita exponent. \smallskip

 We assume first that $f$ is of type
(\ref{eq:potential-0}), with $k_1(|x|)=k_0(r)$, $r=|x|$, and $k_0(r):=K_0
r^{\delta_0}$, where $K_0$ is a positive constant and $\delta_0>-2$.
Let us also introduce the followings
\begin{equation} \label{eq:l0}
l_0:=2 \frac{q+\delta_0}{2+\delta_0}\,\,\, \textrm{ and }\,\,\,
m_0:=\frac{2}{l_0-2}=\frac{2+\delta_0}{q-2},
\end{equation}
and $m_0=m_0(l_0)$.

In this case, whenever
$l_0>2_\ast$, we have at least a S.G.S. with
s.d. $\phi_s(x)=P_1|x|^{-m_0}$, where $P_1>0$ is a computable
constant, which is unique for $l_0\ne 2^\ast$. Also note that if
$\delta_0=0$ then $l_0=q$ and $m_0=2/(q-2)$.  Moreover, all the regular
solutions of (\ref{radsta}) have a non-degenerate zero for $l_0 \in
(2,2^\ast)$, they are G.S. with f.d. for $l_0=2^\ast$, and they are
G.S. with s.d. for $l_0>2^\ast$ (see, e.g. \cite{W}).

 Again, if $2^\ast\le
l_0<\sigma^\ast$, then all the regular solutions cross each other,
while if $l_0 \ge \sigma^\ast$ and $\alpha_2>\alpha_1$, then
$U(r,\alpha_2)>U(r,\alpha_1)$ for any $r \ge 0$, see \cite{W}. In
fact,
when the structure of positive solution of (\ref{radsta}) changes, the
asymptotic behavior of solutions of (\ref{parab})--(\ref{data})
changes too. \smallskip

 Let us recall that all the solutions of \eqref{parab}--\eqref{data} blow up in finite
time if $l_0 \le P_F$, so the null solution is unstable in any
reasonable sense (see \cite{Fu,KST}). If $l_0>P_F$ the null solution is
stable with the suitable weighted $L^{\infty}$-norm, but still
``large" solutions blow up in finite time.

 There are several papers
devoted to explore the threshold between the basin of attraction of
the null solution and the set of initial data which blow up in finite
time (see, e.g. \cite{W,GNW1,GNW2} andalso \cite{PY1,PY2,PY3}).  It
seems that radial G.S. of (\ref{laplace}) play a key role in defining
such a border.  In particular Gui et al. in \cite{GNW1} (see also
\cite{W}) proved the following:
\begin{thm}\label{base}\cite{GNW1}
  Assume $f(u,r)=K_0 r^{\delta_0} u^{q-1}$ and $2^\ast \le l_0 <
  \sigma^\ast$. Then
  \begin{itemize}
  \item[\textbf{(1)}:] If $\phi(x) \lneqq U(|x|,\alpha)$ for some $\alpha>0$,
    then $\| u(x, t ; \phi) \|_{\infty} \to 0$ as $t \to
    +\infty$. \\[-0.3 cm]
  \item[\textbf{(2)}:] If $\phi(x) \gneqq U(|x|,\alpha)$ for some $\alpha>0$,
    then $u(x, t ; \phi) $ must blow up in finite time, i.e.  there is
    $T_{\phi} \in (0,\infty)$ such that $\lim_{t \to T_{\phi}}
    \|u(x,t;\phi)\|_{\infty}=+\infty$.
  \end{itemize}
\end{thm}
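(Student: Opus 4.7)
My plan is to treat the two statements via the parabolic comparison principle applied to the mild solutions whose existence is established in Section~\ref{sec:local-existence}, together with the structural fact that, in the range $2^\ast \le l_0 < \sigma^\ast$, the regular radial ground states have the \emph{crossing property}: for $0<\alpha_1\ne\alpha_2$, the G.S. $U(\cdot,\alpha_1)$ and $U(\cdot,\alpha_2)$ must cross in $(0,+\infty)$. Hence the one-parameter family $\{U(\cdot,\alpha')\}_{\alpha'>0}$ sweeps continuously from the null solution (as $\alpha'\to 0^+$) up to $U(\cdot,\alpha)$, without any two close leaves being ordered. Hypothesis $\boldsymbol{F0}$ provides monotonicity of $f$ in $u$, which is exactly what the parabolic comparison principle requires.

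For part (1), the first step is to compare $u(\cdot,t;\phi)$ with the stationary supersolution $U(|\cdot|,\alpha)$: since $\phi\lneqq U(|\cdot|,\alpha)$, one obtains $u(\cdot,t;\phi)\le U(|\cdot|,\alpha)$ in the interval of existence, hence global existence and a uniform bound, and the strong maximum principle upgrades this to strict inequality for $t>0$. To turn boundedness into decay, a parabolic interior regularity estimate gives compactness of the trajectory in $C^2_{loc}(\RR^n\setminus\{0\})$, so $\omega(\phi)$ is well defined, and a LaSalle-type argument (or standard $\omega$-limit techniques based on radial symmetry and zero numbers) identifies it with a set of nonnegative radial steady states dominated by $U(|\cdot|,\alpha)$. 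The crossing property then forces $\omega(\phi)=\{0\}$: any nontrivial G.S. $U(\cdot,\alpha')$ with $\alpha'\in(0,\alpha]$ would cross $U(\cdot,\alpha)$ at some point, contradicting the pointwise domination. Uniform decay $\|u(\cdot,t;\phi)\|_\infty\to 0$ follows by combining $C_{loc}$ convergence on compacta with the tail bound $u\le U(\cdot,\alpha)=O(|x|^{-m_0})$ at infinity.

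For part (2), comparison against the stationary subsolution $U(|\cdot|,\alpha)$ gives $u(\cdot,t;\phi)\ge U(|\cdot|,\alpha)$ in the interval of existence, and the strong maximum principle provides $t_0>0$, $\eta>0$, and a ball $B_R$ with $u(\cdot,t_0;\phi)\ge(1+\eta)U(|\cdot|,\alpha)$ on $B_R$. Homogeneity of $f(u,r)=K_0 r^{\delta_0}u^{q-1}$ makes $\underline u(x,t):=\lambda(t)U(|x|,\alpha)$ a natural self-similar subsolution: using $-\Delta U=f(U,|x|)$, a direct computation yields
\begin{equation*}
\partial_t\underline u-\Delta\underline u-f(\underline u,|x|)=\lambda'(t)\,U(|x|,\alpha)-\bigl(\lambda(t)^{q-1}-\lambda(t)\bigr)\,f(U(|x|,\alpha),|x|),
\end{equation*}
so picking $\lambda$ to satisfy the scalar ODE $\lambda'=c\bigl(\lambda^{q-1}-\lambda\bigr)$ with $c:=\inf_{B_R} f(U,\cdot)/U>0$ produces a bona fide subsolution on $B_R\times[t_0,+\infty)$. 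Since $q>2$ and $\lambda(t_0)=1+\eta>1$, this ODE blows up in finite time; a boundary comparison on $\partial B_R$ (where $u\ge U\ge\underline u/\lambda$) closes the argument via the comparison principle, forcing $u(\cdot,t;\phi)$ to blow up in finite time.

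The main obstacle in part (1) is ruling out nontrivial steady states from $\omega(\phi)$; this is precisely where $l_0<\sigma^\ast$ is indispensable, since for $l_0\ge\sigma^\ast$ the regular G.S. become pointwise ordered and nontrivial limit profiles could persist. In part (2), the slow decay $U(\cdot,\alpha)=O(|x|^{-m_0})$ makes the coefficient $f(U,\cdot)/U$ vanish at infinity, which is why the subsolution must be confined to a bounded ball and combined with careful boundary control --- an alternative route is a Kaplan-type test against the principal Dirichlet eigenfunction of $-\Delta$ on a large ball, converting the blow-up of $u$ into the blow-up of a weighted integral.
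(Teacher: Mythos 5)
Your overall strategy---comparison with the stationary profile plus the non-ordering of ground states for $l_0<\sigma^\ast$---is the right spirit, but both parts contain genuine gaps, and the route differs from the one the paper takes (Theorem~\ref{base} is quoted from \cite{GNW1}; what the paper actually proves is the generalization, Theorem~\ref{unstable}). In part (1), the step ``a LaSalle-type argument identifies $\omega(\phi)$ with a set of steady states'' is not available here as stated: on all of $\RR^n$ the natural energy $\int\frac12|\nabla u|^2-F(u,|x|)\,dx$ is infinite for profiles decaying like $|x|^{-m_0}$ (since $m_0<(n-2)/2$ when $l_0\ge 2^\ast$), the trajectory of a general, possibly non-radial, $\phi$ is not monotone in $t$, and zero-number arguments require radial data. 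Moreover, even granting $\omega(\phi)\subset\{\text{steady states below } U(\cdot,\alpha)\}$, the crossing of \emph{regular} G.S.\ does not by itself exclude singular steady states from the limit set; one needs that \emph{all} positive steady states have slow decay and that any two slow-decay profiles intersect infinitely often when $l_0<\sigma^\ast$ (Proposition~\ref{super} and Remark~\ref{macchina}). The paper avoids the $\omega$-limit issue entirely: it squeezes $\phi$ strictly below the glued strict upper solution $\zeta$ of \eqref{defsub} (two G.S.\ spliced at their first crossing point, Lemma~\ref{gs-}, using the uniform continuity in $\alpha$ from Lemma~\ref{uniforme}), so that $u(\cdot,t;\zeta)$ is \emph{monotone decreasing} in $t$ (Theorem~\ref{thm:theorem-2.4}) and converges to a steady state below $\zeta$, which Lemma~\ref{nosol-} shows can only be the null solution.

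In part (2) the comparison with $\underline u(x,t)=\lambda(t)U(|x|,\alpha)$ on $B_R$ fails at the boundary: you need $\underline u\le u$ on $\partial B_R\times[t_0,\infty)$, but $\underline u=\lambda(t)U(R,\alpha)>U(R,\alpha)$ as soon as $\lambda(t)>1$, while the only information available there is $u\ge U(R,\alpha)$; the inequality ``$u\ge U\ge\underline u/\lambda$'' points the wrong way. (A smaller issue: $c=\inf_{B_R}f(U,\cdot)/U=0$ whenever $\delta_0>0$, so $c$ should be taken over an annulus.) The Kaplan alternative you mention is quantitatively borderline here, since $\int_{B_R}U\varphi_1\,dx\big/\int_{B_R}\varphi_1\,dx$ scales exactly like the eigenvalue threshold $\lambda_1(B_R)^{1/(q-2)}\sim R^{-2/(q-2)}$, so the conclusion hinges on constants you have not controlled. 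The paper's argument sidesteps all boundary and eigenvalue issues: the glued $\psi$ of \eqref{defsub} is a global lower solution on all of $\RR^n$, $u(\cdot,t;\psi)$ is increasing in $t$ by Theorem~\ref{thm:theorem-2.4}, and if it existed globally it would converge to a steady state lying above $\psi$, which Lemma~\ref{nosol-} (again via Remark~\ref{macchina}) shows does not exist; hence blow-up in finite time. I recommend replacing both of your limiting arguments by this monotone-sandwich-plus-nonexistence scheme.
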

This result was extended in \cite{DLL} to potentials $f$ of the form
\eqref{eq:potential-0} where 
\begin{equation}\label{K-k}
k_1(|x|) =k_0(r):=r^{\delta_0}K(r)
\end{equation}
with $K(r)$ varying monotonically between two positive
  constants.  Then, in \cite{YZ} it was extended to $f$ of the form
(\ref{eq:potential-1}) where $k_i(r)=r^{\delta_i}k_i$ and $k_i>0$ is a
constant.  An interesting related topic 
 is the rate of decay of fading solutions and of blow up
(see, e.g. \cite{W,FWY}).

It is worth mentioning that in \cite{W,GNW1} Wang et al.
proved that for $f(u)=u^{q-1}$ the situation  is very different if $q
\ge \sigma^\ast$, i.e. G.S. are stable and weakly asymptotically
stable with the suitable weighted $L^{\infty}$-norm.   This result
still holds true also for $f$ as in (\ref{eq:potential-0}) if
$k_0(r)=r^{\delta}K(r)$, when $K$ is decreasing, uniformly positive
and bounded, and $l_0\ge \sigma^\ast$, where $l_0$ is defined in
\eqref{eq:l0}. The same result holds  for
\eqref{eq:potential-1} when $k_i=1$, for $i=1,2$ and $q_2 > q_1\ge
\sigma^\ast$ (see \cite{YZ}).  The extension of this stability results
to the potentials considered in this paper will be object of future
investigations.   \smallskip

 A first contribution of the present paper is the extension of
Theorem~\ref{base} to a number of non-linearities including
\eqref{eq:potential-0} and \eqref{eq:potential-1}. In fact, we propose a
unifying approach that allows us to consider a wider class of
non-linearities including, e.g. \eqref{eq:potential-2.5} among others.

 As we have already seen, the sub- and supercriticality of
\eqref{parab} in the non-homogenous case, e.g. if the potential $f$ is as in
\eqref{eq:potential-0}, depends on the interplay between the exponent
$q=q_1$ and the asymptotic behaviour of $k=k_1$.  The same happens for the
asymptotic behavior of positive solutions to (\ref{radsta}).
Therefore we define the following parameter, useful to combine the two
effects:
\begin{equation}\label{constML}
  l=l(q,\delta):=2\frac{q+\delta}{2+\delta} \,\,\, \textrm{ and }\,\,\,
  m(l):=\frac{2}{l-2} .
\end{equation}
If $f(u,r)=r^{\delta}u^{q-1}$ as in the Wang case, we have a
subcritical behavior for $l<2^*$ and supercritical behavior for
$l>2^*$; the same happens with the other critical parameters defined in
(\ref{constant}).

 We stress that singular and slow decay solutions
$U(r)$ of Equation \eqref{radsta} behave as $\sim P_1 r^{-m(l)}$ as $r \to 0$
and as $r \to +\infty$ respectively (where $P_1$ is a computable
constant).

Using different values of $l$ we can allow two different
behaviors for singular and slow decay solutions, namely:
Denote by $l_u$ and $m(l_u)$ the parameters ruling the asymptotic
  behavior of singular solutions $U(r)$, i.e. $U(r) \sim r^{-m(l_u)}$
  as $r \to 0$; similarly, $l_s$ and $m(l_s)$ are the parameters ruling the
  asymptotic behavior of slow decay solutions $V(r)$, i.e. $V(r) \sim
  r^{-m(l_s)}$ as $r \to +\infty$.

 This will allow
us also to consider Matukuma potentials (see below, and see also
\cite{Yanagida-1996}):  Thus, e.g. in the case
\eqref{eq:potential-0} with $k_1=k$ as in \eqref{asintotico}, we have
\begin{equation}\label{esempioM}
  f \textrm{ as in } \eqref{eq:potential-0}, \textrm{ then }
  \left\{ \begin{array}{l} l_u= 2\frac{q+\delta}{2+\delta}\\[0.4 em]
      m(l_u)= \frac{2+\delta}{q-2}
    \end{array} \right. \quad \textrm{ and }\quad
   \left\{ \begin{array}{l} l_s= 2\frac{q+\eta}{2+\eta} \\[0.4 em]
      m(l_s)= \frac{2+\eta}{q-2}
    \end{array} \right.
\end{equation}
 Analogously, in the cases \eqref{eq:potential-1} and \eqref{eq:potential-2.5}
 we have, respectively, that
\begin{equation}\label{esempioMbis}
 \begin{array}{lll}
    f \textrm{ as in } \eqref{eq:potential-1}, \textrm{ then}  \!\!& \!\!  l_u\!=\! \max\big\{
    2\frac{q_i+\delta_i}{2+\delta_i} \mid  i=1,2 \big\} , \!\!
    &\!  l_s\! =\! \min\big\{2\frac{q_i+\eta_i}{2+\eta_i} \mid
    i=1,2 \big\}\!\!\! \\[0.5 em]
    f \textrm{ as in }  \eqref{eq:potential-2.5}, \textrm{ then} \!\! &\!\!   l_u \! =
    2\frac{q_2+\delta}{2+\delta}, \!\! &\!  l_s \! =  2\frac{q_1+\eta}{2+\eta}\!\!\!\\
  \end{array}\!\!\!\!
\end{equation}
and according to (\ref{constML}) we also obtain $m(l_u)=\frac{2}{l_u-2}$ and
$m(l_s)=\frac{2}{l_s-2}$. \smallskip

 Let us state the following sub and super-criticality conditions
  related to $k_i(r)$, $i=1,2$, that
replace the fact that, for $k_0(r)=r^{\delta_0} K(r)$ in \eqref{K-k}, $K(r)$ is monotone:
\begin{itemize}
\item[$\boldsymbol{H+}$:] $\int_0^r s^{\frac{n-2}{2}q_i}
  \frac{d}{ds}[k_i(s)s^{\frac{n-2}{2}(2^\ast-q_i)}] ds \ge 0$ for any
  $r>0$ and any $i$, strictly for some $i$ and $r>0$.\\[-0.3 cm]
\item[$\boldsymbol{H-}:$] $\int_0^r s^{\frac{n-2}{2}q_i}
  \frac{d}{ds}[k_i(s)s^{\frac{n-2}{2}(2^\ast-q_i)}] ds \le 0$ for any
  $r>0$ and any $i$, strictly for some $i$ and $r>0$.
\end{itemize}
We emphasize that if $f$ is either of type \eqref{eq:potential-0},
\eqref{eq:potential-1} or \eqref{eq:potential-2.5} if $\boldsymbol{H+}$
holds then regular solutions of (\ref{radsta}) are crossing while, if
$\boldsymbol{H-}$ is verified, then they are G.S with slow decay (see
\cite{Fcamq}). \smallskip

Now we can state the following:
\begin{prop}\label{first}
  Assume that $f$ is either of the form (\ref{eq:potential-0}),
  (\ref{eq:potential-1}), (\ref{eq:potential-2.5}) and satisfies
  $\boldsymbol{H-}$.  Further, assume $ l_u \ge 2^\ast$, and $ 2^\ast
  \le l_s < \sigma^\ast$.  Then all the regular solutions
  $U(r,\alpha)$ of (\ref{radsta}) are G.S. with s.d., and there is at
  least a S.G.S. with slow decay $U(r,\infty)$. Moreover if $0<
  \alpha_1< \alpha_2 \le \infty$ for any $M>0$ there is
  $R=R(\alpha_2,\alpha_1)>M$ such that $U(R,\alpha_2)=U(R,\alpha_1)$
  and $\frac{\partial }{\partial r}U(R,\alpha_2)-\frac{\partial
  }{\partial r}U(R,\alpha_1)<0$.
\end{prop}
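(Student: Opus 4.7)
The plan is to reduce everything to a phase-plane analysis in Fowler-type coordinates near infinity and to use a compactness argument for the construction of the S.G.S. Introduce the change of variables
\begin{equation*}
x_{l_s}(t) := r^{m(l_s)} U(r), \qquad y_{l_s}(t) := r^{m(l_s)+1} U'(r), \qquad t=\ln r,
\end{equation*}
which converts \eqref{radsta} into an \emph{asymptotically autonomous} planar system as $t\to+\infty$ (a genuinely autonomous one in the pure power case $f(u,r)=K_0 r^\delta u^{q-1}$). The slow decay singular solution $P_1 r^{-m(l_s)}$ becomes a critical point $P_\infty=(P_1,-m(l_s)P_1)$ of the limit system, and the hypothesis $l_s<\sigma^\ast$ is precisely what forces the linearization at $P_\infty$ to have a complex conjugate pair of eigenvalues with negative real part; $P_\infty$ is thus a stable focus of the limit system at $+\infty$.

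For conclusion (1), I would invoke the structural results of \cite{Fcamq} which, under $\boldsymbol{H-}$ together with $2^\ast\le l_s<\sigma^\ast$ and $l_u\ge 2^\ast$, rule out both the existence of regular crossing solutions and the existence of regular G.S.\ with fast decay: one then shows (via a standard energy/Pohozaev identity adapted to the asymptotically autonomous setting) that the trajectory emanating from the regular equilibrium in the $(x_{l_s},y_{l_s})$-plane cannot leave a bounded neighbourhood of $P_\infty$ and must therefore be trapped by the focus. Translating back, every $U(r,\alpha)$ is a G.S.\ with slow decay, converging to $P_1 r^{-m(l_s)}$ as $r\to+\infty$.

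For conclusion (2), I would construct the S.G.S.\ as a limit of regular solutions. Take a sequence $\alpha_n\to+\infty$; since, by conclusion (1), the $U(\cdot,\alpha_n)$ are all G.S.\ with slow decay and they enjoy monotonicity/crossing structure coming from $\boldsymbol{H-}$, standard Arzelà-Ascoli bounds on compact subsets of $(0,\infty)$ yield a subsequential limit $U(r,\infty)$ which solves \eqref{radsta} on $(0,\infty)$ and is positive. The condition $l_u\ge 2^\ast$ together with $\boldsymbol{F0}$ then forces $U(r,\infty)\sim P_1 r^{-m(l_u)}$ as $r\to 0$ (equivalently, the trajectory lands on the unique unstable manifold of the corresponding critical point near $t=-\infty$ in the Fowler coordinates associated with $l_u$), so $U(r,\infty)$ is genuinely singular. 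Slow decay at $+\infty$ follows from the focus property at $P_\infty$: the limit trajectory cannot converge to the fast-decay equilibrium because the focus captures every nearby trajectory.

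For conclusion (3), fix $0<\alpha_1<\alpha_2\le\infty$. The trajectories $\gamma_i$ in the $(x_{l_s},y_{l_s})$-plane corresponding to $U(\cdot,\alpha_i)$ are distinct by uniqueness, both converge to the focus $P_\infty$, and both spiral around it infinitely often. Two distinct trajectories spiralling into the same focus must intersect transversally infinitely many times: at each intersection time $t_k\to+\infty$ one has $x_{l_s}(t_k,\alpha_1)=x_{l_s}(t_k,\alpha_2)$, hence $U(R_k,\alpha_1)=U(R_k,\alpha_2)$ with $R_k=e^{t_k}\to+\infty$. Half of these crossings have the tangent $y$-component of $\gamma_2$ strictly less than that of $\gamma_1$ (this is exactly the half corresponding to $\gamma_2$ crossing $\gamma_1$ from above downwards); choosing any such $R_k>M$ yields $\frac{\partial U}{\partial r}(R,\alpha_2)-\frac{\partial U}{\partial r}(R,\alpha_1)<0$, as $y_{l_s}=r^{m(l_s)+1}U'$ and both prefactors coincide at $r=R_k$. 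The main obstacle I anticipate is to justify rigorously the spiraling/focus behaviour in the non-homogeneous cases \eqref{eq:potential-1} and \eqref{eq:potential-2.5}, where the Fowler-transformed system is only asymptotically autonomous: one must ensure that the time-dependent perturbation decays at a rate which does not destroy the focus structure of the limit system, e.g.\ via a Hartman-type perturbation lemma, and verify the intersection count is preserved in the perturbed flow.
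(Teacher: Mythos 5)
Your overall strategy (Fowler transformation, phase-plane analysis near the slow-decay critical point, reading off the crossings of $U(\cdot,\alpha_1)$ and $U(\cdot,\alpha_2)$ from the behaviour of the transformed trajectories) is exactly the route the paper takes: Proposition \ref{first} is deduced from Proposition \ref{super} (all regular solutions are G.S.\ with s.d., existence of the S.G.S.\ via the unstable manifold of $\boldsymbol{P^{-\infty}}$) together with Remark \ref{macchina} (two slow decay solutions cross infinitely often). However, there are two genuine gaps in your argument for the crossing property.

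First, the hypothesis is $2^\ast\le l_s<\sigma^\ast$, and your focus argument silently excludes the endpoint $l_s=2^\ast$. At $l_s=2^\ast$ the eigenvalues of the linearization at $\boldsymbol{P^{+\infty}}$ are purely imaginary (see Remark \ref{criticalP}: the point is a \emph{center} of the limit system, not a stable focus), so trajectories need not converge to $\boldsymbol{P^{+\infty}}$ at all; generically they converge to one of the closed level curves $K_+(b)$ of the Pohozaev energy $H$, and different solutions may converge to different level curves. The paper treats this case separately (proof of Remark \ref{macchina}): if the two limiting levels $\bar b\ge\tilde b$ are distinct, the sign changes of $\bar y_1-\tilde y_1$ come from the fact that $K_+(\tilde b)$ is nested strictly inside $K_+(\bar b)$, so the outer trajectory periodically overtakes and falls behind the inner one in the $y_1$ coordinate; only when both converge to the critical point does one linearize and run a Pr\"ufer-angle argument, and there the needed rotation comes from $B>0$ alone, not from a negative real part. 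Your Hartman-type perturbation lemma cannot be applied as stated in this degenerate case.

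Second, your justification ``two distinct trajectories spiralling into the same focus must intersect transversally infinitely many times'' is false for the (asymptotically) autonomous planar system: distinct orbits are disjoint. What you actually need is not an intersection of the orbits as curves, but the vanishing, at a common time $s$, of the \emph{first component} of the difference vector $h(s)=y_1(s;\alpha_2)-y_1(s;\alpha_1)$. That is a statement about the rotation of the difference vector, which is what the paper proves: $h$ satisfies the linearized equation \eqref{lin}--\eqref{lin2} up to an $o(h)$ perturbation, and the polar-coordinate estimate \eqref{polar}--\eqref{polar2} shows $\dot\theta(s)<-\ep<0$, hence $\theta\to-\infty$ and $h$ changes sign indefinitely while $\rho>0$ (which also gives you the strict inequality $U'(R,\alpha_2)<U'(R,\alpha_1)$ at the down-crossings, since $h=0$ and $\dot h\neq 0$ cannot fail simultaneously). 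Your conclusion is correct, but the geometric picture you invoke does not prove it. Finally, your Arzel\`a--Ascoli construction of the S.G.S.\ is a legitimate alternative to the paper's invariant-manifold argument, but as written it asserts rather than proves that the limit is nontrivial and genuinely singular with the rate $r^{-m(l_u)}$; the paper obtains both directly from the one-dimensional unstable manifold of $(\boldsymbol{P^{-\infty}},0)$ for $l_u\ge 2^\ast$.
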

 This result is a direct consequence of Proposition~\ref{super} and
Remark~\ref{macchina} below.
In fact, the intersection property of G.S. is a
secondary contribution of this paper.
%
\smallskip

 In this setting we can
extend Theorem~\ref{base} as follows
\begin{thm}\label{second}
  Assume that the assumptions of Proposition~\ref{first} are verified, then 
the same conclusion as in Theorem~\ref{base} still holds true.
\end{thm}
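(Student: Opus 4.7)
The plan is to follow the scheme developed by Gui--Ni--Wang for Theorem~\ref{base}, using Proposition~\ref{first} as the substitute for the specific monotonicity argument available when $f=K_0 r^{\delta_0}u^{q-1}$. The engine of the proof is the comparison principle for $C_B$-mild solutions (Theorem~\ref{teo:CB-CS-mild-sol}) together with the intersection property of the family $\{U(\cdot,\alpha)\}_{\alpha\in(0,\infty]}$ provided by Proposition~\ref{first}.

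\textbf{Case (1) (decay to $0$).} Since $\phi\lneqq U(\cdot,\alpha)$ pointwise and $U(\cdot,\alpha)$ is a (stationary, hence eternal) supersolution to \eqref{parab}, the comparison principle gives
$0\le u(x,t;\phi)\le U(|x|,\alpha)$ for all $t\ge 0$ for which $u$ is defined; consequently $u$ exists globally and is uniformly bounded. Parabolic regularity yields precompactness of the orbit $\{u(\cdot,t;\phi):t\ge 1\}$, so the $\omega$-limit set $\Omega(\phi)$ is a non-empty set of non-negative stationary solutions $W$ of \eqref{laplace} with $W\le U(\cdot,\alpha)$. It remains to show $\Omega(\phi)=\{0\}$. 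Any nontrivial radial $W\in\Omega(\phi)$ either coincides with $U(\cdot,\alpha')$ for some $\alpha'\in(0,\alpha]$ or is a singular solution with $W(r)\to\infty$ as $r\to 0$; in both cases Proposition~\ref{first} (applied to the pair $\alpha',\alpha$, or $\infty,\alpha$) produces arbitrarily large $R$ at which $W(R)=U(R,\alpha)$ together with $W'(R)<U'(R,\alpha)$, so that $W(r)>U(r,\alpha)$ immediately past $R$, contradicting $W\le U(\cdot,\alpha)$. The non-radial case is ruled out by symmetrization and the strong maximum principle, since strict inequality $\phi\lneqq U(\cdot,\alpha)$ propagates to $u(\cdot,t)<U(\cdot,\alpha)$ strictly for $t>0$ and the decay at infinity produced by $U(\cdot,\alpha)$ forces radial symmetry of any stationary limit bounded above by it. Hence $W\equiv 0$ and $\|u(\cdot,t;\phi)\|_\infty\to 0$.

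\textbf{Case (2) (blow-up).} Suppose for contradiction that $u(\cdot,t;\phi)$ exists globally. Comparison from below gives $u(x,t;\phi)\ge U(|x|,\alpha)$ for all $t\ge 0$, and the strong maximum principle upgrades $\phi\gneqq U(\cdot,\alpha)$ to $u(\cdot,t)>U(\cdot,\alpha)$ strictly. If $u$ were in addition uniformly bounded in time, the $\omega$-limit argument of Case (1) would produce a bounded positive stationary solution $W\gneqq U(\cdot,\alpha)$; radializing and invoking Proposition~\ref{first} again with the pair $\alpha''>\alpha$ (or with the S.G.S.) yields a radius $R$ at which $W(R)<U(R,\alpha)$, a contradiction. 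Therefore $\|u(\cdot,t;\phi)\|_\infty$ is unbounded; combining this with the super-solution construction based on $U(\cdot,\alpha)$ and the standard blow-up criterion for supercritical semilinear heat equations (which adapts to the present $C_B$-mild class since $f$ satisfies $\boldsymbol{F0}$) one upgrades unboundedness to finite-time blow-up.

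\textbf{Main obstacle.} The delicate step is the classification of $\Omega(\phi)$: extending the Gui--Ni--Wang reduction to radial $\omega$-limits and to the much wider class of potentials \eqref{eq:potential-0}--\eqref{eq:potential-2.5} requires that \emph{every} candidate stationary solution bounded by (resp.\ bounding) $U(\cdot,\alpha)$ be eliminated via intersection. Proposition~\ref{first} handles regular G.S.\ against regular G.S.\ and the slow-decay S.G.S.\ against regular G.S.\ (giving infinitely many crossings beyond any prescribed $M$), which is exactly what the $\omega$-limit argument demands. The secondary difficulty is the parabolic regularity/symmetrization needed to pass from pointwise comparison to uniform convergence of the orbit; this is handled by the decay information encoded in the $C_B$- or $C_S$-mild class and by the Lipschitz hypotheses on $f$ in $\boldsymbol{F0}$, so that the arguments of \cite{GNW1,W} apply mutatis mutandis.
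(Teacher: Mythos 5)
There is a genuine gap in both cases, and it sits exactly where the paper has to work hardest. In Case (1) you assert that parabolic regularity makes the orbit precompact and that the $\omega$-limit set $\Omega(\phi)$ consists of \emph{stationary} solutions. On $\RR^n$ this is not automatic: there is no Lyapunov functional available for these solutions (the energy is in general infinite), so a bounded orbit may have non-stationary entire solutions in its $\omega$-limit set; this is precisely the difficulty addressed in \cite{PY1,PY2,PY3}, and it is why the paper does not argue with $\Omega(\phi)$ for a general $\phi$. Instead, Theorem~\ref{unstable} (of which Theorem~\ref{second} is a corollary) first builds the radial barriers $\zeta$ and $\psi$ of \eqref{defsub} by gluing $U(\cdot,\alpha_2)$ and $U(\cdot,\alpha_1)$ at their \emph{first} crossing point $Z(\alpha_2,\alpha_1)$ (Lemma~\ref{gs-}, which is where the intersection property of Proposition~\ref{first} actually enters); these are elliptic upper/lower solutions, so by Theorem~\ref{thm:theorem-2.4} the solutions emanating from them are monotone in $t$, and only for such monotone orbits does Remark~\ref{limit} guarantee convergence to a stationary (and automatically radial) solution. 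A general $\phi\lneqq U(\cdot,\alpha)$ is then squeezed under $\zeta$ using the strong maximum principle and the uniform continuity in $\alpha$ of Lemma~\ref{uniforme}; your symmetrization remark is not needed and would be hard to justify here. Note also that eliminating the stationary limit uses Lemma~\ref{nosol-}, i.e.\ the fact that \emph{any} two slow-decay solutions oscillate around each other (Remark~\ref{macchina}), not only the regular ones covered by Proposition~\ref{first}; this is where the hypothesis $l_s<\sigma^\ast$ is consumed.

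In Case (2) the gap is worse: "unbounded in time" does not upgrade to "blows up in finite time" by a standard criterion. Grow-up (i.e.\ $\|u(\cdot,t)\|_\infty\to\infty$ as $t\to+\infty$) is a genuine phenomenon for supercritical problems --- it occurs for instance when $q\ge\sigma^\ast$, where G.S.\ are stable --- and the eigenfunction or energy blow-up criteria you allude to do not apply off the shelf to this class of non-homogeneous $f$ and of $C_B$/$C_S$-mild solutions. The paper's route is again through monotonicity: $u(\cdot,t;\psi)$ is nondecreasing in $t$, so if $T_\psi=+\infty$ it would converge (Remark~\ref{limit}) to a stationary solution lying above $\psi$, and Lemma~\ref{nosol-} says no such solution exists; hence $T_\psi<+\infty$ and the finite-time blow-up is obtained directly, with no separate "unbounded $\Rightarrow$ finite-time" step. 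In short, your proposal correctly identifies the intersection property of Proposition~\ref{first} as the engine, but omits the construction of the glued monotone barriers \eqref{defsub} and the non-existence Lemma~\ref{nosol-}, which are the two ingredients that make the $\omega$-limit and blow-up arguments actually close.
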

The above result is obtained as a corollary of Theorem~\ref{unstable} below,
which is somewhat more general.

 We highlight the fact that when $f$ is of type
(\ref{eq:potential-0}), Theorem~\ref{second} generalizes the result of
\cite{DLL} to the case where $k_1(r)$ is not monotone decreasing and
may even be increasing in some cases. E.g., let   $f$ be of type
 (\ref{eq:potential-0}) with $k(r)= k_1(r)=1+r^{a}$;
assume $q \ge 2^\ast$ and $a \ge 2^\ast(q-2)-2$,  so that from
 \eqref{constML} we have $l_u=q$ and $l_s=2(q+a)/(2+a) \ge 2^*$, 
then Theorem~\ref{second} applies directly to this situation.
 \smallskip

 Notice that Theorem~\ref{second}
requires a weaker condition on $l_u$ than on $l_s$.  Hence,
Theorem~\ref{second} applies also to the case (\ref{eq:potential-0})
even for $q\ge \sigma^\ast$, with the condition that $a \in
\big(\frac{2}{\sigma^\ast-2}(q-\sigma^\ast),
\frac{n-2}{2}(q-2^\ast)\big]$, while from \cite{W} and \cite{DLL} we
know that in this case, if $k(r)$ is a constant or a decreasing
function varying between two positive values, G.S. are stable, so we
are in the opposite situation.

Also, we emphasize that Theorem~\ref{second} extends
 \cite[Theorem~1]{DLL}
also to Matukuma type potential (see, e.g.
\cite{Yanagida-1996} for more details), which are a model in astrophysics, i.e.  to $f$
of the form (\ref{eq:potential-0}) where $q \in [2^\ast,\sigma^\ast)$
and $k(r)=1/(1+r^a)$, where $a \in (0,2-\sigma^\ast(q-2))$.

When $f$ is of type (\ref{eq:potential-1}) we extend the result in
\cite{YZ} to the case where $k_i(r)$, $i=1,2$ are $r$-dependent functions, and
we can deal with a generic family of non-linearities including
(\ref{eq:potential-1}). \smallskip

Let us go back again to the case of $f(u,r)=f(u)=u^{q-1}$: The singular solution
$\phi_s(x):=P_1 |x|^{-2/(q-2)}$ seems to play a key role in
determining the threshold between solutions converging to zero and
solutions blowing up in finite time.

 In \cite{Ni} Ni shows that if
$2_\ast<q<\sigma^*$ and $\phi(x)<\phi_s(x)$, then $u(x,t;\phi)$
converges to the null solution as $t \to +\infty$.  Let $\la_1$ denote
the first eigenvalue of the Laplace operator in the ball of radius
$r=1$; if $\liminf_{|x| \to +\infty}
\phi(x)|x|^{-2/(q-2)}>(\la_1)^{1/(q-2)}$ then $u(x,t;\phi)$ blows up
in finite time.

 Wang in \cite{W} shows that if $q \ge \sigma^\ast$ and
$\liminf_{|x| \to +\infty} \phi(x)|x|^{-2/(q-2)}>P_1$ then
$u(x,t;\phi)$ blows up in finite time. Note that this result is
optimal since, for $q \ge \sigma^\ast$, there are uncountably many
G.S. with s.d. asymptotic to $P_1|x|^{-2/(q-2)}$ as $|x| \to +\infty$.
On the other hand, in \cite[Theorem~0.2, point (ii)]{W}, Wang proved
the following.
\begin{thm}\label{wangslow}
  Consider $f(u,r)= f(u)=u^{q-1}$, where $2_\ast<q<\sigma^\ast$; then for
  any $\beta>0$ there is a radial decreasing upper solution of
  (\ref{laplace}) $\chi_{\beta}(x)$ such that $\chi_{\beta}(0)=\beta$
  and $\chi_{\beta}(x)|x|^{m(q)} \to P_1:=[m(q)(n-2-m(q))]^{1/(q-2)}$
  as $|x| \to +\infty$.  Moreover $\lim_{t \to +\infty}
  \|u(x,t;\chi_{\beta})(1+|x|^{\nu})\|_{\infty}=0$ for any
  $0<\nu<m(q)$.
\end{thm}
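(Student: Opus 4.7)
\emph{Proof plan.} The strategy is to produce $\chi_\beta$ by an explicit rational ansatz, exploit monotone parabolic comparison to pass to a long-time stationary limit, identify that limit as zero, and finally upgrade pointwise convergence to the required weighted form.

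First, with $m=m(q)$ and $a:=(P_1/\beta)^{1/m}$, I would set
\begin{equation*}
\chi_\beta(r) := P_1\bigl(r^{2}+a^{2}\bigr)^{-m/2},\qquad r=|x|,
\end{equation*}
which is manifestly radial, strictly decreasing, satisfies $\chi_\beta(0)=\beta$, and obeys $\chi_\beta(r)\,r^{m}\to P_1$ as $r\to+\infty$. A short direct computation, using the identities $m(q-1)=m+2$ and $P_1^{q-2}=m(n-2-m)$, makes the leading-order terms cancel and gives
\begin{equation*}
\Delta\chi_\beta+\chi_\beta^{q-1}=-\,m(m+2)\,P_1\,a^{2}\,(r^{2}+a^{2})^{-m/2-2}<0,
\end{equation*}
so $\chi_\beta$ is a \emph{strict} stationary supersolution to \eqref{laplace}.

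Next, the comparison principle for $C_B$-mild solutions (Theorem~\ref{teo:CB-CS-mild-sol}) yields $0\le u(x,t;\chi_\beta)\le\chi_\beta(x)$, and since $\chi_\beta$ is a time-independent supersolution, $t\mapsto u(x,t;\chi_\beta)$ is non-increasing at each $x$. Hence the pointwise limit $u_\infty(x):=\lim_{t\to+\infty}u(x,t;\chi_\beta)$ exists, is radial, bounded, nonnegative, and is a stationary solution of \eqref{laplace} with $u_\infty\le\chi_\beta$. To show $u_\infty\equiv 0$: since $u_\infty(0)\le\beta<+\infty$, the strong maximum principle forces $u_\infty$ to be either $0$ or a regular radial ground state. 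If $q<2^\ast$ no ground state exists and we are finished. If $2^\ast\le q<\sigma^\ast$, I would combine the strict inequality $\chi_\beta(r)<\phi_s(r)=P_1 r^{-m}$ (immediate from $(r^{2}+a^{2})^{-m/2}<r^{-m}$ for $a>0$) with the known oscillation property in this regime: every regular slow-decay ground state $U(r,\alpha)$ intersects $\phi_s$ and attains values above it at some $r$, and therefore cannot satisfy $U\le\chi_\beta<\phi_s$ everywhere.

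Finally, for $0<\nu<m$ and $\varepsilon>0$, I choose $R$ so that $\chi_\beta(x)(1+|x|^{\nu})<\varepsilon$ whenever $|x|\ge R$; since $u(x,t;\chi_\beta)\le\chi_\beta(x)$, this controls the weighted tails uniformly in $t$. On $\overline{B_R}$ the monotone decrease $u(x,t;\chi_\beta)\downarrow 0$ of continuous functions to a continuous limit is uniform by Dini's theorem, so $\sup_{|x|\le R} u(x,t;\chi_\beta)(1+R^{\nu})<\varepsilon$ for $t$ large, and combining the two estimates yields $\|u(\cdot,t;\chi_\beta)(1+|\cdot|^{\nu})\|_\infty\to 0$. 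The main obstacle is the third step in the range $2^\ast\le q<\sigma^\ast$: nontrivial ground states exist, so the identification $u_\infty\equiv 0$ rests on the fine crossing/oscillation structure of radial stationary solutions relative to the singular solution $\phi_s$; in the subcritical range the argument is essentially immediate.
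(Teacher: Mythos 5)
Your proof is correct, but it follows a genuinely different route from the paper --- which in fact does not reprove Theorem \ref{wangslow} at all: the statement is quoted from Wang \cite[Theorem~0.2(ii)]{W}, and what the paper actually proves is the generalization Theorem \ref{unstableslow} (hence Theorem \ref{wangslowLM}). There the upper solution is not given by a closed formula but is manufactured by gluing two exact stationary solutions, a regular solution $U(|x|,d(\tau))$ for $|x|\le e^{\tau}$ and the singular slow-decay solution $U(|x|,\infty)$ for $|x|\ge e^{\tau}$, the matching radius being chosen via the phase-plane position of $W^u(\tau;l_u)$ relative to the trajectory of the S.G.S.\ (Lemmas \ref{picture} and \ref{picture2}), so that the radial derivative jumps downward and the glued function is an upper solution; the identification of the $\omega$-limit with $0$ then rests on the nonexistence of stationary solutions trapped below the barrier (Lemmas \ref{nosol-}, \ref{nosol+}, i.e.\ Remark \ref{macchina}). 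Your explicit ansatz $P_1(r^2+a^2)^{-m/2}$ --- whose supersolution inequality checks out exactly as you compute it --- buys a completely elementary, self-contained barrier, but only for the autonomous pure power $f(u)=u^{q-1}$, since it hinges on the identities $m(q-1)=m+2$ and $P_1^{q-2}=m(n-2-m)$; the paper's gluing works for the whole non-autonomous class and is why the machinery of Section \ref{steadyasympt} is needed there. The two arguments converge at the one genuinely delicate step: ruling out a nontrivial stationary limit below the barrier, which both do via the crossing/oscillation of ground states relative to $\phi_s=P_1r^{-m}$ when $2^\ast\le q<\sigma^\ast$. Your appeal to ``the known oscillation property'' is exactly Remark \ref{macchina} for $q>2^\ast$; at $q=2^\ast$ regular solutions have \emph{fast} decay, so that remark does not literally apply and you should instead invoke the homoclinic-loop geometry (the loop $K_+(0)$ encircles $\boldsymbol{P}$, so every ground-state trajectory reaches $y_1>P_1$, i.e.\ exceeds $\phi_s$ somewhere) --- worth a sentence in a final write-up. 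The tail-plus-Dini upgrade to the weighted norm is fine.
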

 In fact, the result is proved for a slightly more general potential
$f(u,r)=r^{\delta}u^{q-1}$, with a proper $\delta$.  These results seems to indicate
$r^{-2/(q-2)}$ (or more in general the decay rate of slow decay
solutions of (\ref{laplace})) as the optimal decay rate for having
solutions which are continuable for any $t\in \RR$, see the
introduction of \cite{GNW2} for a detailed discussion on such a  topic.

From now till the end of this section we consider $f$ as follows:
\begin{itemize}
\item[\textbf{(i)}:] $f$ as in (\ref{eq:potential-0}) and $k_1$ satisfies
  (\ref{asintotico}) with
  $l_u,l_s>2_\ast$;\\[-0.3 cm]
\item[\textbf{(ii)}:] $f$ as in (\ref{eq:potential-1}) and $k_i$, $i=1,2$,
  satisfy (\ref{asintotico}) with   $l_u,l_s>2_\ast$ ;\\[-0.3 cm]
\item[\textbf{(iii)}:] $f$ as in (\ref{eq:potential-2.5}) and $k$ satisfies
  (\ref{asintotico}) with $l_u,l_s>2_\ast$;
\end{itemize}
As a a consequence of Theorem~\ref{unstableslow} we generalize this
result to present setting, i.e.
\begin{thm}\label{wangslowLM}
  Assume $f$ either of the form $\mathbf{(i)}$, $\mathbf{(ii)}$, or
  $\mathbf{(iii)}$.  Assume either $\boldsymbol{H+}$ with $l_u, l_s
  \in (2_\ast,2^\ast]$ or $\boldsymbol{H-}$ with $l_u\ge 2^\ast$, $l_s
  \in [2^\ast,\sigma^\ast)$. Then we have the same conclusion as in
  Theorem \ref{wangslow} but $m(q)$ is replaced by $m(l_s)$ and $P_1$
  is replaced by the computable constant $P_1^{+\infty}$ (e.g.
  $P_1^{+\infty}=[m(l_s)(n-2-m(l_s))]^{1/(q-2)}$,  if $f$ is of type
  $\mathbf{(i)}$).
\end{thm}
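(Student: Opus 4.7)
The plan is to derive Theorem~\ref{wangslowLM} from the more general Theorem~\ref{unstableslow}, mirroring the route used by Wang for the homogeneous case. The essential ingredient is to construct, for each $\beta>0$, a family of radial monotone stationary super-solutions $\chi_\beta$ with the correct tail $\chi_\beta(x)|x|^{m(l_s)}\to P_1^{+\infty}$, and then to propagate the decay into the evolution via parabolic comparison.

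Step 1 (Construction of $\chi_\beta$). Using the asymptotic analysis of (\ref{radsta}) developed in Section~\ref{steadyasympt}, all positive slow-decay solutions behave like $P_1^{+\infty} r^{-m(l_s)}$ at infinity, with $P_1^{+\infty}$ determined by inserting this ansatz into (\ref{radsta}) and using (\ref{asintotico}); in case $\mathbf{(i)}$ this gives $P_1^{+\infty}=[m(l_s)(n-2-m(l_s))]^{1/(q-2)}$. Under $\boldsymbol{H-}$ with $l_u\ge 2^\ast$ and $l_s\in[2^\ast,\sigma^\ast)$, Proposition~\ref{first} guarantees that every regular solution $U(\cdot,\alpha)$ is a G.S.\ with slow decay and exactly this tail, so one simply takes $\chi_\beta=U(\cdot,\alpha_\beta)$ with $\alpha_\beta=\beta$. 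Under $\boldsymbol{H+}$ with $l_u,l_s\in(2_\ast,2^\ast]$ regular solutions cross and are not themselves suitable; here I would glue the regular solution with value $\beta$ at the origin to a radial monotone super-solution $Z(r)\sim P_1^{+\infty} r^{-m(l_s)}$ built from the tail analysis, setting $\chi_\beta=\min\{U(\cdot,\alpha_\beta),Z\}$, exploiting that the minimum of two super-solutions is a weak super-solution.

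Step 2 (Parabolic comparison and identification of the limit). Since $\chi_\beta$ is a stationary super-solution of (\ref{parab}), the mapping $t\mapsto u(\cdot,t;\chi_\beta)$ lies in the class of $C_B$- or $C_S$-mild solutions built in Section~\ref{sec:local-existence}, is non-increasing in $t$, and is dominated by $\chi_\beta$. Theorem~\ref{unstableslow} then applies to such initial data and yields convergence of $u(\cdot,t;\chi_\beta)$ to the null solution; equivalently, standard parabolic regularity identifies the monotone limit $u_\infty$ with a stationary solution below $\chi_\beta$, and the structural results of Section~\ref{steadyasympt} (in the stated ranges of $l_u,l_s$, under $\boldsymbol{H+}$ or $\boldsymbol{H-}$) exclude any nontrivial such solution, forcing $u_\infty\equiv 0$.

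Step 3 (Upgrading to the weighted $L^\infty$ norm). Fix $0<\nu<m(l_s)$. For $|x|$ large the bound $u(x,t;\chi_\beta)\le \chi_\beta(x)\sim P_1^{+\infty}|x|^{-m(l_s)}$ gives $u(x,t;\chi_\beta)(1+|x|^\nu)\to 0$ as $|x|\to+\infty$ uniformly in $t\ge 0$; on any ball $\{|x|\le R\}$ the pointwise convergence from Step 2 combined with the uniform upper bound $\chi_\beta$ and interior parabolic estimates yields uniform convergence to zero. Piecing the two regions together delivers the weighted $L^\infty$ statement.

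The main obstacle is Step 1: producing a radial decreasing super-solution with the sharp asymptotic constant $P_1^{+\infty}$, uniformly in $\beta>0$, simultaneously in the three functional forms $\mathbf{(i)}$--$\mathbf{(iii)}$ and in both regimes $\boldsymbol{H\pm}$. The difficulty is that $l_u\ne l_s$ in general (e.g.\ for Matukuma-type $f$), so the candidate profile must respect two distinct power-law regimes, one near the origin and one at infinity, and the gluing has to preserve both radial monotonicity and the super-solution inequality; this is precisely where the hypotheses $\boldsymbol{H+}/\boldsymbol{H-}$ and the quantitative asymptotic results of Section~\ref{steadyasympt} are decisive.
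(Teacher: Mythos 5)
Your overall strategy (build a radial upper solution with the sharp tail $P_1^{+\infty}|x|^{-m(l_s)}$, then use monotone parabolic comparison plus non-existence of intermediate stationary solutions) is the paper's strategy, but Step 1 breaks down in both regimes, and Step 1 is where the whole content lies. In the $\boldsymbol{H-}$ case you take $\chi_\beta=U(\cdot,\alpha_\beta)$, an exact ground state. But then $u(\cdot,t;\chi_\beta)\equiv\chi_\beta$ for all $t$: the evolution starting from a stationary solution is stationary, so it cannot converge to the null solution, and the conclusion of Theorem \ref{wangslow} fails for this choice. The paper instead produces a \emph{strict} upper solution $\zeta$ by gluing two distinct ground states $U(\cdot,\alpha_1)$ and $U(\cdot,\alpha_2)$ ($\alpha_1<\alpha_2$) at their first intersection $Z(\alpha_2,\alpha_1)$, taking the smaller one for $|x|\le Z$ and the larger one for $|x|\ge Z$ (see (\ref{defsub}) and Lemma \ref{gs-}); the existence of this intersection with the right derivative ordering is exactly what Remark \ref{macchina} gives and is the reason the hypothesis $l_s<\sigma^\ast$ appears. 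The same oscillation property then feeds Lemma \ref{nosol-}, which rules out any positive stationary solution below $\zeta$ and forces the monotone decreasing limit to be zero. You gesture at "structural results excluding nontrivial limits" in Step 2, but without the crossing/oscillation mechanism you have neither the strict upper solution nor the non-existence statement.

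In the $\boldsymbol{H+}$ case your candidate $\min\{U(\cdot,\alpha_\beta),Z\}$ does not work either: in this subcritical range every regular solution is crossing (Proposition \ref{sub}), so $U(\cdot,\alpha_\beta)$ vanishes at some finite $R(\alpha_\beta)$ and is the smaller of the two functions for large $|x|$; the minimum therefore loses the slow-decay tail (and positivity) precisely where you need it. The paper's construction (\ref{defsubslow}) is a one-point gluing, not a minimum: it uses the regular solution $U(\cdot,d(\tau))$ only on $|x|\le \eu^{\tau}$ and switches permanently to the S.G.S.\ with slow decay $U(\cdot,\infty)$ on $|x|\ge \eu^{\tau}$, with Lemma \ref{picture} (or Lemma \ref{picture2} when $l_u=l_s=2^\ast$) supplying the derivative inequality $Q_2(\tau)<y^s_2(\tau;l_u)$ at the junction that makes $\chi$ an upper solution. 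Finally, note a circularity in your Step 2: you both claim to derive the result from Theorem \ref{unstableslow} and invoke Theorem \ref{unstableslow} for the initial data you just built; the construction of $\chi$ \emph{is} the proof of Theorem \ref{unstableslow}, and Theorem \ref{wangslowLM} then follows by reading off the tail of $\chi$. Your Step 3 (splitting the weighted norm into a far-field region controlled by the tail of $\chi$ and a compact region controlled by monotone convergence) is fine and matches the paper.
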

We emphasize that, as far as we are aware, this result is new anytime
we consider $f(u,r)$ as in (\ref{eq:potential-0}) but $k(r)\not\equiv
r^{\delta}$, and for (\ref{eq:potential-1}) even for $k_1=k_2=1$. \smallskip

The main contribution of this paper is the following result
(consequence of the slightly more general Theorem~\ref{unstableadd1})
which goes in the opposite direction with respect to Theorem~\ref{wangslow}
(and hence to Theorem~\ref{wangslowLM}),  and shows that the situation is
really delicate.
\begin{thm}\label{main2}
  Assume $f$ either of the form $\mathbf{(i)}$, $\mathbf{(ii)}$, or
  $\mathbf{(iii)}$. Further assume that either $l_u,l_s$ are in
  $(2_\ast,2^\ast]$, and $\boldsymbol{H+}$ holds, or that $l_u,l_s$
  are in $[2^\ast,+\infty)$, and $\boldsymbol{H-}$ holds.  Then there
  are one parameter families of upper and lower radial solutions with 
  fast decay of (\ref{laplace}), denoted by $\zeta_{\tau}(x)$ and
  $\psi_{\tau}(x)$ respectively; hence $\psi_{\tau}(0)=D(\tau)=\zeta_{\tau}(0)>0$,
  $\lim_{|x| \to +\infty} |x|^{n-2} \zeta_{\tau}(x)=L_{\zeta}(\tau)$ and
  $\lim_{|x| \to +\infty} |x|^{n-2} \psi_{\tau}(x)=L_{\psi}(\tau)>0$
  where $L_{\zeta}(\tau)<L_{\psi}(\tau)$. The solution $u(x,t;\zeta)$
  blow up in finite time, while the limit $\lim_{t \to +\infty}
  \|u(x,t;\psi)(1+|x|)^{\nu}\|_{\infty}=0$ for any $0<\nu<n-2$.

  Moreover
  $\|\zeta_{\tau}(x)\|_{\infty}=\|\psi_{\tau}(x)\|_{\infty}=D(\tau)
  \to +\infty$, while $L_{\zeta}(\tau)<L_{\psi}(\tau)\to 0$ as $\tau
  \to -\infty$, while $D(\tau) \to 0$ and
  $L_{\psi}(\tau)>L_{\zeta}(\tau)\to +\infty$ as $\tau \to +\infty$.
\end{thm}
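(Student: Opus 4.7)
The plan is to reduce Theorem~\ref{main2} to the construction of two one-parameter families of explicit radial, distributional sub- and super-solutions of the stationary problem~\eqref{laplace}, both having the common value $D(\tau)$ at the origin and prescribed fast-decay tails $\sim L(\tau)|x|^{2-n}$ at infinity, and then to invoke the parabolic comparison principle from Section~\ref{sec:local-existence} (for the $C_B$-mild setting) to translate the stationary inequalities into the dynamical conclusions. Concretely, $\zeta_\tau$ is built so $\Delta\zeta_\tau + f(\zeta_\tau,|x|)\ge 0$ distributionally; then $u(\cdot,t;\zeta_\tau)$ is non-decreasing in $t$ and, once $\tau$ is chosen so $\zeta_\tau$ dominates an initial datum known to blow up by Theorem~\ref{second}, it blows up in finite time. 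Symmetrically, $\psi_\tau$ satisfies the reverse inequality $\Delta\psi_\tau + f(\psi_\tau,|x|)\le 0$, giving a non-increasing parabolic orbit bounded above by $\psi_\tau(x)\le C_\tau(1+|x|)^{2-n}$; standard linear heat-kernel estimates then propagate this weight-decay to yield $\|u(\cdot,t;\psi_\tau)(1+|x|)^\nu\|_\infty\to 0$ for every $\nu\in(0,n-2)$.

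The radial functions $\zeta_\tau,\psi_\tau$ are built by gluing. The core on $\{|x|\le R(\tau)\}$ is either the regular slow-decay G.S.\ $U(\cdot,\alpha(\tau))$ of~\eqref{radsta} in the $\boldsymbol{H-}$ regime (available by Proposition~\ref{first}, with $\alpha(\tau)=D(\tau)$), or the restriction of a crossing regular solution of~\eqref{radsta} to its positivity interval in the $\boldsymbol{H+}$ regime, possibly composed with an intermediate slow-decay super-solution piece of the kind furnished in the proof of Theorems~\ref{wangslow} and~\ref{wangslowLM}. The tail on $\{|x|\ge R(\tau)\}$ is of the corrected fast-decay form $L(\tau)|x|^{2-n}\bigl(1+\epsilon(\tau)|x|^{-\mu}\bigr)$, with $\mu=\mu(l_s,n)>0$. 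A direct computation from the identity $\Delta r^a=a(a+n-2)r^{a-2}$ yields
\begin{equation*}
\Delta\bigl(r^{2-n}(1+\epsilon r^{-\mu})\bigr) \;=\; \epsilon\,\mu(n-2+\mu)\,r^{-n-\mu},
\end{equation*}
so the sign of the Laplacian in the tail coincides with the sign of $\epsilon$. By choosing $\epsilon(\tau)$ of suitable sign in each of the four sub-cases (sub- vs.\ super-solution, $\boldsymbol{H+}$ vs.\ $\boldsymbol{H-}$) and $\mu$ large enough that $f(L|x|^{2-n},|x|)$ decays no faster than $r^{-n-\mu}$ at infinity — which is possible because the super-/sub-critical position of $l_s$ pins down the scaling of $f(Lr^{2-n},r)$ — one forces the tail to be a strict sub- or super-solution of $\Delta u+f(u,|x|)=0$ on $\{|x|\ge R(\tau)\}$. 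The matching radius $R(\tau)$ is then selected so that the distributional derivative-jump at the junction has the sign required for the global inequality to hold.

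The one-parameter family is produced by letting $\alpha(\tau)=D(\tau)$ vary monotonically over $(0,+\infty)$. Continuous dependence of~\eqref{radsta} on $\alpha$, combined with standard Emden--Fowler phase-plane analysis in the supercritical regime and its subcritical counterpart under $\boldsymbol{H+}$, yields the monotonicities of $\tau\mapsto D(\tau)$, $L_\zeta(\tau)$ and $L_\psi(\tau)$ and the stated limits as $\tau\to\pm\infty$. The strict inequality $L_\zeta(\tau)<L_\psi(\tau)$ is built into the construction, because at the common central value $D(\tau)$ the sub-solution corrector $\epsilon$ has the opposite sign to the super-solution one, forcing the two asymptotic coefficients to straddle the naive matching value $U(R(\tau),\alpha(\tau))R(\tau)^{n-2}$.

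The main obstacle is producing the corrector and controlling it uniformly in $\tau$: it must simultaneously be (i) small enough that the leading fast-decay asymptotic $L|x|^{2-n}$ is preserved, so that $L_\zeta$ and $L_\psi$ are genuine asymptotic coefficients; (ii) large enough to dominate the nonlinear term $f$ throughout the tail whenever the Laplacian of the pure harmonic profile has the wrong sign for the desired inequality; (iii) compatible with the correct derivative-jump sign at $|x|=R(\tau)$; and (iv) continuous in $\tau$ so that the limits $\tau\to\pm\infty$ come out as claimed. This balancing act is the technical heart of the more general Theorem~\ref{unstableadd1}, of which Theorem~\ref{main2} is a direct corollary.
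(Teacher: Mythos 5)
Your construction of the barriers is not the paper's (the paper glues two \emph{exact} solutions of \eqref{radsta} --- a regular solution and a fast--decay solution that cross at $r=\eu^{\tau}$ with a derivative jump of prescribed sign, the existence of such crossings for both signs being exactly what the Pohozaev/phase--plane Lemmas \ref{picture} and \ref{picture2} deliver --- rather than a regular core plus a corrected harmonic tail), but the decisive problems are in how you pass from the stationary barriers to the dynamical conclusions. Your blow-up mechanism cannot work: you propose to choose $\tau$ so that the fast-decay sub-solution dominates an initial datum known to blow up by Theorem~\ref{second}. First, Theorem~\ref{second} is only available in the branch $\boldsymbol{H-}$ with $2^\ast\le l_s<\sigma^\ast$, whereas Theorem~\ref{main2} also covers $\boldsymbol{H+}$ with $l_u,l_s\in(2_\ast,2^\ast]$ (where all regular solutions are crossing and there are no G.S.\ at all) and $l_s\ge\sigma^\ast$. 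Second, and fatally, a function with fast decay $\sim L|x|^{2-n}$ can \emph{never} satisfy $\zeta_\tau\gneqq U(\cdot,\alpha)$ for a G.S.\ with slow decay $\sim|x|^{-m(l_s)}$, because $m(l_s)=2/(l_s-2)<n-2$ for $l_s>2_\ast$; the sub-solution is eventually below every slow-decay solution. The entire surprise of the theorem is that fast-decay data blow up although they dominate no datum previously known to blow up. The paper's actual argument is indirect: the orbit of a lower solution is non-decreasing (Theorem~\ref{thm:theorem-2.4}, Remark~\ref{limit}), so if it were global it would converge to a positive stationary solution lying above the barrier, and Lemmas \ref{nosol-}/\ref{nosol+} (resting on Propositions \ref{sub}/\ref{super} and the oscillation Remark \ref{macchina}) show no such stationary solution exists.

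The decay-to-zero half has the symmetric gap. A non-increasing orbit trapped below a fast-decay super-solution converges monotonically to \emph{some} non-negative stationary solution; you must exclude a positive limit, and ``standard linear heat-kernel estimates'' do not do this for the nonlinear flow. In the supercritical branch one can argue that all positive stationary solutions have slow decay (Proposition \ref{super}), but in the subcritical branch $\boldsymbol{H+}$, $l_u,l_s\in(2_\ast,2^\ast]$, Proposition \ref{sub} provides \emph{uncountably many} S.G.S.\ with fast decay, so a fast-decay upper bound alone does not force the limit to vanish; the paper again needs the crossing Lemma \ref{nosol+}. Two further, more minor, points: your condition on the corrector exponent is reversed --- to have $|\epsilon|\mu(n-2+\mu)r^{-n-\mu}$ dominate $f(Lr^{2-n},r)\sim r^{-(n-2)(q-1)}$ on the whole tail you need $\mu$ \emph{small}, namely $0<\mu\le(n-2)(q-1)-n$ in the model case (positive precisely because $q>2_\ast$) --- and the strict separation $L_\zeta(\tau)<L_\psi(\tau)$ together with the limits as $\tau\to\pm\infty$ is obtained in the paper for free, since $L_\zeta=\beta_1$ and $L_\psi=\beta_2$ are the fast-decay coefficients of the two intersections of the line $y_1=\tilde R$ with $W^s(\tau;l_s)$ and Remark \ref{corr} gives their asymptotics, whereas in your scheme these quantities depend on the ad hoc corrector and matching radius and their claimed monotonicity and limits are not established.
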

\begin{remark}\label{unstableadd3}
  For any fixed $\tau \in\RR$, $\psi_{\tau}(x) \ge \zeta_{\tau}(x)$
  when $x \in \RR^n$.  From the constructive proof it follows also
  that both $\|\zeta_{\tau}(x)(1+|x|^{\nu})\|_{\infty}$ and
  $\|\psi_{\tau}(x)(1+|x|^{\nu})\|_{\infty}$ go to $0$, as $\tau \to
  +\infty$, for any $0\le \nu < m(l_s)$, while they are uniformly
  positive for $\nu=m(l_s)$.
\end{remark}
A new aspect of Theorem \ref{main2}, besides the generality of the
potential we can deal with, is in the fact that we can find fast
decaying initial data, with $L^{\infty}$-norm arbitrarily small, which
blow up in finite time, while the critical decay indicated in
literature (also by results as Theorem \ref{wangslowLM}) for such a
phenomenon seems to be slow decay, i.e. $|x|^{-m(l_s)}$ (see
\cite{GNW2}).

We emphasize that this result is new even when $f(u, |x|)= u^{q-1}$.
Notice that, the dichotomy depicted in Theorem~\ref{main2} and in 
Corollary~\ref{main3}, just below, takes place
even for solutions slightly above or below a G.S. if we are in the
hypotheses of Theorem~\ref{second}. The novelty here is that we can
look at a much larger range of parameters and that this families of
sub and super-solution have fast decay: Thus, we can find solutions
with fast decay and $L^{\infty}$-norm small which blow up in finite
time. \smallskip

The relevance of Theorem~\ref{main2} follows from the next
corollary. This latter result is an immediate consequence of the
comparison principle.
\begin{cor}\label{main3}
  Assume that we are under the hypotheses of Theorem~\ref{main2}.
  Then for any $\ep>0$ we can find smooth function $\phi: \RR^n \to
  \RR$, such that $\|\phi\|_{\infty}<\ep$ and there is $T_{\phi}>0$
  such that the classic solution $u=u(x,t; \phi)$ of
  (\ref{parab})--(\ref{data}) satisfies $\lim_{t \to T_{\phi}}\|
  u(x,t; \phi)\|_{\infty}=+\infty$.  On the other hand we can find
  smooth function $\phi: \RR^n \to \RR$, such that
  $\|\phi\|_{\infty}>1/\ep$ and the classic solution $u=u(x,t; \phi)$
  of (\ref{parab})--(\ref{data}) is defined for any $t \ge 0$ and
  satisfies $\lim_{t \to +\infty}\| u(x,t; \phi)
  (1+|x|^{\nu})\|_{\infty}=0$ for any $0 \le \nu<n-2$.
\end{cor}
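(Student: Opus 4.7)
The strategy is to read off Corollary~\ref{main3} from Theorem~\ref{main2} by a direct application of the parabolic comparison principle, tuning the parameter $\tau$ so that the upper/lower radial solutions furnished there have the prescribed $L^{\infty}$-norm. The two key ingredients are the one-sided monotonicity of $D(\tau)$ (namely $D(\tau)\to 0$ as $\tau\to+\infty$ and $D(\tau)\to+\infty$ as $\tau\to-\infty$) and the comparison principle for classical solutions of \eqref{parab}--\eqref{data}, which is available in the $C_B$-mild and $C_S$-mild classes set up in Section~\ref{sec:local-existence}.

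For the blow-up assertion, given $\ep>0$ I would fix $\tau_+$ large enough that $\|\zeta_{\tau_+}\|_{\infty}=D(\tau_+)<\ep/2$. Because $\zeta_{\tau_+}$ is a bounded continuous radial classical solution of \eqref{laplace} (at least $C^2$ by $\boldsymbol{F0}$, and smoother if $f$ is), a routine mollification $\zeta_{\tau_+}\ast\rho_\delta$ augmented by an arbitrarily small positive $C^\infty$ bump produces $\phi\in C^\infty(\RR^n)$ with $\phi\ge\zeta_{\tau_+}$ pointwise and $\|\phi\|_\infty<\ep$. The comparison principle then gives $u(x,t;\phi)\ge u(x,t;\zeta_{\tau_+})$ on the common interval of existence; since $u(\cdot,\cdot\,;\zeta_{\tau_+})$ blows up in finite time by Theorem~\ref{main2}, so does $u(\cdot,\cdot\,;\phi)$.

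The decay part is symmetric. Choose $\tau_-$ sufficiently negative that $D(\tau_-)>2/\ep$, and construct $\phi\in C^\infty(\RR^n)$ with $0\le\phi\le\psi_{\tau_-}$ and $\|\phi\|_\infty>1/\ep$ by mollifying $\psi_{\tau_-}$ and multiplying by a smooth cutoff to enforce the pointwise ordering. Comparison yields $0\le u(x,t;\phi)\le u(x,t;\psi_{\tau_-})$, so that
\[
  \|u(x,t;\phi)(1+|x|^{\nu})\|_\infty \le C_\nu\, \|u(x,t;\psi_{\tau_-})(1+|x|)^{\nu}\|_\infty \longrightarrow 0 \quad \text{as } t\to+\infty,
\]
for every $0<\nu<n-2$ (using the elementary equivalence $1+|x|^\nu\asymp(1+|x|)^\nu$); the endpoint $\nu=0$ is immediate, being controlled by any weighted norm with small positive $\nu$. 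The only mildly technical point is the smoothing step, but since $\zeta_\tau$ and $\psi_\tau$ are bounded and uniformly continuous and any $L^\infty$ perturbation can be absorbed into the tolerance by taking the mollification scale small enough, no real obstacle arises: the substance of the argument has been compressed into Theorem~\ref{main2}.
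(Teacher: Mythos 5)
Your proposal is correct and takes essentially the same route as the paper, which proves Corollary~\ref{main3} only by the remark that it is an immediate consequence of the comparison principle applied to the families $\zeta_{\tau}$, $\psi_{\tau}$ of Theorem~\ref{main2}, with $\tau$ tuned through $D(\tau)\to 0$ (resp.\ $D(\tau)\to+\infty$) exactly as you do. The only cosmetic slip is calling $\zeta_{\tau_+}$ a classical solution of \eqref{laplace}: it is a glued upper solution with a corner on the sphere $|x|=\eu^{\tau_+}$, hence continuous and bounded but not $C^1$ there, which is all your mollification step actually uses (and the small additive correction dominating the mollification error should be taken globally, e.g.\ a constant, rather than a compactly supported bump).
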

From the above corollary we see how sensitive is, with respect to the initial data,
equation (\ref{parab})--(\ref{data}): We can find ``large" initial
data $\phi$ which converge to the null solution and ``small" initial
data which blow up in finite time.  Indeed, we can also construct
initial data $\phi_1$ and $\phi_2$ such that for any $\ep>0$ small we
have $\|(\phi_1-\phi_2)(x)[1+|x|^{\nu}]\|_{\infty}< \ep$, whenever
$0<\nu<m(l_s)$, and $u(x,t; \phi_1)$ blows up in finite time, while
$u(x,t; \phi_2)$ is defined for any $t$ and has the null solution as
$\omega$-limit set. However we need to choose
$\|(\phi_1-\phi_2)(x)[1+|x|^{m(l_s)}]\|_{\infty}$ uniformly positive
and bounded.  \smallskip

\textbf{Plan of the paper.}
In Section~\ref{steadyasympt} we collect all the preliminary results
concerning regular and singular solutions of \eqref{radsta} and, in
particular, we prove new ordering
properties. Section~\ref{sec:local-existence} is devoted to prove
local existence of the solutions,
in the classical, and in the mild case giving also a new result
concerning singular solutions (which are slightly smaller than
S.G.S. of \eqref{radsta}),  using a suitable weighted $L^{\infty}$-norm. Finally,
in Section~\ref{sec:long-time-behavior}, we state and prove our main
results on stability and long time behavior of the considered
solutions.

\section{Ordering results and asymptotic estimates for the
elliptic problem.}\label{steadyasympt}
The results of this sections, which are a key point for the whole
argument, are obtained applying Fowler transformation to
(\ref{radsta}). Thus, we set
\begin{equation}\label{transf1}
\begin{aligned}
    r=e^s \, , \quad y_1(s,l)=U(r)r^{m(l)} \,,\quad  y_2(s,l)=U'(r)r^{m(l)+1}
    \\
  m(l)=\frac{2}{l-2} \, ,
    \quad  g(y_1,s;l)=f(y_1 \eu^{-m(l) s}, \eu^s) \eu^{(m(l)+2)s}
  \end{aligned}
\end{equation}
Here and in the sequel $l$ denotes a parameter which is always assumed to be larger
than $2$, so that $m(l)>0$ (see the exemplifying case in
  \eqref{constML} and also the parameters related to problem
  \eqref{si.a} below).
Using this change of variables, we pass
from \eqref{radsta} to the following system to which 
dynamical tools apply:
\begin{equation} \label{si.na} \left( \begin{array}{c}
      \dot{y}_1 \\
      \dot{y}_2 \end{array}\right) = \left( \begin{array}{cc} m(l) & 1
      \\ 0 & -[n-2-m(l)]
    \end{array} \right)
  \left( \begin{array}{c} y_1 \\ y_2  \end{array}\right) +\left(
    \begin{array}{c} 0 \\- g(y_1,s;l)\end{array}\right)
\end{equation}
In the whole section the dot indicates differentiation with respect to
$s$, and we introduce the following further notation which will be in
force in this section: We write $\boldsymbol{y}(s,\tau;
\boldsymbol{Q};\bar{l})=(y_1(s,\tau;
\boldsymbol{Q};\bar{l}),y_2(s,\tau; \boldsymbol{Q});\bar{l})$ for a
trajectory of (\ref{si.na}) where $l=\bar{l}$, evaluated at $s$ and
departing from $\boldsymbol{Q} \in \RR^2$ at $s=\tau$.

For illustrative purpose we assume first $f(u,r)= r^{\delta} u^{q-1}$,
so that we can set $l= 2\frac{q+\delta}{2+\delta}$ and system
(\ref{si.na}) reduces to the following autonomous system
\begin{equation} \label{si.a} \left( \begin{array}{c}
      \dot{y}_1 \\
      \dot{y}_2 \end{array}\right) = \left( \begin{array}{cc} m(l) & 1
      \\ 0 & -[n-2-m(l)]
    \end{array} \right)
  \left( \begin{array}{c} y_1 \\ y_2  \end{array}\right) +\left(
    \begin{array}{c} 0 \\- (y_1)^{q-1} \end{array}\right)
\end{equation}
We stress that in this case we passed from a singular non-autonomous
O.D.E.  to an autonomous system from which the singularity has been
removed.  Also note that when $\delta=0$ we can simply take $l=q$.

System (\ref{si.a}) admits three critical points for
$l>2_*=2\frac{n-1}{n-2}$: The origin $O=(0,0)$,
$\boldsymbol{P}=(P_1,P_2)$ and $-\boldsymbol{P}$, where $P_2=-m(l)P_1$
and $P_1>0$.  The origin is a saddle point and admits a
one-dimensional $C^1$ stable manifold $\overline{M}^s$ and a
one-dimensional $C^1$ unstable manifold $\overline{M}^u$, see Figure
\ref{livelli}.  The origin splits $\overline{M}^s$ (respectively
$\overline{M}^u$) in two relatively open components: We denote by
$M^s$ (resp. by $M^u$) the component which leaves the origin and
enters the semi-plane $y_1 \ge 0$. Since we are just interested on
positive solutions we will call, with a little abuse of notation,
$M^s$ and $M^u$ unstable and stable manifold.

To complete the depiction of the phase portrait in Figure
\ref{livelli}, we recall the following result (see e.g. \cite{Fjdde})
\begin{remark}\label{criticalP}
  The critical point $\boldsymbol{P}$ of (\ref{si.a}) is an unstable
  node for $2_*<l \le \sigma_*$, an unstable focus if $\sigma_* < l<
  2^*$, a center if $l=2^*$, a stable focus if $2^*<l<\sigma^*$ and a
  stable node if $l \ge \sigma^*$, where $2_*,2^*,\sigma^*$ are as in
  (\ref{constant}) and
\begin{equation} \label{eq:sigma-basso}
\sigma_* :=  2\frac{n-2+2 \sqrt{n-1}}{n+2 \sqrt{n-1}-4}.
\end{equation}
\end{remark}

From some asymptotic estimate we deduce the following useful result
(see, e.g. \cite{FArch,Fcamq} for the proof in the $p$-Laplace
context).

\begin{remark}\label{corrispondenze2}
  Regular solutions $u(r)$ of Equation (\ref{radsta}) correspond to
  trajectories $\boldsymbol{Y}(s;l)$ of system (\ref{si.na}) departing
  from points in $M^u$ and viceversa.  Positive solutions with fast
  decay $u(r)$ of (\ref{radsta}), correspond to trajectories
  $\boldsymbol{Y}(s;l)$ of system (\ref{si.na}) departing from points
  in $M^s$ and viceversa.
\end{remark} Using the Pohozaev identity introduced in \cite{Po} and
adapted to this context in \cite{FArch}, we can draw a picture of the
phase portrait of (\ref{si.a}), see Figure \ref{livelli}, and deduce
information on positive solutions of (\ref{radsta}); we postpone a
sketch of the proof to the next subsection, where the general
non-autonomous case is considered (anyway see \cite{FArch} or
\cite{Fcamq} for a detailed proof in the more general $p$-Laplace
context).  Then it is easy to classify positive solutions: In the
supercritical case ($l>2^*$) all the regular solutions are G.S. with
slow decay, there is a unique S.G.S. with slow decay; in the critical
case ($l=2^*$) all regular solutions are G.S. with fast decay and
there are uncountably many S.G.S. with slow decay; in the subcritical
case ($2<l<2^*$) all the regular solutions are crossing, there are
uncountably many S.G.S. with fast decay and a unique S.G.S. with slow
decay.

Since (\ref{si.a}) is autonomous we also get the following useful
consequence.
\begin{remark}\label{tinvariant}
  Fix $\boldsymbol{U} \in M^u$ and $\boldsymbol{S} \in M^s$. Consider
  the trajectories $\boldsymbol{y}(s,\tau;\boldsymbol{U})$,
  $\boldsymbol{y}(s,\tau;\boldsymbol{S})$ of (\ref{si.na}) and the
  corresponding regular solution $U(r,D)$ and fast decay solution
  $V(r,L)$ of (\ref{radsta}).  Then
  \begin{equation*}
    \begin{array}{cc}
      D=D(\tau)=D(0) \eu^{-m\tau}  & L=L(\tau)=L(0) \eu^{(n-m) \tau}  \\
      U(r,D)=D U(rD^{1/m},1) & V(r,L)=L V(rL^{1/m},1)
    \end{array}
  \end{equation*}
\end{remark}
\begin{proof}
  Since $y_1(s+\tau,\tau,\boldsymbol{Q})=y_1(s,0,\boldsymbol{Q})$ we
  get $U(r\eu^{\tau},D(\tau))\eu^{m(l) \tau}=U(r,D(0))$ hence letting
  $r \to 0$ we find $D(\tau)=D(0) \eu^{-m(l)\tau}$, and this concludes
  the proof concerning $U$.  Similarly we find
  $V(\eu^{s+\tau},L(\tau))\eu^{n(s+\tau)}\eu^{(m-n)\tau}=V(\eu^{s},L(0))\eu^{n
    s}$, hence, letting $s \to +\infty$ we get
  $L(\tau)=L(0)\eu^{(n-m)\tau} $. Then again from
  $y_1(s+\tau,\tau,\boldsymbol{Q})=y_1(s,0,\boldsymbol{Q})$ we get
  $V(r,L)=L V(rL^{1/m},1)$:
  this concludes the proof. 
\end{proof}
\begin{figure}
  \centering
  \includegraphics*[totalheight=15cm]{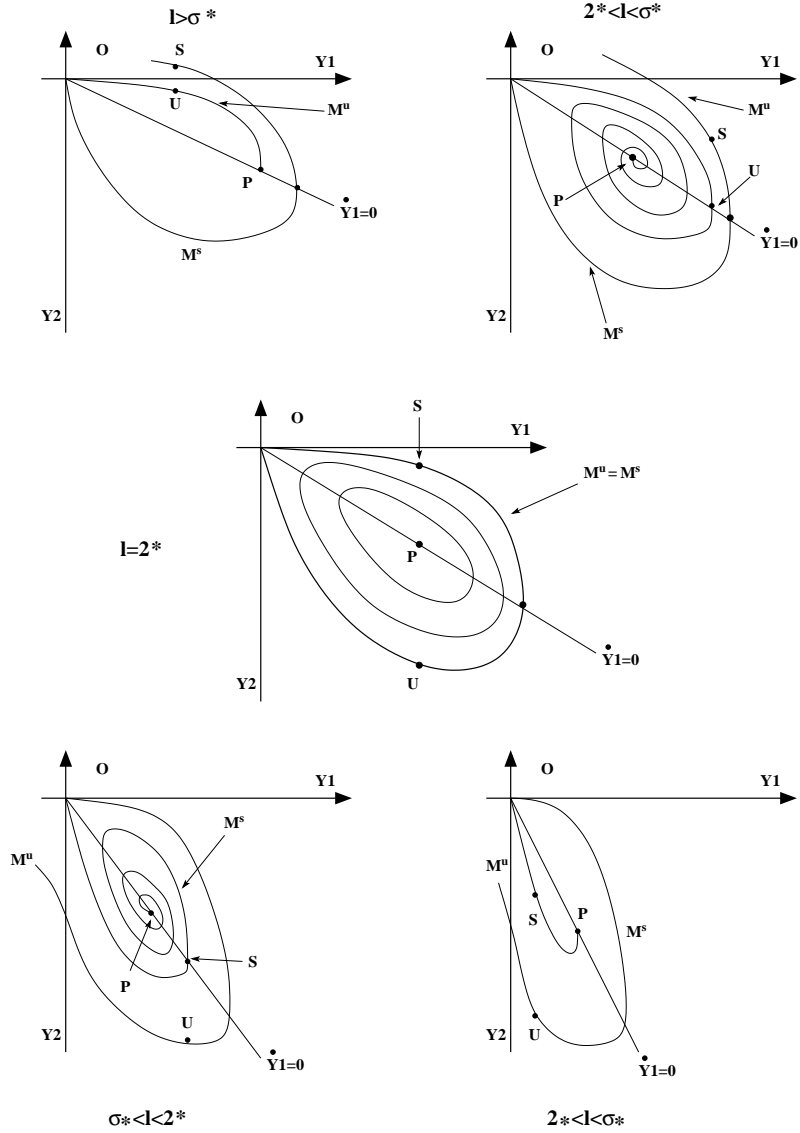}
  \caption{Sketches of the phase portrait of (\ref{si.na}), for $q>2$
    fixed.}\label{livelli}
\end{figure}
 We stress that all the previous  arguments concerning the autonomous
Equation \eqref{si.na} still hold true for any autonomous
super-linear system \eqref{si.na}, more precisely whenever
$g(y_1,s;l)\equiv g(y_1;l)$ and $g(y_1;l)$ has the following property,
denoted by $\boldsymbol{G0}$ (see \cite{Fcamq} for a proof in the
general $p$-Laplace context). We have the following
\begin{itemize}
\item[$\boldsymbol{G0}$:] $\frac{\partial g}{\partial y_1}(y_1;l)$ is a
  strictly increasing function for $y_1>0$ and
  $$ \lim_{y_1 \to 0}\frac{g(y_1;l)}{y_1} =0 \, , \qquad
  \lim_{y_1 \to +\infty} \frac{g(y_1;l)}{y_1}=+\infty \,.$$
\end{itemize}
In particular Remarks \ref{criticalP}, \ref{corrispondenze2},
\ref{tinvariant} continue to hold (see \cite{Fcamq}).

We emphasize that $\boldsymbol{G0}$ implies that
$\frac{g(y_1;l)}{y_1}$ is strictly increasing for $y_1>0$; then it
follows easily that $g(y_1;l)$ is strictly increasing too. \smallskip

To draw correctly the analogous of Figure \ref{livelli} for the
present case, we need to use the Pohozaev identity introduced in
\cite{Po} (see also \cite{Fcamq, Fow} for more details).  Let us
introduce the Pohozaev function
\begin{equation*}
  P(u, u'; r) :=  \frac{n - 2}{2} r^{n-1} uu' + \frac{r^n|u'|^2}{2} +
  F(u, r)r^n,
\end{equation*}
where $F(u, r)=\int_0^uf(a, r)da $.  Now, consider the non-autonomous
system (\ref{si.na}) and denote by $G(y_1,s;l)=\int_0^{y_1}g(a,s;l)
da$. In this dynamical setting the transposition of $P(u, u'; r)$ is
given by
\begin{equation*}
  H(y_1,y_2,s;l)= \frac{n-2}{2}y_1 y_2 +\frac{|y_2|^2}{2}+G(y_1,s;2^*),
\end{equation*}
then, if $\boldsymbol{y}(s;2^*)= (y_1(s,2^*),y_2(s,2^*))$ solves
(\ref{si.na}) with $l=2^*$ we have the following
\begin{equation}\label{derivo}
  \frac{d H}{d s} (y_1(s,2^*),y_2(s,2^*),s;2^*)=
  \frac{\partial G}{\partial s} (y_1(s,2^*),s;2^*)
\end{equation}
Moreover, if $\boldsymbol{y}(s;2^*)$ and $\boldsymbol{y}(s;l)$ are
trajectories of \eqref{si.na} corresponding to the same solution
$U(r)$ of \eqref{radsta}, we get
\begin{equation}\label{relH}
  H(\boldsymbol{y}(s,2^*),s;2^*)=\eu^{A(l)s}
  H(\boldsymbol{y}(s,l),s;l) \, .
\end{equation}
where $A(l)=n-2-2m(l)$.  We stress that (\ref{derivo}) and
(\ref{relH}) hold for the general non-autonomous system (\ref{si.na}).

Let us fix $\tau \in \RR$ and $l>2_*$ and denote by
\begin{equation}\label{Kb}
\begin{aligned}
    K(b) & :=\{(y_1,y_2) \mid H(y_1,y_2,\tau;l)=b \} \\
    K_+(b) & :=\{(y_1,y_2) \mid H(y_1,y_2,\tau;l)=b \, , \; y_1>0 \}
  \end{aligned}
\end{equation}
Then, there is $b^*(\tau,l)<0$ such that the level sets $K(b)$ of the
function $H$, is empty for $b< b^*(\tau,l)$, they are two closed
bounded curves contained in $y_2<0<y_1$ and in $y_1<0<y_2$ for
$b^*<b<0$ (the graph of the former gives $K_+(b)$), $K(b)$ is a
$8$-shaped curve having the origin as center for $b=0$, and it is a
closed bounded
curve surrounding the origin for $b>0$. \smallskip

From (\ref{derivo}) we see that $H(\boldsymbol{y}(s,2^*),s;2^*)$ is
increasing in $s$ (respectively decreasing) along the trajectories
$\boldsymbol{y}(s,2^*)$ of (\ref{si.na}) whenever $G(y_1,s; 2^*)$ is
increasing in $s$ (resp. decreasing in $s$).  Moreover from
(\ref{relH}) we see that $H(\boldsymbol{y}(s,2^*),s;2^*)$ and
$H(\boldsymbol{y}(s,l),s;l)$ have the same sign. Thus, if we consider
system (\ref{si.a}), for any $\Q \in M^u$ and $\R \in M^s$ we get
$H(\Q,s;l)<0<H(\R,s;l)$ when $l>2^*$, $H(\R,s;l)<0<H(\Q,s;l)$ when
$2<l<2^*$, and $H(\Q,s;l)=0=H(\R,s;l)$ when $l=2^*$.  Using
(\ref{derivo}) and (\ref{relH}), it can be proved that the phase
portrait of the autonomous system (\ref{si.a}) is again depicted as in
Fig. \ref{livelli}, see e.g. \cite{Fcamq,Fjdde}.

We collect here the values of several constants and parameters which
will be relevant for the whole paper.  Thus, recalling that $m(l)=\frac{2}{l-2}$,
we introduce the
followings
\begin{equation}\label{constants}
A(l)=n-2-2m(l) \, , \quad C(l)=m(l)[n-2-m(l)].
\end{equation}
Recall that $(P_1,-m(l)P_1)$ is a critical point of (\ref{si.na}) if
it is $s$-independent, so $P_1$ is the unique positive solution in $y$
of $g_l(y;l)=C(l) y$.  When $g(y,l)=y^{q-1}$ then
$P_1=(C(l))^{1/(q-2)}$.  Let $n>2$ we denote by $\sigma_*<\sigma^*$
the real solutions of the equation in $l$ given by
\begin{equation}\label{defsigma}
  A(l)^2-4[C(l)+\frac{\partial g}{\partial y}(P_1,l)]=0 \, .
\end{equation}
which reduces to $A(l)^2-4(q-2)C(l)=0$ for $g(y)=y^{q-1}$.  In this
case the value of $\sigma^*$ coincide with the one given in
(\ref{constant}).

\subsection{The stationary problem: the spatial dependent case.}
\label{subsec-stationary-prob}
Now we turn to consider (\ref{si.na}) in the $s$-dependent case. The
first step is to extend invariant manifold theory to the
non-autonomous setting; there are several ways to achieve the result:
using skew-product semi-flow (see, e.g.  \cite{JPY}), or through
Wazewski's principle, see e.g. \cite{Fcamq}.  Here, we follow a simpler
construction which is less general but preserve more properties (in
particular the ordering results Propositions~\ref{disorder+},
\ref{disorder-}),
used e.g. in \cite{Fjdde,Fdie}.  So
we introduce an extra variable, either $z(s)=\eu^{\varpi s}$ or
$\zeta(s)= \eu^{-\varpi s}$, in order to deal with a $3$-dimensional
autonomous system. We use $z$ and $\zeta$ in order to investigate the
behavior respectively as $s \to -\infty$ (i.e. $r \to 0$), and as $s
\to +\infty$ (i.e. $r \to +\infty$).

We collect here below the assumptions used in the main results:
\begin{itemize}
\item[$\boldsymbol{Gu}$:] There is $l_u > 2_*$ such that for any
  $y_1>0$ the function $ g(y_1,s;l_u)$ converges to a $s$-independent
  locally Lipschitz function $g(y_1,-\infty; l_u)\not\equiv 0$ as $s
  \to -\infty$, uniformly on compact intervals.  The function
  $g(y_1,-\infty; l_u)$ satisfies $\boldsymbol{G0}$.  Moreover there
  is $\varpi>0$ such that $\lim_{s \to -\infty} \eu^{-\varpi
    s}\frac{\partial}{\partial s} g(y_1,s;l_u)=0$.  Furthermore if
  $l_u=2^*$, we also assume that there is $M>0$ such that
  $g(y_1,s;2^*)$ is monotone in $s$ for for any $y_1>0$ and any
  $s<-M$. \\[-0.3 cm]
\item[$\boldsymbol{Gs}$:] There is $l_s >2_*$ such that for any $y_1>0$
  the function $ g(y_1,s;l_s)$ converges to a $s$-independent locally
  Lipschitz function $g(y_1; l_s)\not\equiv 0$ as $s \to +\infty$,
  uniformly on compact intervals. The function $g^{+\infty}(y_1;l_s)$
  satisfies $\boldsymbol{G0}$.  Moreover there is $\varpi>0$ such that
  $\lim_{s \to +\infty} \eu^{+\varpi s}\frac{\partial}{\partial s}
  g(y_1,s;l_s)=0$.  Furthermore if $l_s=2^*$, we also assume that
  there is $M>0$ such that $g(y_1,s;2^*)$ is monotone in $s$ for for
  any $y_1>0$ and any $s>M$. \\[-0.3 cm]
\item[$\boldsymbol{A^-}$:] The function
  $G(y_1,s;2^*):=\int_0^{y_1}g(a,s;2^*) da$ is decreasing in $s$ for
  any $y_1 >0$ strictly for some $s$. \\[-0.3 cm]
\item[$\boldsymbol{A^+}$:] $G(y_1,s;2^*)$ is increasing in $s$ for any
  $y_1 >0$ strictly for some $s$.
\end{itemize}
 Hypotheses $\boldsymbol{Gu}$, $\boldsymbol{Gs}$ are used to construct
unstable and stable manifolds for the Equation \eqref{si.na} when it
depends on $s$, while $\boldsymbol{A^-}$ and $\boldsymbol{A^+}$ mean
that the system is respectively supercritical and subcritical with
respect to $q=2^*$, and are used to understand the position of these
manifolds.
\begin{remark}\label{spiegoluls}
  Observe that if $f$ is as in (\ref{eq:potential-0}),
  (\ref{eq:potential-1}), (\ref{eq:potential-2.5}) and
  (\ref{asintotico}) hold then $\boldsymbol{Gu}$ and $\boldsymbol{Gs}$
  hold with $l_u$ and $l_s$ defined as in (\ref{esempioM}) and
  (\ref{esempioMbis}).
\end{remark}

Assume
$\boldsymbol{Gu}$. We introduce the following $3$-dimensional
autonomous system, obtained from (\ref{si.na}) by adding the extra
variable $z=\textrm{e}^{\varpi t}$:
\begin{equation}\label{si.naa}
  \begin{pmatrix}
    \dot{y}_{1} \\
    \dot{y}_{2} \\
    \dot{z}
  \end{pmatrix}
  = \begin{pmatrix}
    m(l_u) &
    1 &0
    \\ 0 & -[n-2-m(l_u)] & 0 \\
    0 & 0 & \varpi
  \end{pmatrix}
  \begin{pmatrix}
    y_1 \\ y_2 \\ z
  \end{pmatrix}+
  \begin{pmatrix}
    0 \\-
    g(y_1,\frac{\ln(z)}{\varpi};l_u)\\
    0
  \end{pmatrix}
\end{equation}
Similarly if $\boldsymbol{Gs}$ is satisfied we set $l=l_s$ and
$\zeta(t)= \eu^{-\varpi t}$ and we consider
\begin{equation}\label{si.naas}
  \begin{pmatrix}
    \dot{y}_{1} \\
    \dot{y}_{2} \\
    \dot{\zeta}
  \end{pmatrix} =
  \begin{pmatrix}
    m(l_s) &
    1 & 0 \\
    0 & -[n-2-m(l_s)] & 0 \\
    0 & 0 & -\varpi
  \end{pmatrix}
  \begin{pmatrix}
    y_1 \\ y_2 \\ \zeta
  \end{pmatrix}
  + \begin{pmatrix}
    0  \\-
    g(y_1,-\frac{\ln(\zeta)}{\varpi};l_s)\\ 0
  \end{pmatrix}
\end{equation}
The technical assumption at the end of $\boldsymbol{Gu}$ (and
$\boldsymbol{Gs}$) is needed in order to ensure that the system is
smooth for $z=0$ and $\zeta=0$ too.  Consider (\ref{si.naa})
(respectively (\ref{si.naas})) each trajectory that may be continued
for any $s \le 0$ (resp. for any $s \ge 0$) is such that its
$\alpha$-limit set is contained in the $z=0$ plane (resp. its
$\omega$-limit set is contained in the $\zeta=0$ plane); moreover such
a plane is invariant and the dynamics reduced to $z=0$
(resp. $\zeta=0$) coincide with the one of the autonomous system
(\ref{si.na}) where $g(y_1,s; l_u)\equiv g(y_1,-\infty; l_u)$ (resp.
$g(y_1,s;l_s)\equiv g(y_1,+\infty; l_s)$).

 Observe that the origin of
(\ref{si.naa}) admits a $2$-dimensional unstable manifold
$\mathbf{W^u}(l_u)$ which is transversal to $z=0$ (and a one
dimensional stable manifold $M^s$ contained in $z=0$), while the
origin of (\ref{si.naas}) admits a $2$-dimensional stable manifold
$\mathbf{W^s}(l_s)$ which is transversal to the plane $\zeta=0$ (and a
one dimensional unstable manifold $M^u$ contained in $\zeta=0$).
Following \cite{Fdie}, see also \cite{JPY,Fcamq} we see that, for any
$\tau \in \RR$,
\begin{equation*}
  \begin{array}{cc}
    W^u(\tau;l_u)=\boldsymbol{W^u}(l_u) \cap
    \{ z= \eu^{\varpi \tau} \}\,, \; & \;
    W^s(\tau;l_s)=\boldsymbol{W^s}(l_s) \cap
    \{ \zeta= \eu^{-\varpi \tau} \}\\
    W^u(-\infty;l_u)=\boldsymbol{W^u}(l_u) \cap
    \{ z= 0 \}\,, \; & \; W^s(+\infty;l_s)=\boldsymbol{W^s}(l_s) \cap \{ \zeta= 0 \}
  \end{array}
\end{equation*}
\noindent are one-dimensional manifolds.  Moreover they inherit the
same smoothness as (\ref{si.naa}) and (\ref{si.naas}). I.e., let $K$
be a segment which intersects $W^u(\tau_0;l_u)$ (respectively
$W^s(\tau_0;l_s)$) transversally in a point $\Q(\tau_0)$ for $\tau_0
\in[-\infty,+\infty)$ (respectively for $\tau_0
\in(-\infty,+\infty]$), then there is a neighborhood $I$ of $\tau_0$
such that $W^u(\tau;l_u)$ (respectively $W^s(\tau;l_s)$) intersects
$K$ in a point $\Q(\tau)$ for any $\tau \in I$, and $\Q(\tau)$ is as
smooth as (\ref{si.naa}) (resp. as (\ref{si.naas})).  Since we need to
compare $W^u(\tau;l_u)$ and $W^s(\tau;l_s)$ we introduce the
manifolds:
\begin{equation}\label{cambioL}
  \begin{split}
    &W^u(\tau;l_s):= \big\{ \R=\Q
    \textrm{exp}\big\{-\big(m(l_u)-m(l_s)\big)\tau
    \big\} \in \RR^2 \mid \Q \in W^u(\tau;l_u) \big\} \\
    &W^s(\tau;l_u):= \big\{ \Q=\R
    \textrm{exp}\big\{\big(m(l_u)-m(l_s)\big)\tau \big\} \in \RR^2
    \mid \R \in W^s(\tau;l_s) \big\}
  \end{split}
\end{equation}
As in the $s$-independent case, we see that \emph{regular solutions
  correspond to trajectories in $W^u$ while fast decay solutions
  correspond to trajectories in $W^s$}, see \cite{Fdie,Fcamq}.  More
precisely, from Lemma 3.5 in \cite{Fcamq} we get the following.
\begin{lemma}\label{corrispondenze}
  Consider the trajectory $\boldsymbol{y}(s,\tau,\Q;l_u)$ of
  (\ref{si.na}) with $l=l_u$, the corresponding trajectory
  $\boldsymbol{y}(t,\tau,\R; l_s)$ of (\ref{si.na}) with $l=l_s$ and
  let $u(r)$ be the corresponding solution of (\ref{radsta}). Then
  $\R=\Q \textrm{exp}[ (m(l_s)-m(l_u)) \tau]$.  Assume
  $\boldsymbol{Gu}$; then $u(r)$ is a regular solution if and only if
  $\Q \in W^u(\tau; l_u)$ or equivalently $\R \in W^u(\tau; l_s)$.
  Assume $\boldsymbol{Gs}$; then $u(r)$ is a fast decay solution if
  and only if $\R \in W^s(\tau; l_s)$ or equivalently $\Q \in
  W^s(\tau; l_u)$.\\
  Moreover if $\Q \in W^s(\tau;l_u)$ and $l_u>2_*$ then $\lim_{s \to
    +\infty}\boldsymbol{y}(s,\tau,\Q;l_u)=(0,0)$, and if $\R \in
  W^u(\tau;l_s)$ and $l_s>2$ then $\lim_{s \to
    -\infty}\boldsymbol{y}(s,\tau,\R;l_s)=(0,0)$.
\end{lemma}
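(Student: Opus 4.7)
The plan is to leverage the Fowler transformation (\ref{transf1}) to translate the asymptotic properties of radial solutions of (\ref{radsta}) into asymptotic behavior of trajectories of (\ref{si.na}), and then to invoke the invariant-manifold theory for the augmented autonomous systems (\ref{si.naa}) and (\ref{si.naas}) which was set up in the preceding paragraphs (this largely mirrors the argument of Lemma~3.5 in \cite{Fcamq} in the $p$-Laplace setting).

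First, for the rescaling identity $\R = \Q\exp[(m(l_s)-m(l_u))\tau]$, I would simply substitute $r=e^\tau$ into the two instances $l=l_u$ and $l=l_s$ of (\ref{transf1}). This gives $y_1(\tau,l_u) = U(e^\tau)e^{m(l_u)\tau}$ and $y_1(\tau,l_s) = U(e^\tau)e^{m(l_s)\tau}$ (and analogously for $y_2$, noting that the exponent in the second slot is $m(l)+1$, so the same factor $e^{(m(l_s)-m(l_u))\tau}$ comes out). Ratioing the two gives the claim.

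Next, for the correspondence between regular solutions and $W^u(\tau;l_u)$, assume $\boldsymbol{Gu}$. A regular solution $U(r,\alpha)$ with $U(0)=\alpha>0$ satisfies $y_1(s,l_u)=U(e^s)e^{m(l_u)s}\sim \alpha e^{m(l_u)s}\to 0$ as $s\to-\infty$; the analogous estimate for $y_2$ uses the standard bound $|U'(r)|\le Cr$ near $r=0$ that follows from integrating the ODE (\ref{radsta}) together with $\boldsymbol{F0}$. Adding the coordinate $z(s)=e^{\varpi s}\to 0$, the trajectory of (\ref{si.naa}) converges to the origin as $s\to-\infty$ and therefore sits on $\mathbf{W^u}(l_u)$, hence on $W^u(\tau;l_u)$. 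Conversely, any point on $\mathbf{W^u}(l_u)$ generates, by tangency to the eigenspace associated with $m(l_u)>0$, the asymptotics $y_1(s,l_u)\sim \alpha e^{m(l_u)s}$ with some $\alpha\ge 0$; undoing the Fowler transformation yields $U(e^s)\to\alpha$, i.e.\ a regular solution. The fast decay case under $\boldsymbol{Gs}$ is perfectly parallel: $V(r,\beta)r^{n-2}\to\beta$ translates to $y_1(s,l_s)\sim\beta e^{(m(l_s)-(n-2))s}\to 0$ as $s\to+\infty$, where it is crucial that $l_s>2_*$ so that $m(l_s)<n-2$; thus the trajectory lies on $W^s(\tau;l_s)$. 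The equivalent formulations in terms of $W^u(\tau;l_s)$ and $W^s(\tau;l_u)$ are just the change of variable (\ref{cambioL}) already established.

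The last assertion is the step I expect to require the most care, since one has to track which exponent governs which end. For $\Q\in W^s(\tau;l_u)$: by (\ref{cambioL}) the point $\R = \Q\,\exp[(m(l_s)-m(l_u))\tau]$ lies on $W^s(\tau;l_s)$, so by the correspondence proved above the underlying solution $u(r)$ has fast decay, i.e.\ $u(r)\sim \beta r^{-(n-2)}$ as $r\to+\infty$. Re-expressing this in the $l_u$ variables one obtains $y_1(s,l_u)=u(e^s)e^{m(l_u)s}\sim \beta\, e^{(m(l_u)-(n-2))s}$, and the hypothesis $l_u>2_*$ yields $m(l_u)<n-2$, whence $y_1(s,l_u)\to 0$ as $s\to+\infty$; the estimate for $y_2(s,l_u)$ follows analogously from $|U'(r)|r^{n-1}\to (n-2)\beta$. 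The symmetric statement for $\R\in W^u(\tau;l_s)$ with $l_s>2$ follows in the same way, using the regularity of $U$ at $r=0$: $U(e^s)\to\alpha$ forces $y_1(s,l_s)\sim \alpha e^{m(l_s)s}\to 0$ as $s\to-\infty$, since $m(l_s)>0$ whenever $l_s>2$. The bookkeeping — ensuring that the thresholds $l_u>2_*$ and $l_s>2$ are precisely what is needed — is the only subtle point; the rest of the argument is a direct translation between Fowler variables and the underlying radial ODE.
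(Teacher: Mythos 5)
Your proposal follows the same route the paper intends: the paper does not prove this lemma but imports it from \cite{Fcamq} (Lemma~3.5 there), and your reconstruction via the Fowler transformation \eqref{transf1} plus the invariant-manifold structure of the augmented systems \eqref{si.naa}--\eqref{si.naas} is exactly that argument; the rescaling identity, the identification of the relevant thresholds ($m(l)<n-2\iff l>2_*$ and $m(l)>0\iff l>2$), and the use of \eqref{cambioL} are all correct. One small inaccuracy: under $\boldsymbol{F0}$ alone, integrating \eqref{radsta} gives only $|U'(r)|\le Cr^{-1}$ near $r=0$ (since $f(u,r)\le C(u)r^{-2}$), not $|U'(r)|\le Cr$; this weaker bound still yields $y_2(s,l_u)=U'(r)r^{m(l_u)+1}=O(r^{m(l_u)})\to 0$, so the conclusion is unaffected. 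The converse implication (points of $\mathbf{W^u}(l_u)$ give regular solutions, i.e.\ that $U(r)$ has a finite limit as $r\to 0$) is stated somewhat loosely via ``tangency to the eigenspace,'' and is precisely the part for which the careful estimate in \cite{Fcamq} is needed, but as a sketch it is adequate.
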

For the reader's convenience we now report a result proved in
\cite{Fcamq} which explains further the relationship between
(\ref{radsta}) and (\ref{si.na}): We recall that, close to the origin,
$W^u(\tau;l_u)$ is locally a graph on the $y_1$ axis, while
$W^s(\tau;l_s)$ is locally a graph on its tangent space, i.e.  the
line $y_2=-(n-2)y_1$. So let us consider a ball $B(\delta)$ of radius
$\delta>0$ centered in the origin.  Follow $W^u(\tau;l_u)$
(respectively $W^s(\tau;l_s)$) from the origin towards $y_1>0$. If
$\delta>0$ is small enough, we can choose a segment $L \subset
B(\delta)$, parallel to the $y_2$ axis such that $W^u(\tau;l_u)$
(respectively $W^s(\tau;l_s)$) intersects $L$ transversally a first
time exactly in a point, say $\boldsymbol{Q^u}(\tau)$
(resp. $\boldsymbol{Q^s}(\tau)$).  We know that this point depends on
$\tau$ as smoothly as (\ref{si.na}), so it is at least $C^1$.

Moreover,
we have the following result analogous to \ref{tinvariant}, see
\cite{Fcamq,Fdie}.
\begin{remark}\label{corr}
  Assume $\boldsymbol{Gu}$. Consider
  $\boldsymbol{y}(s,\tau,\boldsymbol{Q^u}(\tau);l_u)$ and the
  corresponding regular solution $U(r,\alpha(\tau))$ of
  (\ref{radsta}). Then $\alpha(\tau) \to 0$ as $\tau \to -\infty$
  and $\alpha(\tau) \to +\infty$ as $\tau \to +\infty$.

  Similarly, assume $\boldsymbol{Gs}$. Consider
  $\boldsymbol{y}(s,\tau,\boldsymbol{Q^s}(\tau);l_s)$ and the
  corresponding fast decay solution $V(r,\beta(\tau))$ of
  (\ref{radsta}). Then $\beta(\tau) \to 0$ as $\tau \to -\infty$ and
  $\beta(\tau) \to +\infty$ as $\tau \to +\infty$.
\end{remark}
Now we turn to consider singular and slow decay solutions of (\ref{radsta}).\\
We observe that if $l_u >2_*$ then (\ref{si.naa}) has a critical point
in $y_1>0$, say $(\boldsymbol{P^{-\infty}},0)$, where
$\boldsymbol{P^{-\infty}}=(P_1^{-\infty},-m(l_u)P_1^{-\infty})$ is the
critical point of the autonomous system (\ref{si.na}) where
$g(y_1,s;l_u) \equiv g(y_1,-\infty;l_u)$, and $P_1^{-\infty}>0$.  It
is easy to check that $(\boldsymbol{P^{-\infty}},0)$ admits an
exponentially unstable manifold transversal to $z=0$ which is
$1$-dimensional (the graph of a trajectory which will be denoted by
$\boldsymbol{y^u}(s;l_u)$) if $l_u \ge 2^*$, and $3$-dimensional if
$2_*<l_u < 2^*$.

Analogously, if $l_s>2_*$ then (\ref{si.naas}) has a critical point in
$y_1>0$, say $(\boldsymbol{P^{+\infty}},0)$, where
$\boldsymbol{P^{+\infty}}=(P_1^{+\infty},-m(l_s)P_1^{+\infty})$ is the
critical point of the autonomous system (\ref{si.na}) where
$g(y_1,s;l_s) \equiv g(y_1,+\infty;l_s)$, and $P_1^{+\infty}>0$.
$(\boldsymbol{P^{+\infty}},0)$ admits an exponentially stable manifold
transversal to $\zeta=0$ which is $1$-dimensional (the graph of a
trajectory which will be denoted by $\boldsymbol{y^s}(s;l_s)$) if
$2_*<l_s \le 2^*$ and $3$-dimensional if $l_s>2^*$.  In the whole
paper we denote by $U(r,\infty)$ the solution of (\ref{radsta})
corresponding to $\boldsymbol{y^u}(s;l_u)$ and by
$\boldsymbol{y^u}(s;l_s)$ the corresponding trajectory of
(\ref{si.na}) with $l=l_s$; similarly we denote by $V(r,\infty)$ the
slow decay solution corresponding to $\boldsymbol{y^s}(s;l_s)$ and by
$\boldsymbol{y^s}(s;l_u)$ the corresponding trajectory of
(\ref{si.na}) with $l=l_u$.

Note that if $2_*<l_s<2^*<l_u$ then the manifolds $W^u(-\infty;l_u)$
and $W^s(+\infty;l_s)$ are paths connecting the origin respectively
with $\boldsymbol{P^{-\infty}}$ and $\boldsymbol{P^{+\infty}}$, and
contained in $y_2<0<y_1$ (we emphasize that this is not the case when
$2_*<l_u \le 2^* \le l_s$), see Figure \ref{livelli}.  Using a
connection argument we get the following.
\begin{remark}\label{connection}
  Assume $\boldsymbol{Gu}$, $\boldsymbol{Gs}$ with $2_*<l_s<2^*<l_u$,
  then $W^u(\tau;l_u)$ and $W^u(\tau;l_s)$ are paths connecting the
  origin respectively with $\boldsymbol{y^{u}}(\tau;l_u)$ and
  $\boldsymbol{y^{u}}(\tau;l_s)$ for any $\tau \in [-\infty,+\infty)$;
  similarly $W^s(\tau;l_u)$ and $W^s(\tau;l_s)$ are paths connecting
  the origin respectively with $\boldsymbol{y^{s}}(\tau;l_u)$ and
  $\boldsymbol{y^{s}}(\tau;l_s)$ for any $\tau \in (-\infty,+\infty]$
\end{remark}
\begin{remark}\label{disorder+}
  Assume $\boldsymbol{Gu}$ with $l_u>2_*$; then there is at least one
  singular solution $U(r,\infty)$ of (\ref{radsta}).  Moreover
  $U(r,\infty)r^{m(l_u)}$ converges to $P_1^{-\infty}$ as $r \to 0$.
  Furthermore $U(r,\infty)$ is the unique singular solution if
  $l_u>2^*$.
\end{remark}
A specular argument gives us a similar condition for slow decay
solutions.
\begin{remark}\label{disorder-}
  Assume $\boldsymbol{Gs}$ with $l_s>2_*$; then there is at least one
  slow decay solution $V(r,\infty)$ of (\ref{radsta}): moreover
  $V(r,\infty)r^{m(l_s)}$ converges to $P_1^{+\infty}$ as $r \to
  +\infty$.  Such a solution is unique if $2_*<l_s<2^*$.
\end{remark}
Now we give a further result concerning separation properties which
will be useful to construct sub and super-solutions for
(\ref{radsta}).
\begin{remark}\label{macchina}
  Assume $\boldsymbol{Gs}$ with $l_s \in [2^*, \sigma^*)$ and consider
  two slow decay solutions $\bar{U}(r)$ and $\tilde{U}(r)$ of
  (\ref{radsta}). Then $\bar{U}(r)-\tilde{U}(r)$ changes sign
  infinitely many times as $r \to +\infty$.  Analogously, assume
  $\boldsymbol{Gu}$ with $l_u \in (\sigma_*,2^*]$ and consider two
  singular solutions $\bar{V}(r)$ and $\tilde{V}(r)$ of
  (\ref{radsta});  with $\sigma^*$ and $\sigma_*$
    in \eqref{constant} and \eqref{eq:sigma-basso}, respectively.
Then $\bar{V}(r)-\tilde{V}(r)$ changes sign
  indefinitely as $r \to 0$.
\end{remark}
\begin{proof}
  Denote by $\boldsymbol{\bar{y}}(s)=\boldsymbol{\bar{y}}(s;l_s)$,
  $\boldsymbol{\tilde{y}}(s)=\boldsymbol{\tilde{y}}(s;l_s)$ the
  solutions of (\ref{si.na}) corresponding to $\bar{U}(r)$ and
  $\tilde{U}(r)$ respectively.  Now assume $l_s=2^*$ (and
  $g(y_1,s;2^*)$ monotone in $s$ for $s$ large). Then
  $H(\boldsymbol{\bar{y}}(s), s;2^*) \to \bar{b}$, and
  $H(\boldsymbol{\tilde{y}}(s), s;2^*) \to \tilde{b}$ as $s \to
  +\infty$, and $\bar{b}, \tilde{b}$ are both negative. If $\bar{b}
  \ge \tilde{b}>b^*$ then $\boldsymbol{\bar{y}}(s)$ converges to
  $K_+(\bar{b})$, and $\boldsymbol{\tilde{y}}(s)$ to $K_+(\tilde{b})$,
  see (\ref{Kb}): by construction $K_+(\tilde{b})$ lies in the
  interior of the bounded set enclosed by $K_+(\bar{b})$.  Denote by
  $\boldsymbol{A^+}$ and $\boldsymbol{A^-}$ the point of
  $K_+(\bar{b})$ respectively with largest and smallest component
  $y_1$.  When $\boldsymbol{\bar{y}}(s)$ passes close to
  $\boldsymbol{A^+}$ we have $\bar{y}_1(s)-\tilde{y}_1(s)>0$, while
  when $\boldsymbol{\bar{y}}(s)$ passes close to $\boldsymbol{A^-}$ we
  have $\bar{y}_1(s)-\tilde{y}_1(s)<0$, so the remark is proved.

  The argument works also if $\bar{b}>\tilde{b}=b^*$, so assume now
  $\bar{b}=\tilde{b}=b^*$, i.e. both $\boldsymbol{\bar{y}}(s)$ and
  $\boldsymbol{\tilde{y}}(s)$ converge to $\boldsymbol{P^{+\infty}}$.
  \\
  We denote by $h(s)=\bar{y}_1(s)-\tilde{y}_1(s)$: note that $h(s) \to
  0$ as $s \to +\infty$ and that it satisfies
  \begin{equation}\label{lin}
    \ddot{h}(s)+B h(s)+N(h(s),s)=0
  \end{equation}
  where $B=B(l_s)=\frac{\partial g^{+\infty}}{\partial
    y_1}(P_1^{+\infty})- m(l_s)[n-2-m(l_s)]= \frac{\partial
    g^{+\infty}}{\partial y_1}(P_1^{+\infty})- \frac{(n-2)^2}{4}$ and
  \begin{equation}\label{oN}
    \begin{split}
      N(h(s),s):=& g(\tilde{y}_1(s)+h(s),s)- g(\tilde{y}_1(s),s) -
      \frac{\partial g^{+\infty}}{\partial y_1}(P_1^{+\infty})h(s)= \\
      =& h(s)\int_0^1 \Big[\frac{\partial g}{\partial
        y_1}(\tilde{y}_1(s)+\sigma h(s),s) -\frac{\partial
        g^{+\infty}}{\partial y_1} (P_1^{+\infty}) \Big] d\sigma
    \end{split}
  \end{equation}
  So from (\ref{oN}), $\boldsymbol{Gs}$, and the fact that
  $|\tilde{y}_1(s)|+|h(s)| \to {P_1^{+\infty}}$ as $s \to +\infty$ we
  see that $N(h(s),s)=o(h(s))$. Therefore for any $\ep>0$ we find
  $S=S(\ep)$ such that $|N(h(s),s)| \le \ep |h(s)|$ for any $s>S$.
  Note also that from $\boldsymbol{Gs}$ we get $B>0$.  Setting
  \begin{equation}\label{teta}
    h(s)=   \rho(s) \frac{\cos(\theta(s))}{\sqrt{B}} \, , \qquad \dot{h}(s)= \rho(s) \sin(\theta(s))
  \end{equation}
  from (\ref{lin}) we get
  \begin{equation}\label{polar}
    \dot{\theta}(s)= -\sqrt{B}- \cos(\theta(s))\frac{N(\rho(s)
      \frac{\cos(\theta(s))}{\sqrt{B}},s)}{\rho(s) }< -\sqrt{B}(1-\ep)<-\frac{\sqrt{B}}{2}
  \end{equation}
  for any $s>S$ and $S$ large enough. Since $\theta(s) \to -\infty$,
  and $\rho(s) \to 0$ as $s \to +\infty$, but $\rho(s)>0$ for any $s
  \in \RR$, then $h(s)$ changes sign indefinitely, and the Remark
  follows.

  Assume now $l_s \in (2^*,\sigma^*)$: then both
  $\boldsymbol{\bar{y}}(s)$, $\boldsymbol{\tilde{y}}(s)$ converge
  exponentially to $\boldsymbol{P^{+\infty}}$, therefore
  $h(s)=\bar{y}(s)-\tilde{y}(s) \to 0$ as $s \to +\infty$.  In this
  case (\ref{lin}) is replaced by
  \begin{equation}\label{lin2}
    \ddot{h}(s)-A \dot{h}(s)+B h(s)+N(h(s),s)=0
  \end{equation}
  where $A=A(l_s)= n-2-2 m(l_s) <0$ and $B=B(l_s)=\frac{\partial
    g^{+\infty}}{\partial y_1}({P_1^{+\infty}})-
  m(l_s)[n-2-m(l_s)]>0$.  Note that $\sqrt{B}-\frac{A}{2}=2 \ep>0$ for
  $l_s \in (\sigma_*,2^*]$ and it equals to $0$ for $l_s=\sigma_*$.
  So, using again (\ref{teta}), and passing to polar coordinates we
  get
  \begin{equation}\label{polar2}
\begin{aligned}
    \dot{\theta}(s) & = -\sqrt{B}-\frac{A \sin(2 \theta(s))}{2}-
    \cos(\theta(s))\frac{N(\rho(s)
      \frac{\cos(\theta(s))}{\sqrt{B}},s)}{\rho(s) }\\
&<     -\sqrt{B}+\frac{A}{2}+\ep<-\ep
  \end{aligned}
\end{equation}
  So we find again that $\theta(s) \to -\infty$, and $\rho(s) \to 0$
  as $s \to +\infty$, thus $h(s)$ changes sign indefinitely, and the
  Remark follows.

  The case of singular solutions $\tilde{V}(r)$ and $\bar{V}(r)$ can
  be obtained from the previous repeating the argument but reversing
  the direction of $s$.
\end{proof}

Following \cite{Fcamq} we can show that if $\boldsymbol{A^-}$ holds
then (\ref{radsta}) is supercritical, while if $\boldsymbol{A^+}$
holds then (\ref{radsta}) is subcritical.  To be more precise we have
the following (see \cite[Theorems~4.2 and 4.3]{Fcamq}).
\begin{prop}\label{sub}\cite{Fcamq}
  Assume $\boldsymbol{Gu}$, $\boldsymbol{Gs}$, with $l_s, l_u \in(2_*,
  2^*]$, and $\boldsymbol{A^+}$, then all the regular solutions
  $U(r,\alpha)$ are crossing, i.e. there is $R(\alpha)$ such that
  $U(r,\alpha)>0$ for $0 \le r<R(\alpha)$ and $U(R(\alpha),\alpha)=0$.
  Furthermore $R(\alpha)$ is continuous and $R(\alpha) \to +\infty$ as
  $\alpha \to 0$, and if $l_u<2^*$ then $R(\alpha)\to 0$
  as $\alpha \to +\infty$.

  Moreover, all the fast and slow decay solutions are S.G.S.  So for
  any $\beta>0$ the fast decay solution $V(r, \beta)$ is a S.G.S. with
  fast decay; if $l_s<2^*$ there is a unique S.G.S. with slow decay,
  say $V(r,\infty)$, while if $l_s=2^*$ there are uncountably many
  S.G.S. with slow decay.
\end{prop}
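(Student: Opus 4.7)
The plan is to exploit the Pohozaev-type function $H$ of (\ref{derivo})--(\ref{relH}) together with the invariant manifold picture of (\ref{si.naa})--(\ref{si.naas}). Under $\boldsymbol{A^+}$, identity (\ref{derivo}) says that $H(\boldsymbol{y}(s,2^*),s;2^*)$ is non-decreasing in $s$ along every trajectory of (\ref{si.na}) with $l=2^*$, strictly on at least one subinterval. The first step is to pin down the sign of $H$ on the one-dimensional manifolds $W^u(\tau;2^*)$ and $W^s(\tau;2^*)$. For $\boldsymbol{Q}\in W^u(\tau;2^*)$ the corresponding trajectory has its $\alpha$-limit at the origin (Lemma \ref{corrispondenze}), where $H\to 0$; monotonicity then forces $H(\boldsymbol{Q},\tau;2^*)>0$. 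Symmetrically, for $\boldsymbol{R}\in W^s(\tau;2^*)$ one integrates from $s\to+\infty$ (where $H\to 0$) backward and obtains $H(\boldsymbol{R},\tau;2^*)<0$. By (\ref{relH}) and positivity of $\eu^{A(l)s}$, the signs are inherited by $H(\cdot,\tau;l)$ for $l=l_u$ and $l=l_s$.

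Next I would show that no regular solution is a ground state. A non-crossing $U(r,\alpha)$ corresponds to a trajectory $\boldsymbol{y}(s,\tau,\Q;l_u)$ in $W^u(\tau;l_u)$ that stays in the first quadrant for all $s$. Its $\omega$-limit, transferred via (\ref{cambioL}) to the $l_s$-coordinates, is either the origin or $\boldsymbol{P^{+\infty}}$. The first possibility would place the same trajectory in $W^s(\tau;l_s)$, which is incompatible with the opposite signs of $H$ on $W^u$ and $W^s$ found above. The second would require asymptotic convergence to $\boldsymbol{P^{+\infty}}$ along (\ref{si.naas}); but for $l_s\in(2_*,2^*]$ the critical point is either an unstable node/focus or a center of the $s$-autonomous limit system (Remark \ref{criticalP}), and the exponential smallness of the $s$-perturbation built into $\boldsymbol{Gs}$ (through the rate $\varpi$) does not create a stable direction. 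Hence $U(r,\alpha)$ must cross zero at some $R(\alpha)<+\infty$. Continuity of $R(\alpha)$ follows from the smoothness of $W^u(\tau;l_u)$ in $\tau$ (Section \ref{subsec-stationary-prob}) combined with transversality of the trajectory to $\{y_1=0\}$ at crossing; Remark \ref{corr} identifies $\alpha=\alpha(\tau)$ with $\alpha\to 0$ as $\tau\to-\infty$, giving $R(\alpha)\to+\infty$, while for $l_u<2^*$ one uses the rescaling inherent in (\ref{si.naa}) to conclude $R(\alpha)\to 0$ as $\alpha\to+\infty$.

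For the decay classification, any fast decay solution $V(r,\beta)$ corresponds to a trajectory in $W^s(\tau;l_s)$, on which $H<0$. By the sign dichotomy above, its backward extension cannot be prolonged up to $s\to-\infty$ inside the first quadrant (that would force $H\geq 0$ at the $\alpha$-limit), so it must leave $y_1>0$ at a finite $s$-time, which means $V(r,\beta)$ is singular at some $r>0$ and hence a S.G.S.\ with fast decay. Varying $\beta>0$ (by Remark \ref{corr}) gives the full one-parameter family. For slow decay, Remark \ref{disorder-} furnishes at least one $V(r,\infty)$ and uniqueness when $l_s<2^*$; the same $H$-sign argument shows it cannot be regular, so it is a S.G.S. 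When $l_s=2^*$ the critical point $\boldsymbol{P^{+\infty}}$ is a center of the autonomous limit, surrounded by a continuum of periodic orbits of various $H$-levels $b\in(b^*,0)$. The monotone $s$-perturbation allowed by $\boldsymbol{A^+}$ together with (\ref{derivo}) moves trajectories monotonically across these level sets and traps a one-parameter family of orbits asymptotic to $\boldsymbol{P^{+\infty}}$, yielding uncountably many slow decay S.G.S.

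I expect the main obstacle to be the critical threshold $l_s=2^*$ (and analogously $l_u=2^*$), where the linearization at $\boldsymbol{P^{+\infty}}$ is degenerate and the argument hinges on the interplay between the strict but possibly very weak monotonicity of $G$ in $s$ (hypothesis $\boldsymbol{A^+}$) and the exponential smallness of the perturbation encoded by $\varpi$ in $\boldsymbol{Gs}$. Producing the uncountable family of slow decay S.G.S.\ in this case, and ensuring that the monotone drift of $H$ along (\ref{derivo}) is compatible with the $s$-independent level sets $K(b)$ defined in (\ref{Kb}), is the most delicate point; everything else reduces to the sign book-keeping for $H$ and smooth dependence of $W^u,W^s$ on $\tau$.
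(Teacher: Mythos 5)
Your overall strategy --- sign bookkeeping for the Pohozaev function $H$ via (\ref{derivo})--(\ref{relH}), giving $H>0$ on $W^u$ and $H<0$ on $W^s$, and then reading off the position of these manifolds relative to the level sets $K(b)$ of (\ref{Kb}) --- is exactly the route the paper takes (its own proof is only a sketch deferring to \cite[Theorem~4.2]{Fcamq}). The crossing argument for regular solutions and the continuity of $R(\alpha)$ are at the same level of rigour as that sketch.

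However, the step ``fast decay solutions are S.G.S.'' is argued backwards and, as written, would prove the opposite of what is needed. You claim that the backward extension of a trajectory through $\R\in W^s(\tau;l_s)$ ``cannot be prolonged up to $s\to-\infty$ inside $y_1>0$'' because ``that would force $H\ge 0$ at the $\alpha$-limit'', and that therefore it ``must leave $y_1>0$ at a finite $s$-time, which means $V(r,\beta)$ is singular at some $r>0$ and hence a S.G.S.''. Three things go wrong. First, nothing forces $H\ge 0$ at the $\alpha$-limit: under $\boldsymbol{A^+}$ the function $H$ is non-increasing as $s$ decreases, so along the backward extension $H(s)\le H(\tau)<0$, and the admissible $\alpha$-limit sets ($\boldsymbol{P^{-\infty}}$, or a periodic orbit of the limiting autonomous system when $l_u=2^*$) all carry $H<0$; only the origin is excluded. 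Second, leaving $\{y_1>0\}$ at a finite $s_0$ means $V(\eu^{s_0})=0$, i.e.\ $V$ has a positive zero --- that is precisely the crossing behaviour of fast decay solutions in the supercritical companion statement (Proposition \ref{super}), not singularity. Third, ``singular at some $r>0$'' is not what a S.G.S.\ is. The correct argument is the reverse of yours: since $H(s)\le H(\tau)<0$ for all $s\le\tau$, the trajectory is trapped in the compact region bounded by $K_+(H(\tau))$, which lies in $y_2<0<y_1$ and is bounded away from $y_1=0$; hence it continues backward for all $s$, $y_1(s)\ge c>0$, and $V(r)=y_1(s)\,r^{-m}\ge c\,r^{-m}\to+\infty$ as $r\to 0$, so $V$ is positive for every $r>0$ and blows up at the origin, i.e.\ it is a S.G.S.\ with fast decay. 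This trapping-by-sublevel-sets argument is what the paper means by ``deducing the position of $W^u$ and $W^s$ from the level sets $K(b)$''. With that correction, the remainder of your outline (slow decay solutions, the delicate case $l_s=2^*$) is consistent with the paper's sketch.
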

\begin{proof}
  This result is borrowed from \cite[Theorem~4.2]{Fcamq} , where it
  is proved in the $p$-Laplace context in a more general framework, so
  here we just sketch the proof.  The main idea is to use the Pohozaev
  identity as done in the previous subsection: From (\ref{derivo}) we
  know that the function $H(\boldsymbol{y},s;2^*)$ is decreasing along
  the trajectories, and it is null for $\boldsymbol{y}=0$. Using also
  (\ref{relH}) we see that if $\Q \in W^u(\tau;l_u)$ and $\R \in
  W^s(\tau;l_s)$ we get $H(\Q,\tau;l_u)>0>H(\R,\tau;l_s)$.  Recalling
  which is the form of the level set $K(b)$ of $H$ (see (\ref{Kb}) and
  the discussion just after it) we deduce which is the position of
  $W^u(\tau;l_u)$ and $W^s(\tau;l_s)$ and using Lemma
  \ref{corrispondenze}, Remark \ref{connection} we conclude the proof.
\end{proof}
With a specular argument we get the following.
\begin{prop}\label{super}
  Assume $\boldsymbol{Gu}$, $\boldsymbol{Gs}$ with $l_u, l_s \ge 2^*$,
  and $\boldsymbol{A^-}$, then all the regular solutions $U(r,\alpha)$
  are G.S. with slow decay.  Moreover all the fast decay solutions
  $V(r, \beta)$ have a positive non-degenerate zero $r=R(\beta)$, i.e.
  $V(r,\beta)$ is positive for any $r>R(\beta)$ and it is null for
  $r=R(\beta)$. Furthermore $R(\beta)$ is continuous and $R(\beta)\to
  0$ as $\beta \to +\infty$, and if $l_u>2^*$ then $R(\beta) \to
  +\infty$ as $\beta \to 0$.  Further if $l_u=2^*$ there are
  uncountably many S.G.S. with slow decay, while if $l_u>2^*$ then
  there is a unique S.G.S. with slow decay say $U(r, \infty)$.
\end{prop}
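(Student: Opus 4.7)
The plan is to mirror the proof sketch of Proposition \ref{sub} with the direction of the Pohozaev monotonicity reversed. Under $\boldsymbol{A^-}$, identity (\ref{derivo}) yields that $H(\boldsymbol{y}(s,2^*),s;2^*)$ is non-increasing along trajectories of (\ref{si.na}) at $l = 2^*$, and strictly decreasing at some $s$. Identity (\ref{relH}) preserves sign since $e^{A(l)s} > 0$, so for $\Q \in W^u(\tau; l_u)$, the trajectory emanates from the origin as $s \to -\infty$ where $H \to 0$, which forces $H(\Q, \tau; l_u) < 0$; symmetrically, for $\R \in W^s(\tau; l_s)$, the trajectory returns to the origin as $s \to +\infty$, giving $H(\R, \tau; l_s) > 0$.

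Recalling the geometry of the level sets $K(b)$ from (\ref{Kb}) (bounded closed curves in $\{y_1>0, y_2<0\}$ surrounding $\boldsymbol{P}$ for $b^* < b < 0$; curves encircling the origin and crossing both axes for $b > 0$), the confinement $H < 0$ traps $W^u(\tau; l_u)$ in the bounded set $\{y_1 > 0, H < 0\}$, uniformly bounded away from the axis $y_1 = 0$ (since $H \ge 0$ on that axis). Hence regular solutions stay strictly positive for all $r > 0$. Since the trajectory is bounded and positive, its $\omega$-limit at $s = +\infty$ is non-empty and invariant under the limit autonomous system with $l = l_s$; by the phase portrait of the autonomous case with $l_s \ge 2^*$ and using Remark \ref{disorder-}, the only admissible limit inside the closed region is $\boldsymbol{P^{+\infty}}$, yielding a G.S.\ with slow decay via Lemma \ref{corrispondenze}. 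Dually, $W^s(\tau; l_s)$ lies in $\{H > 0\}$, and the level-set structure forces the backward trajectory from $s = +\infty$ to cross the $y_2$-axis at a finite time, producing the non-degenerate zero $r = R(\beta)$ of $V(r, \beta)$.

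Continuity of $R(\beta)$ follows from the $C^1$-smoothness of $W^s(\tau; l_s)$ in $\tau$ combined with the smooth parameterization in Remark \ref{corr}. For the limits, Remark \ref{corr} sends $\beta \to +\infty$ to $\tau \to +\infty$, concentrating the crossing of $y_1 = 0$ into arbitrarily small $s$-scales so that $R(\beta) \to 0$; if $l_u > 2^*$, the limit $\beta \to 0$ corresponds to $\tau \to -\infty$ and the trajectory spends arbitrarily long time near the origin before departing, hence $R(\beta) \to +\infty$. The S.G.S.\ with slow decay $U(r, \infty)$ is produced by the unstable manifold of $(\boldsymbol{P^{-\infty}}, 0)$ in system (\ref{si.naa}): one-dimensional for $l_u > 2^*$, giving a unique solution, and three-dimensional for $l_u = 2^*$, giving uncountably many; in both cases the confinement $H < 0$ together with Remark \ref{disorder-} forces these trajectories to converge to $\boldsymbol{P^{+\infty}}$ as $s \to +\infty$.

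The main obstacle is verifying genuine convergence to $\boldsymbol{P^{+\infty}}$ rather than accumulation on a larger invariant set, particularly when $l_u = 2^*$ where attraction is non-exponential and Remark \ref{macchina} is needed to rule out persistent oscillations of $y_1 - P_1^{+\infty}$. The strict decrease of $H$ at $l = 2^*$ supplied by $\boldsymbol{A^-}$ excludes nontrivial periodic orbits and heteroclinic cycles within $\{H < 0, y_1 > 0\}$; this is precisely the step that borrows the construction of \cite{Fcamq} and cannot be shortened further in the present non-autonomous setting.
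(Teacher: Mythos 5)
Your overall strategy is exactly the paper's: Proposition \ref{super} is obtained as the ``specular'' version of Proposition \ref{sub}, reversing the sign of the Pohozaev monotonicity under $\boldsymbol{A^-}$, so that $H(\Q,\tau;l_u)<0<H(\R,\tau;l_s)$ for $\Q\in W^u(\tau;l_u)$, $\R\in W^s(\tau;l_s)$, and then reading off positivity of regular solutions and the crossing of fast decay solutions from the geometry of the level sets $K(b)$ together with Lemma \ref{corrispondenze} and Remark \ref{connection}. Up to the level of detail the paper itself supplies (it defers the full argument to \cite{Fcamq}), that part of your proposal is sound.

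There is, however, a genuine error in your justification of the multiplicity statement for $l_u=2^*$. You claim the unstable manifold of $(\boldsymbol{P^{-\infty}},0)$ in (\ref{si.naa}) is ``three-dimensional for $l_u=2^*$, giving uncountably many'' S.G.S. This is wrong: at $l=2^*$ one has $A(2^*)=n-2-2m(2^*)=0$, so the linearization of (\ref{si.na}) at $\boldsymbol{P^{-\infty}}$ has purely imaginary eigenvalues ($\boldsymbol{P^{-\infty}}$ is a center of the limiting autonomous system, cf.\ Remark \ref{criticalP}), and the only exponentially unstable direction of $(\boldsymbol{P^{-\infty}},0)$ is the $z$-direction. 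The paper states explicitly that this manifold is $1$-dimensional for $l_u\ge 2^*$ and $3$-dimensional only for $2_*<l_u<2^*$. Consequently your mechanism produces only one singular solution at $l_u=2^*$, not uncountably many. The actual source of multiplicity at criticality is the family of periodic orbits filling the region $\{H<0,\;y_1>0\}$ of the limiting autonomous system: every trajectory whose $\alpha$-limit set is one of these closed orbits stays positive and bounded away from $y_1=0$ as $s\to-\infty$ (so $U(r)r^{(n-2)/2}$ oscillates between positive bounds as $r\to0$), and under $\boldsymbol{A^-}$ the monotone decrease of $H$ traps it in $\{H<0\}$ forward in $s$ as well; this is what Lemma \ref{picture2} encodes and it is the step your argument is missing. (A minor further slip: Remark \ref{macchina} does not ``rule out persistent oscillations'' of $y_1-P_1^{+\infty}$ --- it asserts precisely that such oscillations occur, i.e.\ that differences of slow decay solutions change sign indefinitely; it is not needed to establish convergence to $\boldsymbol{P^{+\infty}}$, which follows from the Lyapunov-type behaviour of $H$ and the absence of other invariant sets in $\{H\le -c\}$.)
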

Now we give a Lemma, consequence of Propositions \ref{sub} and
\ref{super}, which allows to extend picture \ref{livelli} to the
non-autonomous setting.  Assume
$\boldsymbol{A^+},\boldsymbol{Gu},\boldsymbol{Gs}$ with $2<l_u < 2^*$
and $2_*<l_s \le 2^*$.  Follow $W^u(\tau;l_u)$ from the origin towards
$\RR^2_+:=\{(y_1,y_2) \mid y_1>0 \}$: it intersects the $y_2$ positive
semi-axis in a point, say $\boldsymbol{Q^u}(\tau)$. We denote by
$\bar{W}^u(\tau;l_u)$ the branch of $W^u(\tau;l_u)$ between the origin
and $\boldsymbol{Q^u}(\tau)$, and by $\bar{E}^u(\tau)$ the bounded set
enclosed by $\bar{W}^u(\tau;l_u)$ and the $y_2$ axis.

Similarly assume $\boldsymbol{A^-},\boldsymbol{Gu},\boldsymbol{Gs}$
with $l_u \ge 2^*$ and $l_s>2^*$.  Follow $W^s(\tau;l_s)$ from the
origin towards $y_1 \ge 0$: it intersects the $y_2$ negative semi-axis
in a point, say $\boldsymbol{Q^s}(\tau)$. We denote by
$\bar{W}^s(\tau;l_s)$ the branch of $W^s(\tau;l_s)$ between the origin
and $\boldsymbol{Q^s}(\tau)$, and by $\bar{E}^s(\tau)$ the bounded set
enclosed by $\bar{W}^u(\tau;l_u)$ and the $y_2$ axis.  Using the fact
that $H(\Q,\tau;l_u)>0>H(\R,\tau;l_u)$ for any $\Q \in W^u(\tau;l_u)$,
$\R \in W^s(\tau;l_u)$ if $\boldsymbol{A^+}$ holds and $2<l_u < 2^*$
and $2_*<l_s \le 2^*$, while $H(\Q,\tau;l_s)<0<H(\R,\tau;l_s)$ for any
$\Q \in W^u(\tau;l_s)$, $\R \in W^s(\tau;l_s)$ if $\boldsymbol{A^-}$
holds and $l_u \ge 2^*$, $l_s> 2^*$ we get the following Lemma, which
is useful to construct a new family of sub and super-solutions, see
also Remark \ref{connection}.

\begin{lemma}\label{picture}
  Assume $\boldsymbol{A^+},\boldsymbol{Gu},\boldsymbol{Gs}$ with
  $2_*<l_u < 2^*$ and $2_*<l_s \le 2^*$.  Then for any $\tau \in \RR$,
  $W^s(\tau;l_u)\subset \bar{E}^u(\tau)$; assume further $l_s<2^*$,
  then $W^s(\tau;l_u)$ is a path joining the origin and
  $\boldsymbol{y^{s}}(\tau;l_u)$.\\
  Assume $\boldsymbol{A^-},\boldsymbol{Gu},\boldsymbol{Gs}$ with $l_u
  \ge 2^*$ and $l_s>2^*$.  Then for any $\tau \in \RR$,
  $W^u(\tau;l_s)\subset \bar{E}^s(\tau)$; assume further $l_u>2^*$,
  then $W^u(\tau;l_s)$ is a path joining the origin and
  $\boldsymbol{y^{u}}(\tau;l_s)$.
\end{lemma}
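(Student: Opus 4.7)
The plan is to use the Pohozaev function $H(\cdot,\tau;l_u)$ as a topological barrier. The sign information is already supplied by the discussion preceding the lemma: under $\boldsymbol{A^+}$ with $2_* < l_u < 2^*$ and $2_* < l_s \le 2^*$ one has $H(\Q,\tau;l_u) > 0$ for every $\Q \in W^u(\tau;l_u)\setminus\{0\}$ and $H(\R,\tau;l_u) < 0$ for every $\R \in W^s(\tau;l_u)\setminus\{0\}$. I would complement this with a direct evaluation on the $y_2$-axis: since $G(0,\tau;2^*) = 0$, the definition of $H$ yields $H(0,y_2,\tau;l_u) = y_2^2/2 > 0$ for $y_2 \ne 0$. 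Together these two facts show that $H$ is strictly positive on all of $\partial \bar{E}^u(\tau) = \bar{W}^u(\tau;l_u) \cup \{(0,y_2): y_2 \text{ between } 0 \text{ and the } y_2\text{-coordinate of } \boldsymbol{Q^u}(\tau)\}$ away from the origin, while $H$ is strictly negative on $W^s(\tau;l_u)\setminus\{0\}$.

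To establish the containment $W^s(\tau;l_u) \subset \bar{E}^u(\tau)$, I would observe that the branch of $W^s(\tau;l_u)$ leaving the origin in the tangent direction $y_2 = -(n-2)y_1$ enters the interior of $\bar{E}^u(\tau)$, since locally $\bar{E}^u(\tau)$ occupies the open angular sector delimited by the tangent to $\bar{W}^u(\tau;l_u)$ at the origin (the $y_1$-axis) and by the $y_2$-axis, and the stable tangent direction lies strictly inside this sector. If $W^s(\tau;l_u)$ were to exit $\bar{E}^u(\tau)$, then by connectedness of the 1D manifold it would meet $\partial \bar{E}^u(\tau)$ at a point different from the origin, contradicting the opposite signs of $H$ recorded above. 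Part 2 is entirely symmetric: the roles of $W^u,W^s$ and of $l_u,l_s$ are swapped, $\boldsymbol{A^-}$ reverses the monotonicity of $H$ and yields $H(\cdot,\tau;l_s) < 0$ on $W^u(\tau;l_s)\setminus\{0\}$ while $H(\cdot,\tau;l_s) > 0$ on both $W^s(\tau;l_s)\setminus\{0\}$ and the non-origin part of the $y_2$-axis, so the same $H$-barrier argument yields $W^u(\tau;l_s) \subset \bar{E}^s(\tau)$.

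For the path property, assuming further $l_s < 2^*$ (respectively $l_u > 2^*$ in Part 2), I would parametrize $W^s(\tau;l_u)$ by the fast-decay parameter $\beta \in (0,+\infty)$, so that $\Q(\beta) \in W^s(\tau;l_u)$ is the value at $s = \tau$ of the trajectory corresponding to $V(\cdot,\beta)$; by the analogue of Remark~\ref{corr} for stable manifolds one has $\Q(\beta) \to 0$ as $\beta \to 0$. Since $W^s(\tau;l_u)$ lies in the compact set $\bar{E}^u(\tau)$, the points $\Q(\beta)$ admit accumulation points as $\beta \to +\infty$. Any such accumulation point is the initial datum at $s = \tau$ of a limit trajectory obtained by standard continuous dependence for \eqref{si.na}, and the associated solution $V_*$ of \eqref{radsta} is positive for every $r > 0$, cannot have fast decay (otherwise the $\beta_n$ would remain bounded) and is therefore a slow decay S.G.S. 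When $l_s < 2^*$, Proposition~\ref{sub} ensures uniqueness of such a slow decay S.G.S., namely $V(\cdot,\infty)$, so the accumulation point is forced to be $\boldsymbol{y^s}(\tau;l_u)$; uniqueness of the accumulation point then gives $\Q(\beta) \to \boldsymbol{y^s}(\tau;l_u)$, which is precisely the path statement. The symmetric argument for Part 2 relies instead on the uniqueness of the singular G.S.\ $U(\cdot,\infty)$ supplied by Proposition~\ref{super} when $l_u > 2^*$. I expect the main technical obstacle to be this connection step: one must verify that the subsequential limit $V_*$ genuinely inherits positivity on $(0,+\infty)$, fails to have fast decay, and solves \eqref{radsta} globally; this relies on combining the a priori bound furnished by the $H$-barrier with standard ODE compactness, but in the present framework all the needed ingredients are routine.
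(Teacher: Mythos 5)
Your argument is correct and follows essentially the same route as the paper: the containment $W^s(\tau;l_u)\subset \bar{E}^u(\tau)$ (resp.\ $W^u(\tau;l_s)\subset \bar{E}^s(\tau)$) is obtained exactly as in the text preceding the lemma, from the opposite signs of the Pohozaev function $H$ on the two manifolds together with its positivity on the $y_2$-axis, used as a barrier on $\partial\bar{E}^u(\tau)$. For the path property the paper merely appeals to Remark~\ref{connection} and the references therein, and your compactness-plus-uniqueness argument (accumulation points of $\boldsymbol{y^s}(\tau,\beta;l_u)$ as $\beta\to+\infty$ must be the unique slow-decay S.G.S.\ trajectory) is a legitimate way of filling in that connection step, modulo the point you yourself flag, namely excluding that the subsequential limit degenerates to the origin (i.e.\ to the null solution), which the paper likewise leaves to the cited works.
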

We emphasize that the sets $\bar{E}^u(\tau)$, $\bar{E}^s(\tau)$ have
the following property: let $\Q \in \bar{E}^u(\tau)$, $\R
\in\bar{E}^s(\tau)$, then $\boldsymbol{y}(s,\tau,\Q;l_u) \in
\bar{E}^u(t)$ for any $s \le \tau$, and $\boldsymbol{y}(s,\tau,\R;l_s)
\in \bar{E}^s(t)$ for any $s \ge \tau$.

When $l_u=l_s=2^*$ we have a slightly different situation.  Denote by
$\boldsymbol{P^*}(\tau)=(P^*_1(\tau),P^*_2(\tau))$ the critical point
of the autonomous system (\ref{si.na}) where $l=2^*$ and $g(y_1,s;
2^*)\equiv g(y_1,\tau;2^*)$. Denote by $P^*_1:= \inf \{P^*_1(\tau)
\mid \tau \in \RR \}$; in this setting we have $P^*_1>0$ and we denote
by $\bar{P}^*=P^*_1 /2$. We denote by
$\boldsymbol{\bar{P}^*}=(\bar{P}^*, -m(2^*)\bar{P}^*)$.
\begin{lemma}\label{picture2}
  Assume $\boldsymbol{Gu},\boldsymbol{Gs}$ with $l_u=l_s = 2^*$.
  Assume further $\boldsymbol{A^+}$, then for any $\tau \in \RR$ the line
  $y_1=\bar{P}^*$ intersect the manifold $W^u(\tau)$ in
  $\boldsymbol{Q^{u,+}}(\tau)=(\bar{P}^*,Q^{u,+}_2(\tau))$ and in
  $\boldsymbol{Q^{u,-}}(\tau)=(\bar{P}^*,Q^{u,-}_2(\tau))$, and it
  intersects $W^s(\tau)$ in
  $\boldsymbol{Q^{s,-}}(\tau)=(\bar{P}^*,Q^{s,-}_2(\tau))$ and
  $Q^{u,-}_2(\tau)<Q^{s,-}_2(\tau)<-m(2^*)\bar{P}^*<Q^{u,+}_2(\tau)$.
  Moreover, if $\boldsymbol{y}(s)$ corresponds to a S.G.S. with slow
  decay, there is $\Q=(Q_1,Q_2) \in W^u(\tau)$ such that
  $Q_1=y_1(\tau)$ and $Q_2>y_2(\tau)$ for any $\tau \in \RR$.

  Now, assume $\boldsymbol{A^-}$, then for any $\tau \in \RR$ the line
  $y_1=\bar{P}^*$ intersect the manifold $W^s(\tau)$ in
  $\boldsymbol{Q^{s,\pm}}(\tau)=(\bar{P}^*,Q^{s,\pm}_2(\tau))$, and
  $W^u(\tau)$ in
  $\boldsymbol{Q^{u,+}}(\tau)=(\bar{P}^*,Q^{u,+}_2(\tau))$ and
  $Q^{s,-}_2(\tau)<Q^{u,-}_2(\tau)<-m(2^*)\bar{P}^*<Q^{s,+}_2(\tau)$.
  Moreover if $\boldsymbol{y}(s)$ corresponds to a S.G.S. with slow
  decay, there is $\Q=(Q_1,Q_2) \in W^s(\tau)$ such that
  $Q_1=y_1(\tau)$ and $Q_2<y_2(\tau)$ for any $\tau \in \RR$.
\end{lemma}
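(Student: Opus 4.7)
The plan is to exploit the Pohozaev function $H(\cdot,\cdot,\tau;2^*)$ restricted to the vertical line $\{y_1=\bar P^*\}$, combined with the monotonicity identity \eqref{derivo} along trajectories of \eqref{si.na} with $l=2^*$. I will treat the $\boldsymbol{A^+}$ case; the $\boldsymbol{A^-}$ case is completely specular, swapping the roles of $W^u$ and $W^s$ and reversing the sign of $H$.

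First I would analyse the restriction $h(y_2):=H(\bar P^*,y_2,\tau;2^*)=\tfrac{y_2^{2}}{2}+\tfrac{n-2}{2}\bar P^{*}y_2+G(\bar P^*,\tau;2^*)$. This convex quadratic in $y_2$ attains its minimum precisely at $y_2=-m(2^*)\bar P^*$. Using $\boldsymbol{G0}$ (strict monotonicity of $a\mapsto g(a,\tau;2^*)/a$) together with the strict inequality $\bar P^*=P^*_1/2<P^*_1(\tau)$, one has $g(a,\tau;2^*)/a<C(2^*)=(n-2)^{2}/4$ on $(0,\bar P^*]$. Integrating yields $G(\bar P^*,\tau;2^*)<(n-2)^{2}(\bar P^*)^{2}/8$, so the minimum of $h$ is strictly negative, and $h$ therefore has two simple real zeros $y_2^-<-m(2^*)\bar P^*<y_2^+$, with $h<0$ on $(y_2^-,y_2^+)$ and $h>0$ outside.

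Next, under $\boldsymbol{A^+}$ the identity \eqref{derivo} makes $s\mapsto H(\boldsymbol y(s),s;2^*)$ non-decreasing along trajectories of \eqref{si.na}, strictly so where $G(\cdot,s;2^*)$ is strictly increasing. Since $H(\boldsymbol y(s),s;2^*)\to 0$ as $s\to-\infty$ on every trajectory issued from $W^u(\tau)$ (and similarly as $s\to+\infty$ on trajectories issued from $W^s(\tau)$), I conclude that $H(\Q,\tau;2^*)\ge 0$ on $W^u(\tau)$ (strictly off the origin) and $H(\R,\tau;2^*)\le 0$ on $W^s(\tau)$. Combined with the previous step, each intersection of $W^u(\tau)$ with $\{y_1=\bar P^*\}$ must lie outside $[y_2^-,y_2^+]$, and each intersection of $W^s(\tau)$ with it must lie inside. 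The existence of these intersections follows by topological/parametric means: $W^u(\tau)$ is tangent at the origin to the $y_1$-axis and, by Remark~\ref{corr} and Proposition~\ref{sub}, is parametrized by $\alpha\in(0,+\infty)$ corresponding to the regular solutions $U(r,\alpha)$; for small $\alpha$ the point sits above the line $y_2=-m(2^*)y_1$, while for $\alpha\to+\infty$ it lies well below this line (large initial data produce fast first zeros with strongly negative $y_2$ at $y_1=\bar P^*$, see Proposition~\ref{sub}). Connectedness then forces one transversal crossing of $\{y_1=\bar P^*\}$ strictly above $-m(2^*)\bar P^*$ (producing $\boldsymbol Q^{u,+}(\tau)$ with $Q^{u,+}_2(\tau)>y_2^+$) and one strictly below (producing $\boldsymbol Q^{u,-}(\tau)$ with $Q^{u,-}_2(\tau)<y_2^-$). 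The manifold $W^s(\tau)$ is instead tangent at the origin to $y_2=-(n-2)y_1$, steeper than $y_2=-m(2^*)y_1$; the analogous parametric argument applied to the fast-decay family from Proposition~\ref{sub} produces a crossing $\boldsymbol Q^{s,-}(\tau)$ with $Q^{s,-}_2(\tau)\in(y_2^-,-m(2^*)\bar P^*)$. The chain $Q^{u,-}_2(\tau)<Q^{s,-}_2(\tau)<-m(2^*)\bar P^*<Q^{u,+}_2(\tau)$ is then immediate.

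For the final statement, a S.G.S.\ with slow decay corresponds to a trajectory $\boldsymbol y(s)$ with $\boldsymbol y(-\infty)=\boldsymbol{P^{-\infty}}$ and $\boldsymbol y(+\infty)=\boldsymbol{P^{+\infty}}$; the quadratic argument of the first step applied at these positive critical points shows $H<0$ there, and monotonicity of $H$ along the trajectory then pins $H(\boldsymbol y(\tau),\tau;2^*)<0$ in between. The quadratic $y_2\mapsto H(y_1(\tau),y_2,\tau;2^*)$ thus has two zeros bracketing $y_2(\tau)$; continuing upwards past the larger zero one enters $\{H>0\}$ and meets $W^u(\tau)$ at a point $\Q=(y_1(\tau),Q_2)$ with $Q_2>y_2(\tau)$, as required. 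The main obstacle in this programme will be the global topological control of $W^u(\tau)$ and $W^s(\tau)$ needed in the third paragraph: in the autonomous critical case ($l=2^*$, $G$ independent of $s$) these two one-dimensional manifolds degenerate into a common homoclinic loop around $\boldsymbol{P^*}$, and under $\boldsymbol{A^+}$ one must show that this loop splits cleanly with $W^u(\tau)\subset\{H>0\}$ and $W^s(\tau)\subset\{H\le 0\}$, while still each intersecting $\{y_1=\bar P^*\}$ in the expected number of points. This Melnikov-type splitting is the delicate ingredient, and is controlled via the smooth parametrizations of Remark~\ref{corr} together with the classification of Proposition~\ref{sub}.
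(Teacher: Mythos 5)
Your framework coincides with the paper's: the Pohozaev function $H$, its monotonicity along trajectories given by \eqref{derivo} under $\boldsymbol{A^{\pm}}$, the resulting signs ($H\ge 0$ on $W^u(\tau)$, $H\le 0$ on $W^s(\tau)$ under $\boldsymbol{A^+}$), and the geometry of the restriction of $H$ to the line $y_1=\bar P^*$. Your first paragraph, computing explicitly that this line meets the level set $S(\tau)=\{H(\cdot,\cdot,\tau;2^*)=0,\ y_1>0\}$ in exactly two points straddling $y_2=-m(2^*)\bar P^*$, is a nice quantitative version of what the paper only encodes through the choice $\bar P^*=P^*_1/2$; note, however, that it uses $\boldsymbol{G0}$ for the frozen nonlinearity $g(\cdot,\tau;2^*)$ at finite $\tau$, whereas the hypotheses only impose $\boldsymbol{G0}$ on the limits $g(\cdot,\pm\infty)$ (this holds in all the motivating examples, but it is an extra assumption as written).

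The genuine gap is in your third paragraph, and you flag it yourself without closing it. The sign information $H>0$ on $W^u(\tau)\setminus\{(0,0)\}$ only constrains \emph{where} intersections with $\{y_1=\bar P^*\}$ can lie (outside $[y_2^-,y_2^+]$); it does not produce them, and your connectedness argument does not either: the claim that ``for $\alpha\to+\infty$ the point lies well below the line'' concerns the position of the point of $W^u(\tau)$ on the fixed-time slice, not the first zero of an individual trajectory, and neither Remark~\ref{corr} nor Proposition~\ref{sub} controls where the far end of the curve $W^u(\tau)$ goes. The paper closes this in three steps: (i) the first intersection $\boldsymbol{Q^{u,+}}(\tau)$ comes from a Wazewski-type phase-plane analysis (quoting \cite{Fosc}); (ii) the second intersection is obtained first only for $\tau\le -N$, where $W^u(\tau;2^*)$ is $C^1$-close to the homoclinic loop of the limiting autonomous system; (iii) it is then propagated to every $\tau$ by observing that on the semi-line $\{(\bar P^*,y_2)\mid y_2<H^-(\tau)\}$ one has $\dot y_1=m(2^*)\bar P^*+y_2<0$, so the forward evolution of the branch ending at $\boldsymbol{Q^{u,-}}(-N)$ must wrap around $S(\tau)$ and re-cross the line $y_1=\bar P^*$ from below at every later time. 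This wrapping is also what your final paragraph silently uses: to conclude that the upward vertical ray issued from a point of the S.G.S.\ trajectory meets $W^u(\tau)$, you need the branch $\bar W^u(\tau)$ between the two intersections, together with a piece of $S(\tau)$ and a vertical segment, to enclose a negatively invariant region $D^u(\tau)$ containing the set bounded by $S(\tau)$ and whose only ``upper'' boundary is $\bar W^u(\tau)$; merely entering $\{H>0\}$ does not guarantee that the ray hits the manifold.
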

\begin{proof}
  We recall that $W^u(\tau;2^*)$ and $W^s(\tau;2^*)$ depend smoothly
  on $\tau$ and that they become the graph of a homoclinic trajectory
  as $\tau \to -\infty$ and as $\tau \to +\infty$ respectively.
  Denote by
  \begin{equation*}
S(\tau):=\{(y_1,y_2) \mid H(y_1,y_2,\tau;2^*)=0 \, , \; y_1>0 \}
\end{equation*}
and by $\boldsymbol{H}^+(\tau)=(\bar{P}^*,H^+(\tau))$, and
  $\boldsymbol{H}^-(\tau)=(\bar{P}^*,H^-(\tau))$ the intersection of
  $S(\tau)$ with the line $y_1=\bar{P}^*$, where
  $H^-(\tau)<H^+(\tau)$.

  From an analysis of the phase portrait relying on Wazewski's
  principle it follows that $W^u(\tau;2^*)$ (respectively
  $W^s(\tau;2^*)$) intersects the line $y_1=\bar{P}^*$ for any $\tau
  \in\RR$, see \cite{Fosc} for a proof in the $p$-Laplace context.
  Follow $W^u(\tau;2^*)$ and $W^s(\tau; 2^*)$ from the origin towards
  $y_1>0$: we denote by $\boldsymbol{Q^{u,+}}(\tau)$ the first
  intersection of $W^u(\tau;2^*)$ (resp. of $W^s(\tau;2^*)$) with the
  line $y_1=\bar{P}^*$, and by $\boldsymbol{Q^{s,-}}(\tau)$ the first
  intersection of $W^s(\tau;2^*)$ with $y_1=\bar{P}^*$.  Using
  transversal smoothness of the manifold $W^u(\tau;2^*)$ and
  $W^s(\tau;2^*)$, see subsection 2.1, we see that we have at least a
  further intersection with such a line, respectively for $\tau<<0$
  and for $\tau>>0$.  We denote by $\boldsymbol{Q^{u,-}}(\tau_u)$ the
  second intersection of $W^u(\tau_u;2^*)$ with the line
  $y_1=\bar{P}^*$ and by $\boldsymbol{Q^{s,+}}(\tau_s)$ the second
  intersection of $W^s(\tau_s;2^*)$ with the line $y_1=\bar{P}^*$, for
  any $\tau_u\le -N$ and $\tau_s \ge N$ and $N>0$ large enough.  Set
  $\boldsymbol{Q^{u,\pm}}(\tau_u)=(\bar{P}^*,Q^{u,\pm}_2(\tau_u))$ and
  $\boldsymbol{Q^{s,\pm}}(\tau_s)=(\bar{P}^*,Q^{s,\pm}_2(\tau_s))$:
  possibly choosing a larger $N$ we can assume w.l.o.g. that
  $Q^{u,+}_2(\tau_u)>-m(2^*)\bar{P}^*>Q^{u,-}_2(\tau_u)$ and
  $Q^{s,+}_2(\tau_u)>-m(2^*)\bar{P}^*>Q^{s,-}_2(\tau_s)$.  We denote
  by $\bar{W}^u(\tau_u)$ the branch of $W^u(\tau_u;2^*)$ between the
  origin and $\boldsymbol{Q^{u,-}}(\tau_u)$ and by $\bar{W}^s(\tau_s)$
  the branch of $W^s(\tau_s;2^*)$ between the origin and
  $\boldsymbol{Q^{s,+}}(\tau_s)$.

  Assume $\boldsymbol{A^+}$; then $W^u(\tau;2^*)$ lies in the exterior
  of the bounded set enclosed by $S(\tau)$ for any $\tau$: we claim
  that $\boldsymbol{Q^{u,-}}(\tau)$ exists for any $\tau \in \RR$.  In
  fact consider the semi-line $L(\tau)=\{(P^*,y_2) \mid y_2
  <H^-(\tau)\}$; the flow of (\ref{si.na}) on $L(\tau)$ points towards
  $y_1<0$ for any $\tau \in \RR$.  Hence the trajectory
  $\boldsymbol{y}(s,-N,\boldsymbol{Q^{u,-}}(-N);2^*)$ crosses the line
  $y_1=P^*$ for $s=-N$ and then the $y_1<0$ semi-plane, and similarly
  for any $\Q \in \bar{W}^u(\tau)$ the trajectory
  $\boldsymbol{y}(s,-N,\boldsymbol{Q};2^*)$ will cross the line
  $y_1=P^*$ for a certain $s>-N$ and then the $y_1<0$ semi-plane.
  Hence, for any $\tau \ge -N$, the branch of the manifold
  $W^u(\tau;2^*)$ between the origin and
  $\boldsymbol{y}(\tau,-N,\boldsymbol{Q^{u,-}}(-N);2^*)$ will surround
  $S(\tau)$ untill it crosses a second time the line $y_1=P^*$ and the
  claim is proved, so we get picture \ref{figpicture}.
  \begin{figure}
    \centering
    \includegraphics*[totalheight=5cm]{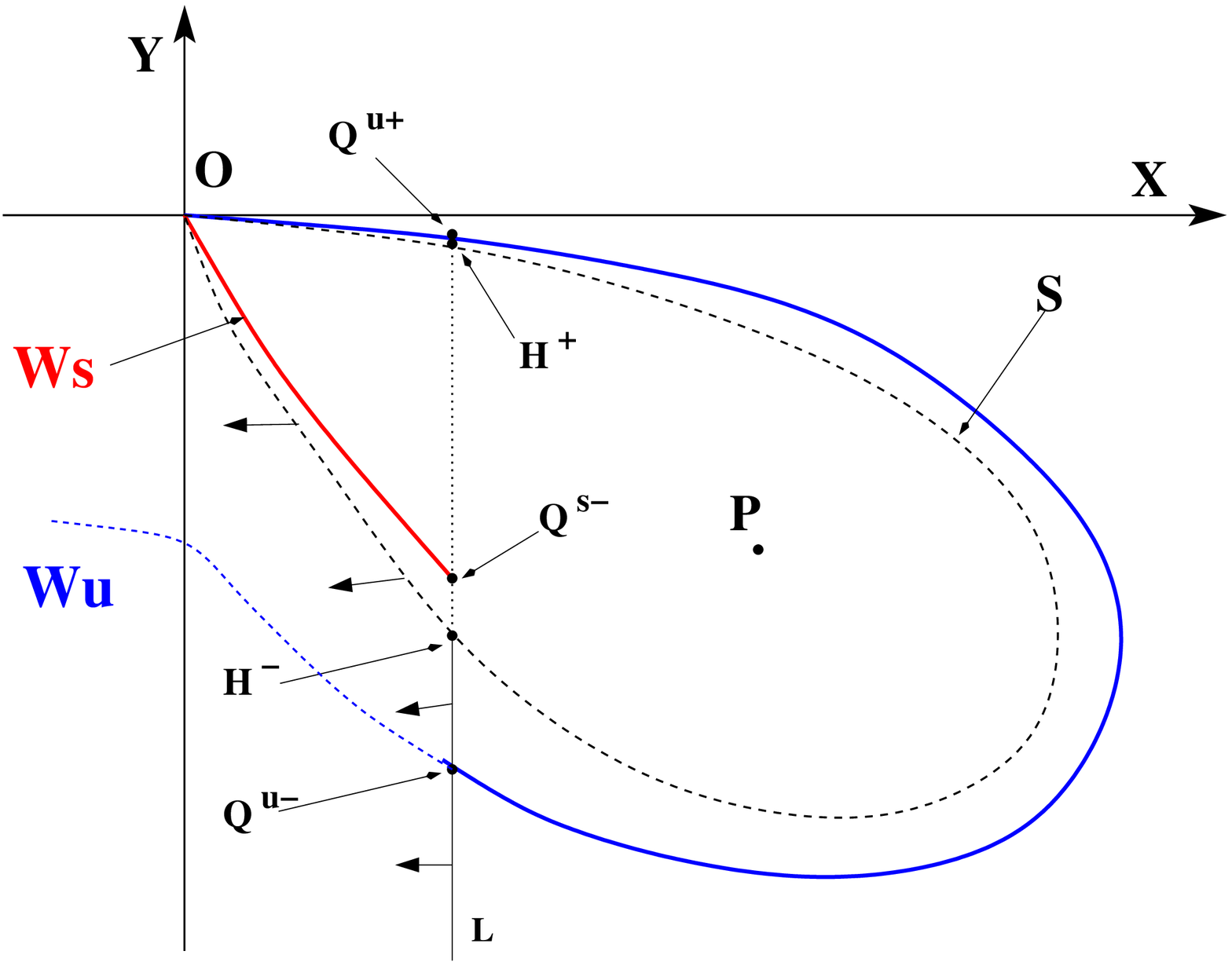}
    \caption{Sketch of the proof of Lemma \ref{picture}, when
      $\boldsymbol{A^+}$ holds.}\label{figpicture}
  \end{figure}

  Now denote by $D^u(\tau)$ the bounded set enclosed by
  $\bar{W}^u(\tau)$, the segment between $\boldsymbol{Q^{u,-}}(\tau)$
  and $\boldsymbol{H^{-}}(\tau)$ and the branch of $S(\tau)$ between
  $\boldsymbol{H^{-}}(\tau)$ and the origin: observe that by
  construction if $\Q \in D^u(\tau)$, then $\boldsymbol{y}(s,\tau,
  \Q;2^*) \in D^u(s)$ for any $s \le \tau$.  Since $S(\tau) \subset
  D^u(\tau)$ we see that if $\boldsymbol{y}(s)$ corresponds to a
  S.G.S. with slow decay,
  then   $\boldsymbol{y}(s) \in S(s)$ for any $s \in \RR$.\\
  Reasoning in the same way but reversing the direction of $s$ we see
  that if $\boldsymbol{A^-}$ holds then we can construct
  $\bar{W}^s(\tau)$ for any $\tau \in \RR$.  Denote by $D^s(\tau)$ the
  bounded set enclosed by $\bar{W}^s(\tau)$, the segment between
  $\boldsymbol{Q^{s,+}}(\tau)$ and $\boldsymbol{H^{+}}(\tau)$ and the
  branch of $S(\tau)$ between $\boldsymbol{H^{+}}(\tau)$ and the
  origin. Then if $\boldsymbol{y}(s)$ corresponds to a singular
  solution, then $\boldsymbol{y}(s) \in S(s)$ for any $s \in \RR$.  So
  Lemma \ref{picture} follows.
\end{proof}

Now we give a Lemma which is useful to detect the $\omega$-limit set
of solutions of (\ref{parab})--(\ref{data}) in the case where $\phi$
is a radial upper or lower solution of (\ref{radsta}).

\begin{lemma}\label{nosol-}
  Let $U(r)$ and $V(r)$ be positive solutions of (\ref{radsta}) either
  regular or singular and assume that there is $Z>0$ such that
  $U(Z)=V(Z)$ and $U'(Z)<V'(Z)$. Denote by
  \begin{equation}\label{defnosol}
    \zeta(r)= \left\{\begin{array}{cc}
        V(r) & \textrm{if $r \le Z$} \\
        U(r) & \textrm{if $r\ge Z$}
      \end{array}\right. \, , \qquad
    \psi(r)=  \left\{\begin{array}{cc}
        U(r) & \textrm{if $r \le Z$} \\
        V(r) & \textrm{if $r\ge Z$}
      \end{array}\right.
  \end{equation}
  Assume $\boldsymbol{A^-},\boldsymbol{Gu},\boldsymbol{Gs}$ with $l_u
  \ge 2^*$ and $l_s \in [2^*,\sigma^*)$.

  Then (\ref{radsta}) admits no solutions $\phi(r)$ either regular or
  singular such that $0<\phi(r)\le\zeta(r)$ and no solutions $\phi(r)$
  such that $\phi(r) \ge \psi(r)$ for any $r>0$.
\end{lemma}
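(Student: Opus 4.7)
The plan is to combine the classification of positive solutions in Proposition~\ref{super} with the oscillation property at infinity in Remark~\ref{macchina}, applied to the tail of any hypothetical $\phi$. I will detail the argument for the first assertion (no positive solution $\phi$ satisfies $0<\phi\le \zeta$); the second case is entirely specular.

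First I observe that under $\boldsymbol{A^-}$, $\boldsymbol{Gu}$, $\boldsymbol{Gs}$ with $l_u\ge 2^\ast$ and $l_s\in[2^\ast,\sigma^\ast)$, Proposition~\ref{super} forces every positive solution of (\ref{radsta})---regular or singular---to be a slow decay solution: fast decay solutions $V(r,\beta)$ have a positive non-degenerate zero, hence are not positive everywhere, and the only positive singular solutions are S.G.S.\ with slow decay. Consequently $U$, $V$ and any candidate $\phi$ are slow decay solutions, so Remark~\ref{macchina} (which is available since $l_s\in[2^\ast,\sigma^\ast)$) applies to any pair among them. Since $\zeta(r)=U(r)$ for $r\ge Z$, the assumption $\phi\le \zeta$ yields $\phi-U\le 0$ on $[Z,+\infty)$. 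If $\phi\not\equiv U$, then being two distinct slow decay solutions Remark~\ref{macchina} forces $\phi-U$ to change sign infinitely many times as $r\to+\infty$, contradicting $\phi-U\le 0$ there. Hence $\phi\equiv U$ on $[Z,+\infty)$, and by standard uniqueness for the Cauchy problem associated to (\ref{radsta}) with (regular) data at $r=Z>0$---an immediate consequence of the local Lipschitz regularity of $f$ granted by $\boldsymbol{F0}$---the identification propagates to $\phi\equiv U$ on all of $(0,+\infty)$.

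I close the contradiction by looking on the other side of $Z$: a first order Taylor expansion at $r=Z$ gives $U(r)-V(r)\sim (U'(Z)-V'(Z))(r-Z)>0$ in a left neighborhood of $Z$, since both factors are negative. But $\zeta(r)=V(r)$ on $(0,Z]$, so $\phi\le \zeta$ combined with $\phi\equiv U$ would give $U\le V$ on this neighborhood, a contradiction. The argument for $\phi\ge \psi$ is entirely parallel after exchanging the roles of $U$ and $V$ at infinity: one obtains $\phi-V\ge 0$ on $[Z,+\infty)$, hence $\phi\equiv V$ by Remark~\ref{macchina} and uniqueness, and then $V\ge U$ in a left neighborhood of $Z$, against the same Taylor expansion. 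The only potentially subtle point is verifying that every positive solution in play is slow decay, so that Remark~\ref{macchina} may be invoked; this is precisely the content of Proposition~\ref{super} under the present hypotheses, and I foresee no further obstacle.
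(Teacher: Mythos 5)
Your proof is correct and takes essentially the same route as the paper's: Proposition~\ref{super} to force every positive solution (regular or singular) to have slow decay, and Remark~\ref{macchina} to rule out a one-signed difference at infinity. The paper compresses the conclusion into ``the Lemma easily follows,'' whereas you make explicit the remaining steps (ODE uniqueness at $r=Z$ and the first-order expansion showing $U>V$ to the left of $Z$); these are correct and faithful to the intended argument.
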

\begin{proof}
  From Proposition \ref{super} we know that all the positive solutions
  have slow decay.  Assume first $l_s \in [2^*,\sigma^*)$. Then from
  Remark \ref{macchina} all the slow decay solutions of (\ref{radsta})
  cross each other indefinitely as $r\to +\infty$, so the Lemma easily
  follows.
\end{proof}
Reasoning in the same way we get the following:
\begin{lemma}\label{nosol+}
  Let $U(r)$, $V(r)$, $\zeta(r)$ and $\psi(r)$ be as in
  Lemma~\ref{nosol-}.  Assume
  $\boldsymbol{A^+},\boldsymbol{Gu},\boldsymbol{Gs}$ with $l_u \in
  (\sigma_*,2^*]$ and $l_s \in (2_*;2^*]$.

  Then (\ref{radsta}) admits no solutions $\phi(r)$ either regular or
  singular such that $0<\phi(r)\le\zeta(r)$ and no solutions $\phi(r)$
  such that $\phi(r) \ge \psi(r)$ for any $r>0$.
\end{lemma}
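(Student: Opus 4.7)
The plan is to mirror the argument of Lemma~\ref{nosol-}, exchanging the asymptotic regime $r\to +\infty$ with $r\to 0$ and replacing the role of slow-decay solutions by that of singular solutions. The hypotheses $\boldsymbol{A^+}$ with $l_u\in(\sigma_*,2^*]$ and $l_s\in(2_*,2^*]$ are the ``specular'' counterparts of $\boldsymbol{A^-}$ with $l_u\ge 2^*$ and $l_s\in[2^*,\sigma^*)$, so essentially the same structure should work, read at the origin instead of at infinity.

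The first step is to apply Proposition~\ref{sub}, which under $\boldsymbol{A^+},\boldsymbol{Gu},\boldsymbol{Gs}$ with $l_u,l_s\in(2_*,2^*]$ states that every regular solution of (\ref{radsta}) has a non-degenerate zero. Hence no G.S.\ exists, so any positive solution of (\ref{radsta}) defined for all $r>0$ is forced to be an S.G.S., i.e.\ singular at the origin. In particular $U$, $V$, and any hypothetical $\phi$ as in the statement must all be S.G.S. The second step is to invoke Remark~\ref{macchina}, which under $l_u\in(\sigma_*,2^*]$ asserts that if $\bar V,\tilde V$ are two distinct singular solutions of (\ref{radsta}), then $\bar V(r)-\tilde V(r)$ changes sign on every neighborhood of $r=0$.

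Combining these two facts yields the contradiction. Suppose a positive solution $\phi$ satisfies $0<\phi(r)\le\zeta(r)$ for all $r>0$; since $\zeta=V$ on $(0,Z]$, this forces $\phi\le V$ there. If $\phi\not\equiv V$, then $\phi$ and $V$ are distinct S.G.S., and Remark~\ref{macchina} produces a sequence $r_n\to 0^+$ with $\phi(r_n)>V(r_n)$, a contradiction. The symmetric statement for $\psi$ follows by the same reasoning applied to the pair $(\phi,U)$ on $(0,Z]$, where now Remark~\ref{macchina} forces $\phi(r_n)<U(r_n)$ along some sequence $r_n\to 0^+$, contradicting $\phi\ge\psi=U$ there.

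The only (minor) obstacle is the degenerate case where $\phi$ coincides identically with one of $U$ or $V$, in which Remark~\ref{macchina} cannot be invoked because it requires \emph{two distinct} singular solutions. This is ruled out by the matching conditions at $r=Z$: if $\phi\equiv V$, then just past $Z$ one has $\phi(r)=V(r)>U(r)=\zeta(r)$ thanks to $V(Z)=U(Z)$ and $V'(Z)>U'(Z)$, violating $\phi\le\zeta$; the case $\phi\equiv U$ is excluded analogously in the argument for $\psi$.
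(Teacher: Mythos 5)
Your proof is correct and follows exactly the route the paper intends: the paper justifies Lemma~\ref{nosol+} only with ``reasoning in the same way'' as Lemma~\ref{nosol-}, i.e.\ Proposition~\ref{sub} forces every everywhere-positive solution to be an S.G.S., and the singular-solution half of Remark~\ref{macchina} (valid precisely for $l_u\in(\sigma_*,2^*]$) supplies the infinitely many sign changes as $r\to 0$ that contradict $\phi\le\zeta$ on $(0,Z]$ or $\phi\ge\psi$ there. Your explicit exclusion of the degenerate cases $\phi\equiv V$ and $\phi\equiv U$ via the corner condition $U(Z)=V(Z)$, $U'(Z)<V'(Z)$ is a detail the paper leaves implicit (``the Lemma easily follows''), and you handle it correctly.
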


\section{Local existence} \label{sec:local-existence}  In this section
we introduce some basic facts and definitions related to the problem
\eqref{parab}--\eqref{data}, and exploiting techniques similar to
those used in
\cite[\S 1, \S 2]{W} (see also \cite[Ch. II]{QS}),
we prove local existence for the solutions of problem
\eqref{parab}--\eqref{data}.  For the remainder of this section we will
make the following assumptions, in addition to $\mathbf{F0}$, on the potential $f$ in
\eqref{parab}, i.e.:
\begin{itemize}
\item[$\boldsymbol{Fu}$:] $\boldsymbol{Gu}$ holds and there is $D>0$
  such that $\frac{\partial}{\partial y_1}[g(y_1,s;l_u)] \le D
  |y_1|^{\delta}$ for any $0 \le y_1 \le 1$, and $0< s \le 1$.\\[-0.3 cm]
\item[$\boldsymbol{Fs}$:] There are $\ell \ge 0$, $\bar{C}>0$,
  $\delta>0$, $\ep>0$ such that $|f(u,r)-f(u+h,r)| \le \bar{C} h
  u^{\delta} r^{\ell}$ whenever $r \ge 1$, $0 \le u \le 1$, and $0 \le
  h \le \ep$.
\end{itemize}
Assumption $\boldsymbol{Fu}$ is very close to $\boldsymbol{Gu}$ (and
it is actually satisfied in all the motivating examples given in the
introduction), while $\boldsymbol{Fs}$ is more standard and it is
adapted from \cite{W}.  Let us introduce the following map
\begin{equation} \label{eq:weight-w}
  w(x)= \left\{ \begin{array}{ll} |x|^{\nu} & \textrm{ if $|x|\le 1$} \\
      |x|^{\ell/\delta} & \textrm{ if $|x|\ge 1$}
    \end{array} \right. \textrm{ where } 0 \le \nu <m(l_u).
\end{equation}
We emphasize that if $\ell=0$ then $w(x) \equiv 1$ for $|x| \ge 1$.
Moreover if we set $\nu=0$ then $w(x) \equiv 1$ for $|x| \le 1$ so we
are dealing with bounded solutions, while if we set $\nu>0$ we can
deal with solutions which are unbounded for $|x|$ small and are not
defined for $x=0$.

Let us recall the definitions of \emph{continuous weak solution} and
$C_B$-\emph{mild solution} to the problem \eqref{parab}--\eqref{data}.

\begin{defin} \label{def:CW-sol} We say that a function $u$ is a
  continuous weak (c.w.) solution of \eqref{parab}--\eqref{data} if
  $u$ is continuous and it is a distributional solution: i.e.  if
  $u(x, 0) = \phi(x)$ and, for any $\eta \in C^{2,1}(\RR^n \times [0,
  T])$ with $\eta \geq 0$ and $\supp \eta(\cdot, t) \Subset \RR^n$ for
  all $t\in [0, T]$, it holds true that
  \begin{equation} \label{eq:weak-sol} \int_{\RR^n} \! u (x, s) \eta
    (x, s) dx\vert_0^{T_1}\! =\! \int_0^{T_1}\!\! \int_{\RR^n}\!\!
    \Big[ u (x, s) \big(\eta_t + \Delta \eta\big) (x, s )+ f(u, |x|)
    \eta(x, s) \Big] dx ds
  \end{equation}
  if $T_1\in [0, T]$. Further, u is a c.w. lower (respectively upper)
  solution of \eqref{parab}--\eqref{data} if $u(x, 0) \geq \phi(x)$
  (resp. $u(x, 0) \leq \phi(x)$) and we replace $``="$ in
  \eqref{eq:weak-sol} by $``\geq"$ (resp. by $``\leq"$). We call a
  function $u$ a classical solution if it satisfies
  \eqref{parab}--\eqref{data} and $u \in C^{2,1}(\RR^n \times (0, T))
  \cap C(\RR^n \times [0, T])$.
\end{defin}

Let $\phi\in C_B(\RR^n):=C(\RR^n)\cap L^\infty(\RR^n)$.  We introduce
the following operators
\begin{equation}\label{defF}
  \begin{split}
    & e^{t\Delta} \phi := (4\pi t)^{-n/2} \int_{\RR^n} \exp\Big(
    -\frac{ |x - y|^2}{4t}
    \Big) \phi(y) dy. \\
    & F_{\phi}(u)= \Big(e^{t\Delta}\phi + \int_0^t e^{(t-s)\Delta}
    f(u(\cdot, s), |\cdot |) ds\Big)(x)
  \end{split}
\end{equation}
\begin{defin} \label{def:CB-sol} We say that u is a $C_B$-mild
  solution of \eqref{parab}--\eqref{data} on $\RR^n \times [0, T)$ if
  \begin{align}
    & u \in C_B(\RR^n\times [0, T']) := C(\RR^n\times [0, T'])\cap
    L^\infty(\RR^n\times[0, T']), \,\,
    0 < T' < T, \label{eq:regularity}\\
    &u(x, t) = F_{\phi}(u(x,t)),\, \textrm{for}\, (x, t) \in \RR^n
    \times [0, T). \label{eq:mild-sol}
  \end{align}
  Also, we say that $u$ is a $C_B$-mild lower solution (or upper)
  solution if $``="$ in \eqref{eq:mild-sol} is replaced by $``\geq"$
  ($``\leq"$ respectively).
\end{defin}

We also consider the norm $\|\phi\|_X:=\|\phi w\|_{L^\infty(\RR^n)}$
and the weighted space $X = L^\infty_w(\RR^n) = \{\phi \mid \|\phi
\|_{X}< +\infty\}$ where $w$ is defined in \eqref{eq:weight-w}. We
denote by $ C_S(\RR^n\times [0, T']): =C((\RR^n\backslash\
\{0\})\times [0, T'])\cap L^\infty([0, T'], X)$ and we give the
following definiton

\begin{defin} \label{def:CS-sol} We say that $u$ is a $C_S$-mild
  solution to \eqref{parab}--\eqref{data} on $ (\RR^n\backslash\
  \{0\})\times [0, T)$ if $u \in C_S(\RR^n\times [0, T'])$ for any
  $0<T'<T$ and satisfies $u(x, t) = F_{\phi}(u(x,t))$ for $(x, t) \in
  (\RR^n\backslash\ \{0\})\times [0, T)$.
\end{defin}
Note that if $\nu>0$ and $u$ is a $C_S$-mild solution then it may be
unbounded as $x \to 0$.  Therefore we can deal with initial data and
solution having a singularity in the origin, and we will prove local
existence and uniqueness for such initial data.  It is worth recalling
that with our assumptions we have singular stationary solutions
$\phi_S$ which behave like $|x|^{-m(l_u)}$ as $x \to 0$.

 An
interesting question, still open even for the starting case
$f(u)=u^{q-1}$, is whether stationary S.G.S. are stable or not. One of
the difficulties is in fact to prove local existence and uniqueness
for nearby initial data (which is in general violated but maybe
recovered in some special space and for some parameters, see
\cite{SY5}).  In fact we cannot even hope for a general local
uniqueness result: We need to prescribe a class of function within
local uniqueness is recovered, due to the presence of self-similar
solutions converging to singular data (see, e.g. \cite{SW,QS}), and to a
new class of solutions with moving singularity recently described in
\cite{SY1,SY4}.

 We stress that here we are forced to stay below
$\phi_S$ since we need to require $\nu<m(l_u)$, so we cannot start a
stability analysis for stationary singular solutions.

The following result is a direct consequence of \cite[Lemma 1.5]{W}.
\begin{lemma} \label{lem:lemma1.5} Assume $\boldsymbol{Fu}$,
  $\boldsymbol{Fs}$.  Let $u$ be a continuous weak upper (lower)
  solution of (\ref{parab})--(\ref{data}) with $n\geq 3$.  Assume that
  there exist $k, \beta > 0$, and $0 < \alpha < 2$ such that $f(u,x) <
  k \exp(\beta |x|^\alpha{})$ on $\RR^n \times [0, T]$. Then $u(x,t)
  \geq (\leq) F_{\phi}(u(x,t)) $ on $\RR^n\times [0,T]$.
\end{lemma}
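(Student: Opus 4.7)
The plan is to use the heat kernel itself---suitably cut off and regularized---as a test function in the weak formulation \eqref{eq:weak-sol}. Fix $(y,t)\in\RR^n\times (0,T]$, and for $\epsilon>0$, $R>0$ set
\[
\eta_{R,\epsilon}(x,s):=G(x-y,t-s+\epsilon)\,\chi_R(x),
\]
where $G(z,\tau)=(4\pi\tau)^{-n/2}\exp(-|z|^2/(4\tau))$ is the standard heat kernel and $\chi_R\in C_c^\infty(\RR^n)$ is a smooth cutoff equal to $1$ on $B_R$, supported in $B_{2R}$, with $|\nabla\chi_R|\leq C/R$ and $|\Delta\chi_R|\leq C/R^2$. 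Then $\eta_{R,\epsilon}$ is a nonnegative, $C^{2,1}$, compactly-supported-in-$x$ admissible test function.

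Since $\partial_s G(x-y,t-s+\epsilon)=-\Delta_x G(x-y,t-s+\epsilon)$, a direct computation gives
\[
(\eta_s+\Delta\eta)(x,s)=2\,\nabla\chi_R(x)\cdot\nabla_x G(x-y,t-s+\epsilon)\,+\,G(x-y,t-s+\epsilon)\,\Delta\chi_R(x),
\]
which is supported in the annulus $R\leq|x|\leq 2R$. Inserting $\eta_{R,\epsilon}$ into \eqref{eq:weak-sol} with $T_1=t$ and rearranging, I would pass to the limit first $R\to+\infty$ and then $\epsilon\to 0^+$. As $\epsilon\to 0^+$, the boundary term at $s=t$, namely $\int u(x,t)G(x-y,\epsilon)\chi_R(x)\,dx$, converges to $u(y,t)$ by continuity of $u$ and the approximate-identity property of $G(\cdot,\epsilon)$; the boundary term at $s=0$ yields $(\eu^{t\Delta}\phi)(y)$; and the source term gives $\int_0^t\bigl(\eu^{(t-s)\Delta}f(u(\cdot,s),|\cdot|)\bigr)(y)\,ds$. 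Passing through the super-solution inequality in \eqref{eq:weak-sol} preserves the sign and yields $u(y,t)\geq F_\phi(u)(y,t)$; the lower-solution case is identical with the reverse inequality.

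The main obstacle is controlling the annular error terms uniformly as $R\to+\infty$. This is precisely where the growth hypothesis $f(u,x)<k\exp(\beta|x|^\alpha)$ with $\alpha<2$ enters: the Gaussian factor in $G(x-y,\tau)$ decays like $\exp(-|x|^2/(4\tau))$, which for $|x|\sim R$ large dominates $\exp(\beta|x|^\alpha)$ whenever $\alpha<2$ (on any fixed time interval $[0,T]$). A dominated-convergence argument then kills the annulus contribution from $G\,\Delta\chi_R$ (bounded by $C/R^2$) and from $\nabla\chi_R\cdot\nabla_x G$ (bounded by $C/R$ times a Gaussian). The term involving $u$ itself in the annulus requires a companion bound on $u(x,s)$ for $|x|$ large; this is obtained from the hypothesis that $u$ is a continuous weak super/sub-solution with $\phi\in L^\infty_w$ by a standard heat-kernel comparison on bounded time intervals, again using $\alpha<2$.

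This threshold $\alpha<2$ is of Tychonoff type, and the whole scheme is a direct transcription of \cite[Lemma~1.5]{W} to the present spatially-dependent nonlinearity $f(u,|x|)$; no new idea is required and the justification of every passage to the limit follows the same lines as in that reference, so the proof can indeed be reduced to a citation of \cite[Lemma~1.5]{W} once one observes that the hypotheses $\boldsymbol{Fu}$, $\boldsymbol{Fs}$ ensure the needed local Lipschitz and growth properties of $f(u,|x|)$.
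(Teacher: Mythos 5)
Your proposal is correct and follows essentially the same route as the paper: the paper proves this lemma simply by invoking and adapting \cite[Lemma~1.5]{W}, and your duality argument with the truncated backward heat kernel (using the Tychonoff-type condition $\alpha<2$ to kill the annular error terms) is exactly the adaptation the authors have in mind.
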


\begin{remark}
  By this lemma it follows that a c.w. solution of
  \eqref{parab}--\eqref{data} satisfying either \eqref{eq:regularity}
  in Definition \ref{def:CB-sol} or the analogous weighted condition
  in Definition \ref{def:CS-sol} is also respectively either a
  $C_B$-mild solution or a $C_S$ solution.  The converse is also true
  (see the proof of \cite[Lemma1.5]{W}).
\end{remark}
To prove Lemma \ref{lem:lemma1.5} it is sufficient to adapt the proof
of \cite[Lemma~1.5]{W} to the present case.  By Lemma
\ref{lem:lemma1.5} we also have the next result.
\begin{prop} \label{W-lemma 2.2} Assume $\boldsymbol{Fu}$,
  $\boldsymbol{Fs}$.  Suppose that $u$ is a continuous weak upper
  (resp. lower) solution of (\ref{laplace}) in $\RR^n\backslash \{0
  \}$ such that $\|u\|_X$ is bounded, then $u$ is a $C_S$-mild upper
  (resp. lower) solution of \eqref{parab}--\eqref{data}. The converse
  is also true, provided $\phi(x) \geq u(x, 0)$ (resp. $\phi(x)\leq
  u(x, 0)$). In particular if we set $\nu=0$ we see that a continuous
  bounded weak upper (resp. lower) solution of (\ref{laplace}) is a
  $C_B$-mild solution of \eqref{parab}--\eqref{data} and viceversa
\end{prop}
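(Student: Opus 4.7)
The plan is to mirror the strategy of \cite[Lemma~2.2]{W}, with adaptations for the weighted space $X$ and the possible singularity at the origin. I would first prove the direct implication: given a continuous weak upper solution $u$ of \eqref{laplace} on $\RR^n \setminus \{0\}$ with $\|u\|_X < \infty$, set $\tilde u(x,t):= u(x)$; since $\partial_t \tilde u \equiv 0$, for any non-negative test function $\eta$ compactly supported in $(\RR^n \setminus \{0\}) \times [0, T]$, Fubini and the elliptic weak inequality applied slice-wise to $\eta(\cdot, s)$ (a valid spatial test function for each fixed $s$) immediately yield the parabolic weak upper-solution inequality \eqref{eq:weak-sol}.

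Next I would extend this inequality to test functions whose support may touch the origin, by introducing a radial cutoff $\chi_\epsilon$ vanishing on $\{|x|\le \epsilon\}$ and equal to $1$ on $\{|x|\ge 2\epsilon\}$. The weighted bound $u(x) \le \|u\|_X |x|^{-\nu}$ with $\nu < m(l_u)$, combined with $\boldsymbol{F0}$ (which gives $f(u,r) r^2 \le C(u)$ near the origin), ensures that $u$ and $f(u,|x|)$ are locally integrable near $\{x=0\}$; hence the annular contributions coming from $\nabla \chi_\epsilon$ and $\Delta \chi_\epsilon$ on $\{\epsilon \le |x|\le 2\epsilon\}$ vanish as $\epsilon \to 0$ and the inequality extends to all test functions as in Definition~\ref{def:CW-sol}.

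Once $\tilde u$ is known to be a continuous weak upper solution of \eqref{parab}--\eqref{data} (with $\phi \ge u(x,0)=u(x)$), I would invoke Lemma~\ref{lem:lemma1.5} to conclude $\tilde u \ge F_\phi(\tilde u)$ pointwise, i.e.\ that $\tilde u$ is a $C_S$-mild upper solution. The growth hypothesis $f(u(x,t),|x|) < k\exp(\beta|x|^\alpha)$ required by that lemma would be verified from the weighted bound: for $|x|\ge 1$ we have $u(x)\le \|u\|_X |x|^{-\ell/\delta}$, so $u$ is bounded; applying $\boldsymbol{Fs}$ telescopically from $0$ to this bound (using $f(0,\cdot)\equiv 0$) produces a polynomial bound $f(u,|x|) \le C|x|^\ell$, absorbed into $k\exp(\beta|x|^\alpha)$ for any $0<\alpha<2$. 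For $|x|$ in a bounded annulus away from $0$ the bound is automatic from $\boldsymbol{F0}$, and the integrability near the origin discussed above handles the remainder. The regularity $\tilde u \in C_S(\RR^n\times [0,T'])$ is built into the hypothesis $\|u\|_X<\infty$ together with continuity on $\RR^n\setminus\{0\}$. The converse is obtained by running Lemma~\ref{lem:lemma1.5} in reverse: a $C_S$-mild upper solution with $\phi\ge u(x,0)$ is automatically a continuous weak upper solution of \eqref{parab}--\eqref{data}, and testing \eqref{eq:weak-sol} against $\eta(x,s)=\varphi(x)\psi(s)$ with $\psi$ a localizing bump in $s$ and $\varphi$ supported in $\RR^n\setminus\{0\}$ recovers the elliptic weak inequality thanks to the time-independence of $\tilde u$.

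The main obstacle will be Step~2, i.e.\ justifying the passage of the weak inequality across the singularity at the origin while preserving the inequality in the $C_S$ setting. This relies crucially on the strict inequality $\nu < m(l_u)$ (the critical case $\nu=m(l_u)$ is precisely the borderline where stationary singular solutions $\phi_S$ live, and is excluded from the framework) and on the control $f(u,r)r^2 \le C(u)$ from $\boldsymbol{F0}$, which together guarantee the needed local integrability and vanishing of the cutoff remainders. In the special case $\nu=0$ (bounded solutions with $w\equiv 1$ on the unit ball) the argument reduces verbatim to the bounded setting of \cite[Lemma~2.2]{W}, yielding the $C_B$ statement as a direct corollary.
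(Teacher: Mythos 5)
Your proposal follows essentially the same route as the paper, which obtains this proposition directly from Lemma~\ref{lem:lemma1.5} (the adaptation of Wang's Lemma~1.5) after viewing the stationary upper/lower solution as a time-independent continuous weak upper/lower solution of the parabolic problem, exactly as in your slice-wise plus cutoff argument. One small caution: near the origin the bound $f(u,r)r^2 \le C(u)$ from $\boldsymbol{F0}$ is not by itself sufficient when $u(x)\sim \|u\|_X\,|x|^{-\nu}$ is unbounded (the constant $C(u)$ degenerates as $u\to\infty$); the local integrability of $f(u(x),|x|)$ and the vanishing of the cutoff remainders should instead be read off from $\boldsymbol{Fu}$ as in the proof of Lemma~\ref{lemma-M}, which yields $f(u,|x|)\le k^-|x|^{\delta(m(l_u)-\nu)-2-\nu}$ with exponent strictly greater than $-n$.
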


Take $\rho>0$ and $\phi \in X$, and denote by
$B_\rho:=B_\rho^T(0)\subseteq L^\infty([0, T]; X)$ the ball of center
$0$ and radius $\rho$, in $X$; the radius $\rho$ will be chosen
properly later.  We now prove local existence and uniqueness for $C_B$
and $C_S$-mild solutions.
\begin{lemma}\label{lemma-M}
  Assume $\boldsymbol{Fu}$, $\boldsymbol{Fs}$.  If the initial datum
  $\phi\in X$, there is $T_\phi > 0$ such that the operator
  $F_{\phi}(u)$ defined by (\ref{defF}) has a unique fixed point in
  $B_{\rho}^T(\phi)$ for any $0<T<T_\phi$, and if $T_\phi < +\infty$,
  then $\lim_{t\to T^-_\phi} \|u(\cdot, t)\|_{X} = +\infty$.
\end{lemma}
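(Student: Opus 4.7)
The strategy is to apply Banach's contraction principle to $F_{\phi}$ on the closed ball $B_{\rho}^{T}(\phi)\subset L^\infty([0,T];X)$ for $T$ sufficiently small and $\rho$ slightly larger than $\|\phi\|_{X}$, and then to extract the blow-up alternative through a standard maximality argument. I will treat the $C_B$-case ($\nu=0$, $\ell=0$) as a special instance of the weighted $C_S$-case.

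First I would control the free evolution $e^{t\Delta}\phi$ in the weighted norm $\|\cdot\|_{X}$. A direct estimate with the Gaussian kernel, splitting the integral over $\{|y|\le 1\}$ and $\{|y|\ge 1\}$ and using the piecewise definition of $w$ in \eqref{eq:weight-w}, yields $\|e^{t\Delta}\phi\|_{X}\le C\|\phi\|_{X}$ uniformly for bounded $t$, and the continuity of $t\mapsto e^{t\Delta}\phi$ into $X$ on $(0,T]$. All that is needed here is the comparability of $w(y)$ and $w(x)$ at Gaussian-relevant scales on each region, which is straightforward from the homogeneity of the two pieces of $w$.

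The main step is the nonlinear estimate for the Duhamel integral $N(u)(x,t):=\int_0^t (e^{(t-s)\Delta}f(u(\cdot,s),|\cdot|))(x)\,ds$, namely
\[
\|N(u)-N(v)\|_{L^\infty([0,T];X)}\le \eta(T)\,\|u-v\|_{L^\infty([0,T];X)}
\]
for $u,v\in B_{\rho}^{T}(\phi)$ with $\eta(T)\to 0$ as $T\to 0^+$. I would split $\RR^n$ into $\{|y|\le 1\}$ and $\{|y|\ge 1\}$. For $|y|\le 1$, inverting the Fowler substitution \eqref{transf1} converts $\boldsymbol{Fu}$ into a local bound of the type $|f(u,r)-f(v,r)|\le C\,|u-v|\,(u^{\delta}+v^{\delta})\,r^{m(l_u)\delta-2}$, valid for small $u,v$ and small $r$; the choice $\nu<m(l_u)$ is exactly what makes the weighted convolution against the heat kernel integrable near the origin and produces a positive power of $T$ after the time integration. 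For $|y|\ge 1$, hypothesis $\boldsymbol{Fs}$ applies directly, and the weight $|y|^{\ell/\delta}$ is engineered precisely to absorb the growth factor $r^{\ell}$ against $u^{\delta}$. Summing the two contributions gives the desired $\eta(T)$.

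Combining these estimates, for $\rho$ slightly larger than $\|\phi\|_{X}$ and $T=T(\|\phi\|_{X})>0$ small, $F_{\phi}$ maps $B_{\rho}^{T}(\phi)$ into itself and is a strict contraction, so Banach's theorem yields the unique fixed point. Iterating this construction from data $u(\cdot,T')$ defines a maximal existence time $T_{\phi}$. The blow-up alternative is then standard: since the existence time in the contraction scheme depends only on the $X$-norm of the initial datum, a uniform bound $\|u(\cdot,t_n)\|_{X}\le M$ along a sequence $t_n\uparrow T_{\phi}<\infty$ would allow one to extend the solution past $T_{\phi}$, contradicting maximality; hence $\|u(\cdot,t)\|_{X}\to+\infty$ as $t\to T_{\phi}^{-}$. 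I expect the weighted nonlinear estimate near the origin to be the main obstacle: pairing the allowed singularity of $u$ at $x=0$ (governed by $\nu<m(l_u)$) with the pointwise growth of $f(u,r)$ encoded in $\boldsymbol{Fu}$ is delicate and is the reason for the specific form of the weight \eqref{eq:weight-w} and for the strict inequality $\nu<m(l_u)$.
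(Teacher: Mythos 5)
Your proposal is correct and follows essentially the same route as the paper's proof: a Banach fixed-point argument in the weighted space $L^\infty([0,T];X)$, with the Duhamel term estimated by splitting over $\{|y|\le 1\}$ (where $\boldsymbol{Fu}$ is converted via the Fowler substitution into a pointwise Lipschitz bound of order $r^{\delta(m(l_u)-\nu)-2-\nu}$, integrable against the heat kernel precisely because $\nu<m(l_u)$) and $\{|y|\ge 1\}$ (where $\boldsymbol{Fs}$ applies directly), followed by the standard ladder/maximality argument for the blow-up alternative. The only cosmetic discrepancy is the choice of radius: since the free-evolution bound in the weighted norm reads $\|e^{t\Delta}\phi\|_X\le C\|\phi\|_X$ with a constant $C>1$, one must take $\rho$ a fixed multiple of $\|\phi\|_X$ (the paper uses $\rho=2(2D_1+2^{\nu+1}+2^{\ell/\delta})\|\phi\|_X$) rather than ``slightly larger than $\|\phi\|_X$''; this does not affect the argument.
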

\begin{proof}
  We claim that the operator $F_{\phi}$ maps $B_\rho$ in itself and it
  is a contraction: Then the Banach fixed point theorem provides
  existence and uniqueness of a fixed point $u$ for $F_{\phi}$.

  Observe first that if $u \in B_{\rho}$, then $|u(x,t;\phi)| \le \rho
  w(x)^{-1}$ a.e. in $\RR^n \times [0,T]$. Here, we take $\rho =
  2(2D_1+2^{\nu+1}+2^{\ell/\delta})\|\phi\|_X $, where
  $D_1:=\eu^{-\nu/2}(16\nu)^{\nu/2}$. 

  From relation $\boldsymbol{Fu}$, for $|x| \le1$, we get
  \begin{align} \label{stimeFu-1} & \begin{aligned} f(u,|x|) &\le
      f(\rho\omega^{-1}(x), |x|) \le g(\rho
      |x|^{m(l_u)-\nu},\ln(|x|);l_u)
      |x|^{-(2+m(l_u))} \\
      & \le k^- |x|^{\delta(m(l_u)-\nu)-2-\nu},
    \end{aligned}\\
    \intertext{where $k^-= D \rho^{1+\delta}$. Then, for any $v \in
      B_\rho$, we also have} & \label{stimeFu-2}
    \begin{aligned}
      |f( & u,|x|)- f(v,|x|)| =
      \left|\int_0^1 \frac{\partial f}{\partial u}  (s u +(1-s) v),|x|)   [u -v] ds \right|  \\
      & \leq \frac{\partial g}{\partial
        y_1}\big(\rho|x|^{m(l_u)-\nu},\ln(|x|);l_u\big)
      |x|^{-(2+m(l_u))} | (u-v)(x)|\, |x|^{m(l_u)} \\
      & \leq \frac{\partial g}{\partial
        y_1}\big(\rho|x|^{m(l_u)-\nu},\ln(|x|);l_u\big)
      |x|^{-(2+m(l_u))} \| (u-v)(x)| \,|x|^\nu\|_{\infty} \\
      & \le k^- |x|^{\delta(m(l_u)-\nu)-2-\nu} \| u-v\|_{X},
    \end{aligned}
  \end{align}
  where we have redefined $k^-= D \max \{\rho^{1+\delta},
  \rho^{\delta} \}$.  From $\boldsymbol{Fs}$, for $|x| \ge 1$, we also
  get
  \begin{align}
    & \label{stimeFs-1}\begin{aligned} f(u,|x|) = f(u,|x|) - f(0, |x|)
      \le \bar{C} |u|^{1+\delta} |x|^{\ell} \le \bar{C}
      \rho^{1+\delta} |x|^{-\ell/\delta} \le k^+ w(x)^{-1},
    \end{aligned}\\
    & \label{stimeFs-2}
    \begin{aligned}
      |f(u,|x|)- f(v,|x|)| &\le |u|^\delta |v-u| |x|^\ell \leq \bar{C}
      \frac{\rho^{\delta}}{|x|^\ell}
      \|u-v\|_{\infty}  |x|^\ell\\
      &\le k^+ w(x)^{-1} \|u-v\|_X,\\
    \end{aligned}
  \end{align}
  with $k^+\!=\! \bar{C} \max \{ \rho^{1+\delta}, \rho^{\delta}
  \}$. Till the end of the proof we need the following straightforward
  estimate: Let $A \in (0,n)$ and denote by $\Gamma$ the Euler Gamma
  function, then:
  \begin{equation}\label{eulero}
    \int_{\RR^n}
    \frac{\eu^{-\frac{|\eta|^2}{4}}}{\big(4\pi \big)^{n/2} |\eta|^A}
    d\eta
    \leq \frac{1}{2^{n-1}(n-A)\Gamma(n/2)} +
    \int_{|\eta| \ge 1}
    \eu^{-\frac{|\eta|^2}{4}}d\eta \leq C(A) \, ,
  \end{equation}
  and we can set $C(A)=2$ if $A\in (0,n-1)$.  We now proceed to prove
  that $F_{\phi} \colon B_{\rho}\to B_\rho$. From \eqref{defF} we have
  that
  \begin{equation} \label{eq:control-on-F-phi}
    \begin{aligned}
      | F_{\phi}(u)| (x,t) & \leq |e^{t\Delta}\phi|(x) + \int_0^t
      \int_{\RR^n} \frac{\exp\Big(-\frac{|x-y|^2}{4(t-s)}\Big)}
      {\big(4\pi(t
        -s)\big)^{n/2}}f(u(y, s), |y|) dyds \\
      & \leq w^{-1}(x)\|e^{t\Delta}\phi\|_X(t) + w^{-1}(x) I,
    \end{aligned}
  \end{equation}
  where
  \begin{equation*}
    I=  w(x)\int_0^t\bigg(\int_{|y| \le 1} +\int_{|y|\geq 1}\bigg)
    \frac{\exp\Big(-\frac{|x-y|^2}{4s}\Big)}{\big(4\pi s
      \big)^{n/2}} f\big(u, |y|\big) dyds =:I_a+I_b.
  \end{equation*}
  Using \eqref{stimeFu-1}-\eqref{stimeFu-2} we get the following
  \begin{equation}\label{K-}
    \begin{aligned}
      I_{a} &\leq k^- \int_0^t\bigg(\int_{\frac{|x|}{2} \le |y| \le 1}
      +\int_{|y|\le \frac{|x|}{2}}\bigg) \frac{w
        (x)\exp\Big(-\frac{|x-y|^2}{4s}\Big)}{\big(4\pi s \big)^{n/2}
        |y|^{2-\delta (m(l_u)-\nu)+\nu}}
      dyds \\
      & =: k^- (I_a^-+I_a^+)
    \end{aligned}
  \end{equation}
  and
  \begin{align*}
    I_a^- &\le 2^{\nu} \int_0^t \int_{\frac{|x|}{2} \le |y| \le 1}
    \frac{ 
      \exp\Big(-\frac{|x-y|^2}{4s}\Big) }{\big(4\pi s
      \big)^{n/2} |y|^{2-\delta (m(l_u)-\nu)}} dyds\\
    &\le 2^{\nu} \int_0^t \int_{ \RR^n} \frac{ 
      \exp\Big(-\frac{|y|^2}{4s}\Big)}{\big(4\pi s \big)^{n/2}
      |y|^{2-\delta (m(l_u)-\nu)}}dyds\\
    &\le 2^{\nu} \int_0^t \int_{\RR^n} \frac{ 
      \exp(-|\eta|^2) |\eta|^{\delta (m(l_u)-\nu)-2} }{ (\pi )^{n/2}
      s^{1-\delta (m(l_u)-\nu)/2}} d\eta ds \le
    K^- 
    t^{\delta (m(l_u)-\nu)/2 },
  \end{align*}
  where $K^->0$ is a 
  positive constant, and we used the fact that the convolution of
  radial decreasing function is radial decreasing too (see
  \cite[Lemma~1.4]{W}), and that $n-3+\delta (m(l_u)-\nu) >-1$.

  Since $|x-y| \ge \big||x|-|y| \big|$ we get
  \begin{align*}
    \,\,\, I_a^+ &\le \int_0^t \int_{|y| \le \frac{|x|}{2} } \frac{
      w(x)\exp\Big(-\frac{|x|^2}{16 s}
      \Big)}{\big(4\pi s \big)^{n/2}  |y|^{2 + \nu -\delta (m(l_u)-\nu)}}dyds  \\
    & \le \int_0^t \int_{|y| \le \frac{|x|}{2} } \frac{
      w(x)\exp\Big(-\frac{|x|^2}{32 s} \Big)\exp\Big(-\frac{|y|^2}{8
        s}\Big)}{\big(4\pi s \big)^{n/2}
      |y|^{2+\nu-\delta (m(l_u)-\nu)}} dyds \\
    & \le \int_0^t \frac{w(x)\exp\Big(-\frac{|x|^2}{32 s}\Big)}{ s^{1
        -[ \delta (m(l_u)-\nu)/2] + \nu/2} }ds \int_{\RR^n}
    \frac{\exp\Big(-\frac{|\eta|^2}{8}\Big)}{
      \big(4\pi \big)^{n/2}|\eta|^{2+\nu-\delta (m(l_u)-\nu)} }d\eta \\
    &\leq 2^{n/2} C\int_0^t \frac{w(x)}{ s^{1 -\delta m(l_u)/2+\nu/2}}
    \exp\Big(-\frac{|x|^2}{32 s}\Big) ds
  \end{align*}
  where we used that $2+\nu -\delta (m(l_u)-\nu)<2+ m(l_u)<n$, and
  $C=C(2+m(l_u))$ is the constant defined in (\ref{eulero}). Now we
  need to distinguish between the $|x| \le 1$ and the $|x| \ge 1$
  case.  Assume the former so that $w(x)=|x|^{\nu}$, and observe that
  $h(a):=\eu^{-a/32}a^{\nu/2} \le D_1=h(16 \nu)$; for any $a \ge 0$ we
  get
  \begin{equation}\label{K+}
    \begin{aligned}
      I_a^+\leq D_1 C\int_0^t \frac{1}{ s^{1 -[ \delta
          (m(l_u)-\nu)/2]}}ds \le K^+ t^{[ \delta (m(l_u)-\nu)/2]}.
    \end{aligned}
  \end{equation} where
  $K^+ =\frac{D_1 C}{\delta (m(l_u)-\nu)/2}>0$ is a constant. When $|x| \ge 1$ so that $w(x)=|x|^{\ell/\delta}$;
  setting $\bar{h}(a):=\eu^{-a/64}a^{\ell/2\delta}$ we find $\bar{h}(a) \le D_2:=\bar{h}(32 \ell/\delta)$ for any $a \ge 0$
  and similarly $\tilde{h}(a):=\eu^{-a/64}a^{1 -[ \delta (m(l_u)-\nu -1/\ell)/2] + \nu/2}  $ is bounded, say $\tilde{h}(a) \le D_3$. Thus
  \begin{equation}\label{K+I}
    \begin{aligned}
      I_a^+ & \le C\int_0^t
      \left[\left(\frac{|x|^2}{s}\right)^{\frac{\ell}{2\delta}}
        \exp\Big(-\frac{|x|^2}{64s}\Big)\right] \frac{
        \exp\Big(-\frac{1}{64 s}\Big)}{ s^{1 -[ \delta (m(l_u)-\nu
          -1/\ell)/2] + \nu/2} }ds \\ & \le C D_2 \int_0^t
      \bar{h}(\frac{|x|^2}{s})\tilde{h}(\frac{1}{s}) ds \le CD_2D_3 t
      \le \, K^+t
    \end{aligned}
  \end{equation}
  with a possibly larger constant $K^+$.  Now we estimate $I_b$. From
  \eqref{stimeFs-1}--\eqref{stimeFs-2} we get:
  \begin{equation}\label{Ib}
    \begin{aligned}
      & I_{b} \leq k^+ w(x) \int_0^t\bigg(\int_{ |y| \ge \max
        \{\frac{|x|}{2},1 \}}+\int_{1 \le |y| \le \frac{|x|}{2}
      }\bigg) \frac{\exp\Big(-\frac{|x-y|^2}{4s}\Big)}{\big(4\pi s
        \big)^{n/2} |y|^{\ell/\delta} }
      dyds  \\
      &=:k^+ (I_b^-+I_b^+) \, .
    \end{aligned}
  \end{equation}
  Observing that $\hat{h}(a)= a^{\ell/(2 \delta)} \eu^{-a/32} \le
  \hat{h}(8 \ell/\delta)$ and $C_b=2^{3n/2} \hat{h}(8 \ell/\delta)$,
  we find
  \begin{align*}
    & I_b^- \le w(x) \int_0^t \int_{ |y| \ge \max \{\frac{|x|}{2},1
      \}} \frac{\exp\Big(-\frac{|x-y|^2}{4s}\Big)}{\big(4\pi s
      \big)^{n/2}|y|^{\ell/\delta}}dyds \le  \int_0^t\frac{w(x)}{w(\max \{\frac{|x|}{2},1\})}    \le 2^{\ell/ \delta}  t,\\
    &
    \begin{aligned}
      I_b^+ &\le \int_0^t w(x) \eu^{-\frac{|x|^2}{32 s} } ds \int_{ 1
        \le |y| \le \frac{|x|}{2} } \frac{ \eu^{-\frac{|x|^2}{32 s} }
      }{\big(4\pi s \big)^{n/2}}dy\le 2^{3n/2} |x|^{\ell/\delta} t
      \eu^{-\frac{|x|^2}{32 t} } \le C_b t^{1+\frac{\ell}{2\delta}} ,
    \end{aligned}
  \end{align*}
  Therefore, there is $K>0$ such that $I \le K \max
  \{t^{1+\ell/(2\delta)},t , t^{\delta(m(l_u)-\nu)/2} \}$, with $K=
  K(n, \ell, \nu, \|\phi\|_{X})$ and relation
  \eqref{eq:control-on-F-phi} reduces to
  \begin{equation} \label{eq:ball-into-ball} |F_{\phi}(u)|(x, t) \leq
    w^{-1}(x)\| e^{t\Delta}\phi \|_{X} \!+  w^{-1}(x) K\max
    \{T^{1+\ell/(2\delta)}, T, T^{\delta(m(l_u)-\nu)/2}\}.\!\!\!
  \end{equation}
  To estimate the term $w^{-1}(x) \|e^{\Delta t}\phi \|_X $ we follow
  an approach similar to the one used above. Indeed, we rewrite $
  e^{t\Delta}\phi (x)$ as follows
  \begin{equation} \label{eDeltaT} e^{t\Delta}\phi (x)=
    \bigg(\int_{|y| \le 1}\!\! +\int_{|y|\geq 1}\!\!\bigg)
    \frac{\exp\Big(-\frac{|x-y|^2}{4t}\Big)}{\big(4\pi t \big)^{n/2}}
    \phi(y) dy =: I_\alpha + I_\beta.
  \end{equation}
  Hence, we get
  \begin{align*}
    I_\alpha &\leq \|\phi\|_X \bigg(\int_{\frac{|x|}{2} \le |y| \le 1}
    +\int_{|y|\le \min\{ \frac{|x|}{2},1 \}}\bigg) \frac{
      \exp\Big(-\frac{|x-y|^2}{4 t} \Big)}{\big(4\pi t
      \big)^{n/2}w(y)} dy =: I_{\alpha}^-+I_{\alpha}^+.
  \end{align*}
  For $\frac{|x|}{2} \le |y| \le 1$, we reach
  \begin{equation}\label{KI-alpha-}
    \begin{aligned}
      I_{\alpha}^-&\leq \frac{\|\phi\|_X 2^\nu}{w(x)}
      \int_{\frac{|x|}{2}\leq |y| \leq 1}
      \frac{\exp\Big(-\frac{|x-y|^2}{4 t}\Big)
      }{\big(4\pi t \big)^{n/2}} dy\\
      & \leq \frac{\|\phi\|_X}{w(x)} 2^\nu \int_{\RR^n}
      \frac{\exp\Big(-\frac{|y|^2}{4 t}\Big)}{\big(4\pi t \big)^{n/2}
      } dy = \frac{\|\phi\|_X 2^\nu}{w(x)} .
    \end{aligned}
  \end{equation}
  For the term $I_\alpha^+$, using (\ref{eulero}) we obtain
  \begin{equation*} \label{I-alpha+}
    \begin{aligned}
      I_{\alpha}^+&\leq \|\phi\|_X \exp\Big( \frac{-|x|^2}{32t}\Big)
      \int_{|y| \le \min\{ \frac{|x|}{2}, 1\}}
      \frac{\exp\Big(-\frac{|y|^2}{8 t}\Big)}{\big(4\pi t
        \big)^{n/2}|y|^\nu }
      dy \\
      &\leq \|\phi\|_X \exp\Big( \frac{-|x|^2}{32t}\Big) \int_{\RR^n}
      \!\!\!  \frac{\exp\Big(-\frac{|\eta|^2}{4 }\Big) }{\big(4\pi
        \big)^{n/2} |\eta|^\nu t^{\nu/2}} d\eta \le 2 \|\phi\|_X
      \frac{\exp\Big( \frac{-|x|^2}{32t}\Big)}{t^{\nu/2}} .\!\!\!\!\!
    \end{aligned}
  \end{equation*}
  Now, arguing as in (\ref{K+}), (\ref{K+I}) we find for $|x| \le 1$
  \begin{equation} \label{KI-alpha+}
    \begin{aligned}
      I_{\alpha}^+& \le 2 \|\phi\|_X
      \frac{\Big(\frac{|x|^2}{t}\Big)^{\nu/2}\exp\Big(
        \frac{-|x|^2}{32t}\Big)}{|x|^{\nu}} \le \frac{2D_1\|\phi\|_X
      }{w(x)}
    \end{aligned}
  \end{equation}
  while for $|x| \ge 1$, since $\bar{h}(a)=a^{\ell/(2\delta)}
  \eu^{-a/64} \le D_2$, we get
  \begin{equation*}
    \begin{aligned}
      I_{\alpha}^+& \le 2 \|\phi\|_X
      \frac{\Big(\frac{|x|^2}{t}\Big)^{\ell/(2\delta)}\exp\Big(
        \frac{-|x|^2}{64t}\Big)}{w(x)} \; \frac{\exp\Big(
        \frac{-|x|^2}{64t}\Big)}{t^{\nu/2-\ell/(2\delta)}} \le \ep(t)
      \; \frac{2D_2\|\phi\|_X }{w(x)}
    \end{aligned}
  \end{equation*}
  where $\ep(t):=\exp\big( \frac{-1}{64t}\big)t^{\ell/(2\delta)-nu/2}
  \to 0$ as $t \to 0$. So choosing $t$ small enough we can assume that
  (\ref{KI-alpha+}) holds for $|x| \ge 1$, too.  Take into account
  $I_\beta$, to get
  \begin{equation}\label{KIbeta}
    \begin{split}
      I_\beta &\leq \frac{\|\phi\|_X}{w(x/2)} \int_{ |y| \ge
        \frac{|x|}{2} }
      \frac{\exp\Big(-\frac{|x-y|^2}{4t}\Big)}{\big(4\pi t
        \big)^{n/2}} \\
&\leq \frac{\|\phi\|_X}{w(x/2)}\|\phi\|_X
      \int_{\RR^n} \frac{\exp\Big(-\frac{|\eta|^2}{4}\Big)}{\big(4\pi
        \big)^{n/2}}d\eta \\
      & \leq \frac{\|\phi\|_X}{w(x/2)} \le
      \frac{\|\phi\|_X}{w(x)} \max \{2^{\nu}, 2^{\ell/\delta} \}
    \end{split}
  \end{equation}
  Collecting the estimates
  \eqref{eDeltaT}-\eqref{KI-alpha-}-\eqref{KI-alpha+}-\eqref{KIbeta},
  we have that relation \eqref{eq:ball-into-ball}, and hence
  \eqref{eq:control-on-F-phi}, for $T \le 1$ gives
  \begin{equation*}
    \begin{aligned}
      \| F_{\phi}(u) \|_X(T) &\leq \big( 2D_1+
      2^{\nu+1}+2^{\ell/\delta} \big)
      \|\phi\|_X +  K\max \{ T,   T^{\delta(m(l_u)-\nu)/2}\}\\
      &\leq \rho/2 + CT\|\phi\|_X + K\max \{ T,
      T^{\delta(m(l_u)-\nu)/2}\}.
    \end{aligned}
  \end{equation*}
  Let $T_0 = T_0(n, \ell, \|\phi\|_{X}, \nu, \rho) > 0$ be such that,
  for any $T\leq T_0$,
  \begin{equation*}
    K\max \{ T,   T^{\delta(m(l_u)-\nu)/2}\}< \rho/2.
  \end{equation*}
  Then, we have that
  \begin{equation*}
    \| F_{\phi}(u)\|_{L^\infty(0, T; X)}\leq \rho, \textrm{ for any } T\leq T_0
  \end{equation*}
  and hence $F_{\phi}$ maps $B_\rho$ into $B_\rho$, for $T \leq T_0$.


  \medskip

  Analogously, let $u,v \in B_{\rho}$, we get
  \begin{equation}
    \begin{split}
      |F_{\phi}(u)-F_{\phi}(v)|(x, t)  \leq & \int_0^t \Big(\int_{|y|
        \le 1} \frac{k^- \exp\Big(-\frac{|x-y|^2}{4 s}\Big)}{\big(4\pi
        s \big)^{n/2} |y|^{2+\nu-\delta (m(l_u)-\nu)}} dy +
      \\
      & +\int_{|y|\ge 1} \frac{k^+\exp\Big(-\frac{|x-y|^2}{4
          s}\Big)}{\big(4\pi s \big)^{n/2}|y|^{\ell/\sigma}} dy \Big)
      \| u-v\|_{X} ds.
    \end{split}
  \end{equation}
  Repeating the argument of (\ref{K-}) and (\ref{K+}) we get
  \begin{equation}\label{contracta}
      \| [F_{\phi}(u)-  F_{\phi}(v)]\|_X (t) \le k^-[K^- t^{\delta
        (m(l_u)-\nu)/2}+ K^+ t]+k^+[2^{\ell/ \delta} + C_b
      t^{\ell/(2\delta)}] t.
  \end{equation}
  Therefore, taking $T < T_0$ sufficiently small, it follows that
  $F_{\phi}$ maps $B_\rho$ into $B_\rho$ and it is actually a
  contraction. From the contraction principle, we obtain existence and
  uniqueness of a fixed point $u$ in $B_\rho$ which in turn implies
  the existence and local uniqueness of a $C_B$-mild solution to
  \eqref{parab}--\eqref{data}.  Then, we can restart the reasoning, by
  setting $\phi(x) = u(x, T)$ and go up to $T_\phi$ by a ladder
  argument.  Note that if $T_{\phi}<\infty$ and $\lim_{t \to T_{\phi}}
  \|u(x,t)\|_X$ is bounded we can restart the ladder argument and
  obtain a continuation interval $[0,T'] \supset [0,T_{\phi})$ and
  this is a contradiction. Hence if $T_{\phi}<\infty$ we get $\lim_{t
    \to T_{\phi}} \|u(x,t)\|_X=+\infty$.
\end{proof}


As a direct consequence of Lemma~\ref{lemma-M} we have the following
existence result

\begin{thm} \label{teo:CB-CS-mild-sol} Assume $\boldsymbol{Fu}$,
  $\boldsymbol{Fs}$. Let $\phi\in X$ be the initial datum for the
  Equation \eqref{parab}. Then problem \eqref{parab}--\eqref{data} has
  a unique weak solution $u$ on $\RR^n \times [0, T_\phi)$,
  $T_\phi>0$.  If $\phi\in C_B(\RR^n)=C(\RR^n)\cap L^\infty(\RR^n)$
  then $u\in C\big(\RR^n\times [0, T_\phi)\big)\cap
  L^\infty\big(\RR^n\times[0, T_\phi)\big)$, and if $T_\phi < +\infty$
  then $\lim_{t\to T_\phi^-} \|u(\cdot , t)\|_{L^\infty(\RR^n)} =
  +\infty$.  Similarly, if $\phi\in C_S(\RR^n) =C(\RR^n\backslash
  \{0\})\cap L^\infty_w(\RR^n) $ then $u\in
  C\big((\RR^n\backslash\{0\})\times [0, T_\phi)\big) \cap
  L^\infty([0, T_\phi), L^\infty_w(\RR^n) )$, and if $T_\phi <
  +\infty$, then $\lim_{t\to T_\phi^-} \|u(\cdot,
  t)w(\cdot)\|_{L^\infty(\RR^n)} = +\infty$.

  Furthermore, if $\phi\geq 0$, then $u \geq 0 $; if $\phi$ is radial,
  then $u$ is radial in $x$; if $\phi$ is radial and radially
  non-increasing, then $u$ is non-increasing in $|x|$.
\end{thm}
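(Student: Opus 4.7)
The plan is to deduce the theorem from the fixed-point construction of Lemma~\ref{lemma-M}, combined with the equivalence between continuous weak solutions and mild solutions provided by Lemma~\ref{lem:lemma1.5} and Proposition~\ref{W-lemma 2.2}. Lemma~\ref{lemma-M} already furnishes, for every $\phi \in X$, a maximal $T_\phi \in (0,+\infty]$ and a unique fixed point $u$ of $F_\phi$ in $B^T_\rho(\phi)$ for each $T<T_\phi$, together with the blow-up alternative $\|u(\cdot,t)\|_{X} \to +\infty$ as $t \to T_\phi^-$ whenever $T_\phi<\infty$. By Lemma~\ref{lem:lemma1.5}, on each compact $[0,T]$ the growth condition on $f$ needed to pass from mild to continuous weak (and back) is verified thanks to $\boldsymbol{F0}$, $\boldsymbol{Fu}$, $\boldsymbol{Fs}$ and the $L^{\infty}_w$ bound from $u\in B^T_\rho(\phi)$. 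Uniqueness in the c.w. class then follows from the uniqueness of the fixed point.

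For the regularity statements I would separate the two cases according to the weight $w$ in \eqref{eq:weight-w}. If $\phi \in C_B(\RR^n)$ I take $\nu=0$, so that $w\equiv 1$ on $\{|x|\le 1\}$; the first term $\eu^{t\Delta}\phi$ is jointly continuous on $\RR^n\times[0,T_\phi)$ and reduces to $\phi$ as $t\to 0^+$ by classical heat-semigroup theory, while the Duhamel term is continuous in $(x,t)$ by dominated convergence, using as majorants the pointwise bounds \eqref{stimeFu-1}--\eqref{stimeFs-1} derived in Lemma~\ref{lemma-M}. Together with the $L^\infty$ control inherited from $B^T_\rho(\phi)$, this yields $u \in C(\RR^n\times[0,T_\phi))\cap L^\infty$. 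If instead $\phi \in C_S(\RR^n)$ the same argument works in a neighborhood of every $x_0\neq 0$, on which the weight $w$ is bounded away from zero; joint continuity of $F_\phi(u)$ on $(\RR^n\setminus\{0\})\times[0,T_\phi)$ follows, while the global $L^\infty_w$ bound is part of the fixed-point statement.

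For the qualitative properties I would argue as follows. Since $f(u,|x|)\ge 0$ when $u\ge 0$ by $\boldsymbol{F0}$, the Picard iterates $u^{(k+1)}=F_\phi(u^{(k)})$ starting from $u^{(0)}\equiv 0$ are nonnegative and converge to $u$ in $B^T_\rho$ by the contraction estimate \eqref{contracta}; hence $u\ge 0$ whenever $\phi\ge 0$. For radial symmetry, given $A \in O(n)$ set $\tilde u(x,t):=u(Ax,t)$: the heat kernel is rotationally invariant and $f(\cdot,|Ax|)=f(\cdot,|x|)$, so $\tilde u$ is another fixed point of $F_\phi$ with the same initial datum; uniqueness forces $\tilde u = u$. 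Monotonicity in $|x|$ when $\phi$ is radially non-increasing is obtained by approximating $\phi$ with smooth, compactly supported radial decreasing data and applying the comparison principle to $u(x,t)$ and $u(x+h,t)$ for $|x+h|\ge|x|$, then passing to the limit; equivalently one can observe that the Gaussian convolution preserves radial monotonicity and the nonlinear term, evaluated on a radial decreasing profile, preserves it as well.

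The main technical obstacle is the continuity statement in the $C_S$ case: one must show that although $\phi$ is genuinely unbounded at the origin, the mild solution is continuous on $(\RR^n\setminus\{0\})\times[0,T_\phi)$ up to $t=0$. The delicate point is the pointwise convergence of the Duhamel integral as $(x,t)\to(x_0,0)$ with $x_0\neq 0$: this requires revisiting the splittings $|y|\le|x|/2$, $|x|/2\le|y|\le 1$, $|y|\ge 1$ used in Lemma~\ref{lemma-M}, and exploiting the strict inequality $\nu<m(l_u)$ to secure an integrable majorant near the origin via \eqref{eulero}. Once this is done, together with the ladder continuation argument at the end of Lemma~\ref{lemma-M}, the blow-up alternative on the appropriate weighted norm is immediate.
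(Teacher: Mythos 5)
Your proposal is correct and follows essentially the same route as the paper, which presents this theorem as a direct consequence of Lemma~\ref{lemma-M} (together with Lemma~\ref{lem:lemma1.5} and Proposition~\ref{W-lemma 2.2}); in fact you supply more detail than the paper does (Picard iterates for positivity, rotation invariance plus fixed-point uniqueness for radial symmetry, the \cite[Lemma~1.4]{W}-type convolution argument for monotonicity, and the dominated-convergence check of joint continuity of the Duhamel term). The one point to watch is the final claim: your assertion that ``the nonlinear term, evaluated on a radial decreasing profile, preserves it as well'' needs $f(u,r)$ to be non-increasing in $r$ for fixed $u$, which $\boldsymbol{F0}$ alone does not guarantee (e.g.\ $k_1(r)=1+r^a$) --- but this caveat applies equally to the paper's own unproved statement.
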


\begin{remark}
  Assume that $f$ is locally H\"older
  continuous in $x$ and assume also there exists $l\geq 0$ such that
  $f(u, |x| ) |x|^{-l} = f(u, r)r^{-l}$ is
  locally Lipschitz in $u$ uniformly with respect to $x$ (and so in
  $r$) in any bounded subset of $\RR^n$.  In such a case, using
  \cite[Lemma~1.2]{W} and arguing as in \cite{W}, one can verify that
  the $C_B$-solutions are actually classical.

  In particular, we have that the potentials in
  \eqref{eq:potential-0}--\eqref{eq:potential-2.5} verify these
  conditions.
  \end{remark}

As a consequence of Proposition~\ref{W-lemma 2.2} and
Lemma~\ref{lemma-M} we 
have the following result
\begin{thm} \label{thm:theorem-2.4} Assume that $\boldsymbol{Fu}$,
  $\boldsymbol{Fs}$ are verified. Then
  \begin{itemize}
  \item [\textbf{(i)}:] Suppose that $\bar{u}$ and $\underbar{u}$ are $C_S$-mild
    upper and lower solutions of \eqref{parab}-\eqref{data} on $\RR^n
    \times [0, T)$.  Then $\bar{u }> \underbar{u}$ on $\RR^n \times
    [0, T)$, and the unique $C_S$-mild solution of
    \eqref{parab}-\eqref{data} on $\RR^n \times [0, T_\phi)$ satisfies
    that $\underbar{u}\leq u \leq \bar{u}$ on $\RR^n \times [0, T)$
    and $T_\phi>T$.\\[-0.3 cm]
  \item [\textbf{(ii)}:] If the initial value $\phi$ in \eqref{data} is a
    c.w. upper (lower) solution of (\ref{laplace}), then the
    $C_S$-mild solution u of \eqref{parab}-\eqref{data} is
    non-increasing (non-decreasing) in $t \in [0, T_\phi)$.\\[-0.3 cm]
  \item[\textbf{(iii)}:] If $\phi$ is radial, then $\bar{u}$, $\underbar{u}$ and
    $u$ are radial for any $t$ in their dominion of definition.\\[-0.3
    cm]
  \item[\textbf{(iv)}:] If $\phi$ is a c.w. upper (lower) solution but not a
    solution of (\ref{laplace}), then $u_t(x, t) < (>) 0$, $t>0$.
  \end{itemize}
\end{thm}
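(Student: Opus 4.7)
\textbf{Proof plan for Theorem~\ref{thm:theorem-2.4}.}

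The plan is to base everything on the monotonicity of the Duhamel operator $F_\phi$ defined in \eqref{defF}. Since the heat kernel is positive and $f(\cdot,r)$ is increasing in $u$ by $\boldsymbol{F0}$, if $v\le w$ pointwise (in the relevant weighted sense) and $\phi_1 \le \phi_2$ then $F_{\phi_1}(v)\le F_{\phi_2}(w)$. For part \textbf{(i)}, let $w=\bar{u}-\underbar{u}$. Using the upper/lower mild inequalities and subtracting, $w$ satisfies
\begin{equation*}
w(x,t)\ge e^{t\Delta}(\bar{u}(\cdot,0)-\underbar{u}(\cdot,0))(x)+\int_0^t e^{(t-s)\Delta}\big[c(x,s)\,w(\cdot,s)\big](x)\,ds,
\end{equation*}
where $c(x,s)=\int_0^1 f_u(\underbar{u}+\sigma w,|x|)\,d\sigma$ is locally bounded thanks to the Lipschitz assumption in $\boldsymbol{F0}$ (and to the uniform bounds from being in $B_\rho$). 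A Gronwall iteration on this Duhamel inequality yields $w\ge 0$; the strict sign $w>0$ then follows from the strong maximum principle applied to the (smoothly) underlying linear parabolic inequality, or equivalently from the fact that $e^{t\Delta}$ is strictly positivity-improving. To sandwich $u$, I would run the monotone iteration $u^{k+1}=F_\phi(u^k)$ starting from $u^0=\underbar{u}$: monotonicity of $F_\phi$ together with $F_\phi(\underbar{u})\ge \underbar{u}$ (the lower-solution property) and $F_\phi(\bar{u})\le \bar{u}$ force $\underbar{u}\le u^k\le u^{k+1}\le\bar{u}$. The limit is a fixed point of $F_\phi$ in $B_\rho$, so by the uniqueness in Lemma~\ref{lemma-M} it must coincide with the mild solution $u$; this simultaneously gives the sandwich and $T_\phi>T$.

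For part \textbf{(ii)}, let $\phi$ be a c.w.\ upper solution of \eqref{laplace}. By Proposition~\ref{W-lemma 2.2}, the time-independent function $\phi(x)$ is a $C_S$-mild upper solution of \eqref{parab}--\eqref{data} with its own initial datum $\phi$. Part \textbf{(i)} then yields $u(x,t)\le \phi(x)$ for all $t\in[0,T_\phi)$; in particular $u(\cdot,h)\le\phi=u(\cdot,0)$ for every small $h>0$. Now $u(x,t+h)$ is itself the unique $C_S$-mild solution with initial datum $u(\cdot,h)\le\phi$, so applying part \textbf{(i)} again with $\underbar{u}(x,t):=u(x,t+h)$ and $\bar{u}(x,t):=u(x,t)$ gives $u(x,t+h)\le u(x,t)$. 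This is exactly the monotonicity in $t$; the lower-solution case is symmetric.

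Part \textbf{(iii)} is an immediate consequence of the uniqueness assertion of Lemma~\ref{lemma-M}: rotations of $\RR^n$ commute with both the heat kernel and with $f(\cdot,|x|)$, so if $\phi$ is radial and $R\in SO(n)$, then $v(x,t):=u(Rx,t)$ is also a fixed point of $F_\phi$ in $B_\rho$, forcing $v=u$; radial monotonicity in $|x|$ propagates similarly because $e^{t\Delta}$ preserves the class of radial, non-increasing functions, and the same is true of each Picard iterate, which then passes to the limit. For part \textbf{(iv)}, I would combine part \textbf{(ii)} with the strong maximum principle: for $t>0$ the regularization of the heat semigroup makes $u$ a classical solution (using the H\"older/Lipschitz regularity of $f$ mentioned in the remark after Theorem~\ref{teo:CB-CS-mild-sol}), and $v:=u_t$ satisfies the linear parabolic equation
\begin{equation*}
v_t=\Delta v+f_u(u,|x|)\,v,
\end{equation*}
with $v\le 0$ from part \textbf{(ii)} and $v(\cdot,0^+)=\Delta\phi+f(\phi,|x|)\lneqq 0$ because $\phi$ is an upper solution but not a stationary solution. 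The strong maximum principle applied to $-v$ then yields $v<0$ strictly for all $t>0$. The main obstacle I anticipate is making the monotone iteration in \textbf{(i)} rigorous in the weighted class $X=L^\infty_w(\RR^n)$, and in particular controlling the difference $f(\bar u)-f(\underbar u)$ uniformly near $x=0$ where both solutions may be unbounded; this is exactly where the contraction estimate \eqref{contracta} from the proof of Lemma~\ref{lemma-M} is decisive, and the Gronwall step has to be performed on the weighted norm rather than on $L^\infty$.
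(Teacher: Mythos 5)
Your proposal is correct and follows essentially the route the paper itself intends: the paper omits the proof, stating that it is obtained by adapting \cite[Theorem~2.4]{W} (with claim \textbf{(iv)} via the comparison argument of \cite[Lemma~2.6]{W} and \cite{Gazzola}), and your sketch --- comparison through the monotone Duhamel operator with the Gronwall step run in the weighted norm $X$ using the contraction estimate \eqref{contracta}, the time-shift argument for \textbf{(ii)}, rotation invariance plus uniqueness for \textbf{(iii)}, and the strong maximum principle applied to $v=u_t$ for \textbf{(iv)} --- is precisely that adaptation. You also correctly identify the only genuinely delicate point, namely controlling $f(\bar u)-f(\underline u)$ near $x=0$ in the weighted class via $\boldsymbol{Fu}$, which is where this setting differs from Wang's.
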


The proof of this theorem is omitted because it can be easily derived
by adapting that one of \cite[Theorem~2.4]{W} to the current case.  We
point out that claim \textbf{(iv)} follows directly by exploiting a comparison
principle, arguing as in \cite[Lemma~2.6]{W} (see, e.g.,
\cite{Gazzola} for a full-fledged proof of this well-established
comparison argument. See also \cite{BH}).  Further, a result analogous
to Theorem~\ref{thm:theorem-2.4} holds true also in the even simpler
case of the $C_B$-mild solutions of \eqref{parab}--\eqref{data} on
$\RR^n \times [0, T)$ (see \cite[Theorem~2.4]{W}).

To conclude this section we give a result about the global solution
$u(x, t;\phi)$ of the problem (\ref{parab})--(\ref{data}), for
$\phi\in C_B(\RR^n)$ or $\phi\in C_S(\RR^n)$.

%

%
\begin{remark}\label{limit} Assume that $\phi$ is a
  singular upper (respectively lower) solution.  From
  Theorem~\ref{thm:theorem-2.4} point $\textbf{(iv)}$, which
  translates \cite[Lemma~2.6]{W} to the current case, it follows that
  $\lim_{t\to T_\phi} u(x, t;\phi) = u(x, T_\phi;\phi)$ exists for any
  $x \ne 0$.  Following the proof of Claim 2 of \cite[Theorem~3.6]{W},
  using Lebesgue dominated convergence theorem and regularity theory
  for elliptic equation we see that $u(x, T_\phi; \phi)$ is a
  distributional solution of (\ref{laplace}). Moreover if $\phi$ is
  radial then $u(x, T_\phi; \phi)$ is radial too.
\end{remark}

\section{Long time behavior: main results}
\label{sec:long-time-behavior}
Now we are ready to state and prove our results 
in their general form, from which Theorems \ref{wangslowLM},
\ref{main2}, and Corollary~\ref{main3} follow.

Let $w(x)$ be defined as in (\ref{eq:weight-w}).
\begin{thm}\label{unstable}
  Assume $\boldsymbol{Fu}$, $\boldsymbol{Fs}$, $\boldsymbol{A^-}$,
  $\boldsymbol{Gu}$ and $\boldsymbol{Gs}$, with $2^* \le l_s <
  \sigma^*$ and $l_u \ge 2^*$.
  \begin{itemize}
  \item [\textbf{(i)}:] If $\phi(x) \lneqq U(|x|,\alpha)$ for some $\alpha>0$,
    then it holds that $\| u(x, t ; \phi) \|_{\infty} \to 0$ and $\|
    u(x, t ; \phi)(1+|x|^{\nu}) \|_{\infty} \to 0$ as $t \to +\infty$
    for any $0 \le \nu < m(l_s)$.\\[-0.3 cm]
  \item [\textbf{(ii)}:] Let $\phi$ be a continuous initial data or a singular
    one such that $\|\phi(x)w(x)\|_{\infty}$ is bounded, for some $\nu
    \in [0,m(l_s))$. If $\phi(x) \gneqq U(|x|,\alpha)$ for some
    $\alpha>0$, then $\|u(x, t ; \phi)
      w(x)\|_{\infty} $ must blow up in finite time.
  \end{itemize}
\end{thm}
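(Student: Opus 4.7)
The plan rests on three pillars developed in the previous sections: the comparison principle for $C_S$-mild solutions in Theorem~\ref{thm:theorem-2.4}, the tent-type stationary sub/super-solution construction obtained by gluing two crossing regular ground states at an intersection radius provided by Proposition~\ref{first} (the pointwise minimum of two regular G.S. is a c.w. upper solution of (\ref{radsta}) because of the downward jump in the radial derivative at the crossing, and the pointwise maximum is a c.w. lower solution by the same reason with the opposite sign), and the exclusion result Lemma~\ref{nosol-} asserting that in the present range of parameters no positive radial stationary solution can be trapped between the two branches of such a tent.

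For part \textbf{(i)}, comparison gives $0\le u(x,t;\phi)\le U(|x|,\alpha)$ for every $t\ge 0$, so $u$ is globally bounded and inherits from $U(\cdot,\alpha)$ the slow decay $\sim|x|^{-m(l_s)}$ at infinity. For any $\alpha_1\in(0,\alpha)$, pick via Proposition~\ref{first} a crossing radius $R_0$ of $U(\cdot,\alpha_1)$ and $U(\cdot,\alpha)$ and form the tent
\[
\psi_{\alpha_1}(x):=\min\{U(|x|,\alpha_1),U(|x|,\alpha)\},
\]
which is a c.w. upper solution of (\ref{radsta}) with $\|\psi_{\alpha_1}\|_\infty=\alpha_1$. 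By Theorem~\ref{thm:theorem-2.4}\textbf{(ii)}--\textbf{(iv)} the solution $v(\cdot,t;\psi_{\alpha_1})$ is monotonically non-increasing in $t$, and parabolic regularity yields $C^2_{loc}$-convergence to a non-negative bounded radial stationary solution bounded above by $\psi_{\alpha_1}$; Lemma~\ref{nosol-} forces this limit to be $0$. To deduce the same for $\phi$ itself, an $\omega$-limit argument using the boundedness of $u$ and the classification in Proposition~\ref{super} shows that any $C^2_{loc}$ subsequential limit $u_\infty$ of $u(\cdot,t;\phi)$ is a non-negative bounded stationary solution with $u_\infty\le U(\cdot,\alpha)$; the crossing property of regular G.S. from Proposition~\ref{first} rules out $u_\infty=U(\cdot,\alpha')$ for $\alpha'\ne\alpha$, while $u_\infty=U(\cdot,\alpha)$ itself is excluded by applying the strong maximum principle to the linear parabolic equation $w_t=\Delta w+c(x,t)w$ with $c\ge 0$ satisfied by $w:=U(\cdot,\alpha)-u$ starting from $w_0\gneqq 0$. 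Thus $u(\cdot,t;\phi)\to 0$ on compacts, so eventually $u(\cdot,t;\phi)\le\psi_{\alpha_1}$ and, by comparison, $\limsup_t\|u(\cdot,t;\phi)\|_\infty\le\alpha_1$; letting $\alpha_1\to 0^+$ yields $\|u(\cdot,t;\phi)\|_\infty\to 0$, and the uniform envelope $u\le U(\cdot,\alpha)\le C(1+|x|)^{-m(l_s)}$ controls the weighted norm $(1+|x|^\nu)u$ outside a compact set whenever $0\le\nu<m(l_s)$.

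For part \textbf{(ii)}, the symmetric construction produces the lower tent
\[
\zeta_{\alpha_2}(x):=\max\{U(|x|,\alpha_2),U(|x|,\alpha)\},\qquad \alpha_2>\alpha,
\]
which is a c.w. lower solution of (\ref{radsta}). Starting from $\zeta_{\alpha_2}$, Theorem~\ref{thm:theorem-2.4} gives a monotonically non-decreasing evolution whose bounded $\omega$-limit would have to be a positive stationary solution lying above $\zeta_{\alpha_2}$; Lemma~\ref{nosol-} precludes such a solution, so by Theorem~\ref{teo:CB-CS-mild-sol} the weighted norm of $u(\cdot,t;\zeta_{\alpha_2})$ must blow up in finite time. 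Since $\phi\gneqq U(\cdot,\alpha)$ implies, after a short evolution and a strong-maximum-principle argument for $u-U(\cdot,\alpha)$, that $u(\cdot,t_0;\phi)\ge \zeta_{\alpha_2}$ for some $\alpha_2>\alpha$ and some $t_0>0$, comparison propagates blow-up to $u(\cdot,t;\phi)$. The main obstacle, in both parts, is the $\omega$-limit classification at the critical level: excluding $U(\cdot,\alpha)$ itself as a possible limit, and squeezing a tent strictly between $u$ and $U(\cdot,\alpha)$, which relies on a strong-maximum/Harnack-type analysis of the linear parabolic equation satisfied by $u-U(\cdot,\alpha)$.
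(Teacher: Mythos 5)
Your construction of the tent sub/super-solutions, the use of monotonicity from Theorem~\ref{thm:theorem-2.4} together with Remark~\ref{limit}, and the exclusion of a stationary limit via Lemma~\ref{nosol-} coincide with the paper's argument, and your part \textbf{(ii)} is essentially the paper's proof. The genuine gap is in the step of part \textbf{(i)} where you pass from the tent $\psi_{\alpha_1}$ to a general $\phi\lneqq U(\cdot,\alpha)$. You assert that every $C^2_{loc}$ subsequential limit of $u(\cdot,t;\phi)$ is a stationary solution; but $\phi$ is not an upper or lower solution, so the evolution is not monotone in $t$ and Remark~\ref{limit} does not apply. Quasiconvergence of bounded solutions of supercritical semilinear heat equations on $\RR^n$ is a deep result (cf.\ \cite{PY2}) and is nowhere among the tools developed in the paper; without it this step is unsupported. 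Moreover, even granting quasiconvergence and $u(\cdot,t;\phi)\to 0$ uniformly on compact sets, the conclusion ``eventually $u(\cdot,t;\phi)\le\psi_{\alpha_1}$'' does not follow: by Remark~\ref{macchina} the two G.S.\ $U(\cdot,\alpha_1)$ and $U(\cdot,\alpha)$ cross infinitely often and are both asymptotic to $P_1^{+\infty}|x|^{-m(l_s)}$, so $\psi_{\alpha_1}=\min\{U(\cdot,\alpha_1),U(\cdot,\alpha)\}$ equals $U(\cdot,\alpha_1)<U(\cdot,\alpha)$ on unbounded regions where the a priori bound $u\le U(\cdot,\alpha)$ gives no control relative to $\psi_{\alpha_1}$, and local convergence to $0$ says nothing there.

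The paper avoids all of this by working at time zero: the strong maximum principle plus a time shift give $\phi<U(\cdot,\alpha_2)$ strictly everywhere, and then Lemma~\ref{uniforme} (uniform convergence of $U(\cdot,\alpha_1)$ to $U(\cdot,\alpha_2)$ on all of $[0,\infty)$ as $\alpha_1\to\alpha_2$) lets one pick $\alpha_1<\alpha_2$ so close that the upper tent $\zeta$ built from the pair $(\alpha_1,\alpha_2)$ already dominates $\phi$ at $t=0$; ordinary comparison with $u(\cdot,t;\zeta)\to 0$ then finishes the proof without any $\omega$-limit analysis of $u(\cdot,t;\phi)$ itself. You should replace your quasiconvergence argument by this initial-time squeezing; note that Lemma~\ref{uniforme}, which you never invoke, is precisely the ingredient that makes it work. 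A minor further remark: your strong-maximum-principle exclusion of $u_\infty=U(\cdot,\alpha)$ only yields $u(\cdot,t;\phi)<U(\cdot,\alpha)$ for each fixed $t>0$, which does not prevent convergence to $U(\cdot,\alpha)$ along a sequence of times, so even that piece of the classification would need more work.
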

This result establishes that G.S. with s.d. are on the border between
the basin of attraction of the null solutions and initial data which
blow up in finite time, in the range of the considered parameters. We
emphasize that we need $l_s< \sigma^*$, but $l_u$ has no upper bound:
this allows e.g. $f(u,r)=(1+r^a)u^{q}$ where $q>\sigma^*$ and $a \in
\big(\frac{2}{\sigma^*-2}(q-\sigma^*) , \frac{n-2}{2}(q-2^*) \big)$.
It is easy to check that $\boldsymbol{A^-}$ might be replaced by
$\boldsymbol{H-}$,   and
that Theorem \ref{unstable} implies Theorem \ref{second}.

Next, we state Theorems \ref{unstableslow} and \ref{unstableadd1}
from which Theorems \ref{wangslowLM}, \ref{main2}, and Corollary
\ref{main3} follow.  These results concern a wider range of
parameters, and enable us to understand something about the border of
the basin of attraction of the null solution and of infinity,
i.e. blowing up solutions.  We start from a generalization of a result
by Wang in \cite{W} concerning slow decay solutions.
\begin{thm}\label{unstableslow}
  Assume $\boldsymbol{Fu}$, $\boldsymbol{Fs}$, $\boldsymbol{Gu}$,
  $\boldsymbol{Gs}$.  Assume either $\boldsymbol{A^+}$ with $l_u,l_s
  \in (2_*, 2^*]$ or $\boldsymbol{A^-}$ with $l_u \ge 2^*$ and $2^*
  \le l_s <\sigma^*$.  Then there exists a one-parameter family of
  upper radial solutions of (\ref{laplace}) denoted by
  $\chi_{\tau}(x)$, such that $u(x,t;\chi)$ converge to the null
  solution as $t \to +\infty$ and with the properties described in
  Theorem \ref{wangslowLM}; in particular they have slow decay.
\end{thm}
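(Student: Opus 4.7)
The plan is to follow the scheme of Wang's original argument for Theorem~\ref{wangslow}, adapting the phase-plane analysis of Section~\ref{steadyasympt} to the non-autonomous system \eqref{si.na}. I would build the family $\chi_\tau$ by gluing together two solutions of \eqref{radsta} at an intersection with the correct derivative sign, so that $\chi_\tau$ is a strict radial upper solution of \eqref{laplace} in the distributional sense; then \eqref{parab}--\eqref{data} will monotonically drive $u(\cdot,t;\chi_\tau)$ down to a stationary limit which must be zero, and slow decay of $\chi_\tau$ plus Dini will upgrade this to the weighted $L^\infty$ convergence.

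\textbf{Construction of $\chi_\tau$.} In the supercritical range ($\boldsymbol{A^-}$, $l_u\ge 2^*$, $l_s\in[2^*,\sigma^*)$), Proposition~\ref{super} says that every regular solution $U(\cdot,\alpha)$ is a G.S.\ with slow decay, and Proposition~\ref{first} gives, for any $0<\alpha_1<\alpha_2$ and any $M>0$, a crossing radius $R>M$ with $U(R,\alpha_1)=U(R,\alpha_2)$ and $U'(R,\alpha_2)<U'(R,\alpha_1)$. I set $\chi_\tau(x):=U(|x|,\alpha_1)$ for $|x|\le R$ and $\chi_\tau(x):=U(|x|,\alpha_2)$ for $|x|\ge R$; the radial derivative jumps downward at $|x|=R$, contributing a negative Dirac mass to $\Delta\chi_\tau$ and making $\chi_\tau$ a continuous weak upper solution of \eqref{laplace}, with $\chi_\tau(0)=\alpha_1$ and $|x|^{m(l_s)}\chi_\tau(x)\to P_1^{+\infty}$ as $|x|\to+\infty$. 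Varying $\alpha_1$ (or $\tau$) parametrises the family. In the subcritical range ($\boldsymbol{A^+}$, $l_u,l_s\in(2_*,2^*]$), regular solutions cross zero (Proposition~\ref{sub}) but the S.G.S.\ $V(\cdot,\infty)$ with slow decay exists (Remark~\ref{disorder-}); for $\alpha$ large enough, $U(\cdot,\alpha)$ and $V(\cdot,\infty)$ intersect transversally at some $Z$ with $V'(Z)<U'(Z)$ — which I would verify by tracking the corresponding trajectories in the phase portrait described by Lemma~\ref{picture}/\ref{picture2} — and the analogous gluing produces $\chi_\tau$.

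\textbf{Passage to the limit.} Since $\chi_\tau$ is a strict c.w.\ upper solution of \eqref{laplace} (strict due to the kink at $|x|=R$), Theorem~\ref{thm:theorem-2.4}(iv) gives $u_t(\cdot,t;\chi_\tau)<0$ for $t>0$, so $u(\cdot,t;\chi_\tau)$ is monotonically non-increasing with $0\le u\le\chi_\tau$; the pointwise limit $u_\infty(x):=\lim_{t\to+\infty}u(x,t;\chi_\tau)$ exists and, by Remark~\ref{limit}, is a nonnegative radial distributional solution of \eqref{laplace} with $u_\infty\le\chi_\tau$. Lemmas~\ref{nosol-} (supercritical) and \ref{nosol+} (subcritical) are tailored exactly to rule out positive solutions of \eqref{radsta} dominated by a $\zeta$-type glued function, so $u_\infty\equiv 0$. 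For the weighted norm claim with $0\le\nu<m(l_s)$: the slow decay of $\chi_\tau$ gives $\chi_\tau(x)(1+|x|^\nu)<\ep$ for $|x|>R_\ep$ uniformly in $t$; Dini's theorem applied to the monotone pointwise convergence $u(\cdot,t)\to 0$ on the compact ball $\{|x|\le R_\ep\}$ yields uniform convergence there. Combining, $\|u(\cdot,t;\chi_\tau)(1+|\cdot|^\nu)\|_\infty\to 0$, and Remark~\ref{corr} together with the smoothness of $\boldsymbol{Q^u}(\tau),\boldsymbol{Q^s}(\tau)$ in $\tau$ supplies the continuity of $\tau\mapsto D(\tau)=\chi_\tau(0)$ and the limits $D(\tau)\to 0,+\infty$ as $\tau\to\pm\infty$.

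\textbf{Main obstacle.} The delicate point is the subcritical construction: Lemma~\ref{nosol+} is stated for $l_u\in(\sigma_*,2^*]$, whereas the theorem allows the wider range $l_u\in(2_*,2^*]$. Extending the no-barrier conclusion to $l_u\in(2_*,\sigma_*]$, where the linearisation at $\boldsymbol{P^{-\infty}}$ is a stable node and singular solutions are ordered rather than oscillating, requires either a refined asymptotic argument on the distance between $\boldsymbol{\bar y}(s)$ and $\boldsymbol{\tilde y}(s)$ in polar coordinates (analogous to \eqref{polar}-\eqref{polar2} but monotone), or replacing Remark~\ref{macchina}'s intersection property with a direct comparison using monotonicity of the Pohozaev-type functional $H$ along the trajectories of \eqref{si.na}; this is where I expect most of the technical work to live.
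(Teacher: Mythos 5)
Your proposal follows essentially the same route as the paper: under $\boldsymbol{A^-}$ the statement is exactly Theorem~\ref{unstable}\textbf{(i)} with the glued upper solution $\zeta$ of (\ref{defsub}), and under $\boldsymbol{A^+}$ the paper likewise glues a regular solution $U(|x|,d(\tau))$ on $|x|\le \eu^{\tau}$ to the S.G.S.\ with slow decay on $|x|\ge \eu^{\tau}$, using Lemma~\ref{picture} (Lemma~\ref{picture2} when $l_u=l_s=2^*$) to find the point of $W^u(\tau;l_u)$ with the same $y_1$-coordinate as the slow-decay trajectory and the derivative ordering that makes the kink point downward; the passage to the limit via Theorem~\ref{thm:theorem-2.4} and Remark~\ref{limit} is also as in the paper. (Note that the phase-plane step is genuinely needed in the subcritical case: since $V(r,\infty)$ is singular at the origin and still positive at the zero of the crossing solution $U(\cdot,\alpha)$, one has $V>U$ at both ends, so the existence of a transversal intersection is not free.)

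The one point where you part company with the paper is your ``main obstacle'', which in fact dissolves. Since $\chi_\tau$ coincides with a regular solution near the origin it is bounded there, so the monotone stationary limit $u_\infty\le\chi_\tau$ cannot be singular: it is either zero or a positive \emph{regular} radial solution of (\ref{radsta}). Under $\boldsymbol{A^+}$ with $l_u,l_s\in(2_*,2^*]$, Proposition~\ref{sub} already states that every regular solution is crossing, so no positive regular solution exists on the whole range of parameters. Hence neither Lemma~\ref{nosol+} nor the oscillation property of Remark~\ref{macchina} is invoked here, and the restriction $l_u\in(\sigma_*,2^*]$ never enters this proof (it matters only for the barriers built from fast-decay solutions in Theorem~\ref{unstableadd1}). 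No refined asymptotics near $\boldsymbol{P^{-\infty}}$ are required.
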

Such a result is proved in \cite{W} for $f(u,r)=|x|^{\delta} u^{q-1}$,
but as far as we are aware is new even for the potential considered in
\cite{DLL} and \cite{YZ}, i.e. even for $f$ as in
(\ref{eq:potential-0}) with $k(r) \ne r^{\delta}$ and $f$ as in
(\ref{eq:potential-1}) also when $k_1=k_2=1$.  Theorem
\ref{unstableslow} seems to suggest that slow decay is the optimal
decay rate to have solutions of (\ref{parab}) defined for any $t$. In
fact we have the following result which is in contradiction with this
idea (as far as we are aware these results are new even for the case
$f(u)=u^{q-1}$).
\begin{thm}\label{unstableadd1}
  Assume $\boldsymbol{Fu}$, $\boldsymbol{Fs}$, $\boldsymbol{Gu}$,
  $\boldsymbol{Gs}$.  Assume either $\boldsymbol{A^+}$ with $l_u,l_s
  \in (2_*, 2^*]$ or $\boldsymbol{A^-}$ with $l_u,l_s \ge 2^*$.  Then
  there are one-parameter families of upper and lower radial solution
  of (\ref{laplace}), denoted by $\zeta_{\tau}(x)$ and
  $\psi_{\tau}(x)$ having the properties described in Theorem
  \ref{main2}.
\end{thm}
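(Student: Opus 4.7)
The plan is to construct both families via the cut-and-paste procedure underlying Lemmas~\ref{nosol-}--\ref{nosol+}: for each $\tau \in \RR$, glue a regular solution $U_\tau$ of \eqref{radsta} to a fast-decay solution $V_\tau$ at a radius $Z(\tau)$ where $U_\tau(Z(\tau)) = V_\tau(Z(\tau))$. The sign of $V_\tau'(Z(\tau)) - U_\tau'(Z(\tau))$ at the gluing radius determines whether the kinked function is a weak super- or sub-solution of \eqref{laplace}, yielding $\zeta_\tau$ and $\psi_\tau$ respectively. The parameter $\tau$ plays the role of the Fowler time $s = \ln Z(\tau)$, so varying $\tau$ rescales the crossing radius and produces the one-parameter families; the central value $D(\tau) = U_\tau(0)$ and the fast-decay coefficients $L_\zeta(\tau), L_\psi(\tau) = \lim_{r \to +\infty} r^{n-2} V_\tau(r)$ are read off directly from the chosen pair $(U_\tau, V_\tau)$.

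The heart of the proof is to produce, for each $\tau$, two such crossings with opposite derivative orderings. In the phase plane of \eqref{si.na} this amounts to locating, on a suitable vertical line $\{y_1 = c\}$, two pairs of points $\boldsymbol{Q^u} \in W^u(\tau;l_u)$ and $\boldsymbol{Q^s} \in W^s(\tau;l_u)$ sharing the $y_1$-coordinate but with different $y_2$-coordinates, the two pairs realizing opposite $y_2$-orderings. The required geometry is furnished by Lemmas~\ref{picture}--\ref{picture2} (in case $\boldsymbol{A^-}$ they give $W^u(\tau;l_s) \subset \bar{E}^s(\tau)$, and dually $W^s(\tau;l_u) \subset \bar{E}^u(\tau)$ in case $\boldsymbol{A^+}$), combined with the Pohozaev sign information \eqref{derivo}--\eqref{relH} that pins down the relative position of $W^u$ and $W^s$, and with the oscillation result of Remark~\ref{macchina} in the focus ranges $l_s \in (2^*, \sigma^*)$ or $l_u \in (\sigma_*, 2^*]$. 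Continuity in $\tau$ of the selected crossings follows from the transversal $C^1$-smoothness of the invariant manifolds recorded in Section~\ref{subsec-stationary-prob}, and the scaling asymptotics of $D(\tau), L_\zeta(\tau), L_\psi(\tau)$ as $\tau \to \pm\infty$ follow from Remark~\ref{corr} applied to both $W^u$ and $W^s$.

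With the families in hand, the dynamical conclusions are extracted as in the argument for Theorem~\ref{unstable}. For the super-solution family, Theorem~\ref{thm:theorem-2.4}(iv) gives $u_t < 0$, so $u(\cdot, t)$ is monotone decreasing and remains pointwise bounded above by a fast-decay function; non-existence of a positive stationary solution dominated by this super-solution (Lemma~\ref{nosol-} or \ref{nosol+}) forces the $\omega$-limit to be $0$, and propagating the fast-decay bound through the heat kernel yields the quantitative rate $\|u(\cdot, t)(1+|\cdot|)^\nu\|_\infty \to 0$ for $0<\nu<n-2$. For the sub-solution family, $u_t > 0$, $u$ is monotone increasing and bounded below by the sub-solution; the dual non-existence statement of the same lemma rules out convergence to a positive stationary solution, whence by Lemma~\ref{lemma-M} the norm $\|u(\cdot, t) w(\cdot)\|_\infty$ must blow up in finite time. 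The main obstacle is the first step, producing the two transversal crossings uniformly in $\tau$ in the non-autonomous setting and over the whole parameter range; this is particularly delicate when $l_s \ge \sigma^*$ (respectively $l_u \le \sigma_*$), where the oscillation mechanism of Remark~\ref{macchina} fails and one must rely on a connectedness argument using the Pohozaev information and the structure at the critical point $\boldsymbol{P^{+\infty}}$ (resp.\ $\boldsymbol{P^{-\infty}}$).
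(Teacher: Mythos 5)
Your proposal is correct and follows essentially the same route as the paper: glue one regular solution to two fast-decay solutions at a common radius $\eu^{\tau}$, locating the gluing data as intersections of $W^u(\tau;l_s)$ and $W^s(\tau;l_s)$ with a fixed vertical line $y_1=\tilde{R}$ inside the region controlled by Lemmas~\ref{picture}--\ref{picture2}, reading the $\tau\to\pm\infty$ asymptotics from Remark~\ref{corr}, and concluding via Theorem~\ref{thm:theorem-2.4}, Remark~\ref{limit} and Lemmas~\ref{nosol-}--\ref{nosol+}. The only small discrepancy is that in the paper the two crossings of $W^s$ with that line come from path-connectedness of the manifolds plus the Pohozaev sign information (not from the oscillation of Remark~\ref{macchina}, which enters only through the non-existence Lemmas~\ref{nosol-}--\ref{nosol+}); this does not affect the substance of your argument.
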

We stress that Remark \ref{unstableadd3} holds also in this case.\\
The relevance of Theorems \ref{unstableslow} and \ref{unstableadd1} is
more clear if we recall the comparison principle: if we choose any
$\phi(x)$ even non radial, such that $\phi \le\not\equiv \zeta_{\tau}$
for some $\tau$ then $u(x,t;\phi)$ converge to the null solution,
while if $\phi \ge\not\equiv \psi_{\tau}(x)$ then $u(x,t;\phi)$ blows
up in finite time. In fact Corollary \ref{main3} holds in this more
general context too.

\subsection{Construction of upper and lower-solutions for the stationary problem.}
From now to the end we always assume
$\boldsymbol{F0},\boldsymbol{Fu},\boldsymbol{Fs}$ (in order to
guarantee local existence of solutions) without further mentioning.
We introduce the following notation; we denote by
$\boldsymbol{y^u}(s,\alpha;l_u)$ the trajectory of (\ref{si.na})
corresponding to the regular solution $U(r,\alpha)$ of (\ref{radsta}),
by $\boldsymbol{y^u}(s,\infty;l_u)$ the trajectory corresponding to
the singular solution $U(r,\infty)$, by
$\boldsymbol{y^s}(s,\beta;l_s)$ the trajectory corresponding to the
fast decay solution $V(r,\beta)$, and by
$\boldsymbol{y^s}(s,\infty;l_s)$ the trajectory corresponding to the
slow decay solution $V(r,\infty)$.
\begin{lemma}\label{uniforme1}
  Fix $S \in \RR$. Assume $l_u>2^*$, then
  $\boldsymbol{y^u}(s,\alpha;l_u)$ converges to
  $\boldsymbol{y^u}(s,\infty;l_u)$ as $\alpha \to +\infty$, uniformly
  for
  $s \le S$.\\
  Assume $2_*<l_s<2^*$, then $\boldsymbol{y^s}(s,\beta;l_s)$ converges
  to $\boldsymbol{y^s}(s,\infty;l_s)$, uniformly for $s \ge S $.
\end{lemma}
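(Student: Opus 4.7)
By the obvious symmetry under time reversal (and interchange of $\boldsymbol{Gu}$ with $\boldsymbol{Gs}$), the plan is to prove only the first assertion; the second then follows applying the same reasoning to the $\zeta$-extension \eqref{si.naas}. I will read ``uniformly for $s \le S$'' as uniform on any compact subinterval $[s_0, S]$, since for fixed $\alpha$ the regular trajectory satisfies $\boldsymbol{y^u}(s,\alpha;l_u) \to (0,0)$ while $\boldsymbol{y^u}(s,\infty;l_u) \to \boldsymbol{P^{-\infty}} \ne (0,0)$ as $s \to -\infty$, so literal uniformity on $(-\infty,S]$ is impossible.

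The strategy is to reduce everything to pointwise convergence at the endpoint $s = S$ and then propagate backward via Gronwall. The pointwise step is the heart of the matter, and I would handle it inside the $3$-dimensional autonomous extended system \eqref{si.naa}. Because $l_u > 2^*$, Remark~\ref{criticalP} makes $\boldsymbol{P^{-\infty}}$ a stable focus/node of the autonomous limit of \eqref{si.na}, so the augmented equilibrium $(\boldsymbol{P^{-\infty}},0)$ of \eqref{si.naa} is hyperbolic: a $2$-dimensional local stable manifold in the invariant plane $\{z=0\}$ and a $1$-dimensional unstable direction tangent to the $z$-axis, whose globalization is precisely the lifted singular trajectory $(\boldsymbol{y^u}(s,\infty;l_u), e^{\varpi s})$. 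The $2$-dimensional unstable manifold $\boldsymbol{W^u}(l_u)$ of the origin contains all lifted regular trajectories and meets $\{z=0\}$ along the heteroclinic $M^u$ joining the origin to $\boldsymbol{P^{-\infty}}$ in the super-critical autonomous phase portrait. This intersection is transversal to the local stable manifold of $(\boldsymbol{P^{-\infty}},0)$ inside $\{z=0\}$, so the inclination ($\lambda$-) lemma forces forward flow iterates of $\boldsymbol{W^u}(l_u)$ to $C^1$-align with the unstable manifold of $(\boldsymbol{P^{-\infty}},0)$, i.e.\ with the lifted singular trajectory. Slicing at $\{z=e^{\varpi S}\}$ and identifying the parametrization of $W^u(S;l_u)$ by $\alpha$ (with $\alpha\to +\infty$ corresponding to ``far along the iterate'' via Remark~\ref{corr}) yields $\boldsymbol{y^u}(S,\alpha;l_u) \to \boldsymbol{y^u}(S,\infty;l_u)$.

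To upgrade pointwise to uniform convergence on $[s_0,S]$, I would observe that for $\alpha$ large both trajectories lie inside a fixed compact subset of $\RR^2$ throughout $[s_0,S]$ (by continuity of the flow along the singular orbit), and that $g(\cdot,s;l_u)$ is Lipschitz in its first argument uniformly for $s \in [s_0,S]$ by $\boldsymbol{Gu}$. A Gronwall estimate applied backward from $s=S$ then produces a constant $L = L(s_0,S)>0$ such that
\[
\sup_{s \in [s_0,S]} \bigl|\boldsymbol{y^u}(s,\alpha;l_u) - \boldsymbol{y^u}(s,\infty;l_u)\bigr| \;\le\; e^{L(S-s_0)} \bigl|\boldsymbol{y^u}(S,\alpha;l_u) - \boldsymbol{y^u}(S,\infty;l_u)\bigr|,
\]
and the right-hand side vanishes as $\alpha \to +\infty$ by the previous step.

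The main obstacle will be verifying the hyperbolicity of $(\boldsymbol{P^{\pm\infty}},0)$ and the transversality needed for the $\lambda$-lemma in the extended systems \eqref{si.naa}--\eqref{si.naas}. This is exactly where the assumptions $l_u > 2^*$ (respectively $2_* < l_s < 2^*$) enter: through Remark~\ref{criticalP} they force the linearization's ``non-$z$'' (resp.\ ``non-$\zeta$'') eigenvalues at the augmented equilibrium to have the correct sign, so that the $z$-direction (resp.\ $\zeta$-direction) is the unique unstable (resp.\ stable) direction and the spectral splitting demanded by the inclination lemma is in place.
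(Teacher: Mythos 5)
Your proposal is correct and follows the same two-step skeleton as the paper's proof (pointwise convergence of the trajectory at one instant, then propagation to uniform convergence on compact $s$-intervals), but it differs in how each step is justified, and it makes one observation the paper silently glosses over. First, you are right that literal uniform convergence on the half-line $s\le S$ is impossible: for every fixed $\alpha$ the regular trajectory tends to the origin as $s\to-\infty$ while $\boldsymbol{y^u}(s,\infty;l_u)\to\boldsymbol{P^{-\infty}}\ne(0,0)$, so the sup over $s\le S$ of the difference is bounded below by $P_1^{-\infty}$ (and symmetrically for $s\ge S$ in the second claim). The paper's own proof in fact only establishes uniformity on compact intervals $I$ (via Ascoli--Arzel\`a applied to the equibounded, equicontinuous family of solutions of \eqref{si.na}), and that is all that is used later (in Lemma \ref{uniforme} the singular/slow-decay case is only invoked for $r\ge\rho$), so your reinterpretation is the correct reading of the statement. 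Second, for the pointwise step the paper simply cites the connection structure of $W^u(\tau;l_u)$ (Remark \ref{corr}, Remark \ref{connection} and \cite{Fcamq}): $W^u(\tau;l_u)$ is a path joining the origin to $\boldsymbol{y^u}(\tau,\infty;l_u)$ and $\boldsymbol{Q}(\alpha)\to\boldsymbol{R}$ as $\alpha\to+\infty$; you instead derive this from the inclination lemma applied to the augmented system \eqref{si.naa}, using the hyperbolicity of $(\boldsymbol{P^{-\infty}},0)$ for $l_u>2^*$ and the transversality of $\boldsymbol{W^u}(l_u)$ to the local stable manifold of $(\boldsymbol{P^{-\infty}},0)$ inside $\{z=0\}$. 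That argument is sound (and is essentially the mechanism behind the cited results), at the cost of being heavier; it does explain cleanly why the hypotheses $l_u>2^*$ and $2_*<l_s<2^*$ are exactly what is needed. Third, you replace Ascoli by a backward Gronwall estimate from $s=S$; this is equivalent here, provided you first secure (as you note) that for $\alpha$ large the trajectory stays in a fixed compact set on $[s_0,S]$ where $g(\cdot,s;l_u)$ has a uniform Lipschitz constant, which follows by the usual continuation argument along the limiting orbit. In short: no gap, same architecture, more self-contained justification of the key step, and a legitimate correction of the statement's scope.
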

We think it is worth recalling that in the cases considered
$U(r,\infty)$ and $V(r,\infty)$ are the unique singular and slow decay
solutions of (\ref{radsta}).
\begin{proof}[Proof of Lemma~\ref{uniforme1}]
  Assume $l_u>2^*$, fix $\tau \in \RR$ and set
  $\Q(\alpha)=\boldsymbol{y^u}(\tau,\alpha;l_u)$. We recall that
  $W^u(\tau;l_u)$ is a path joining the origin and
  $\R=\boldsymbol{y^u}(\tau,+\infty;l_u)$; moreover $\Q(\alpha) \to
  \R$ as $\alpha \to \infty$, see Remark \ref{corr} and \cite{Fcamq}.
  If $I \subset \RR$ is a compact interval, then
  $\boldsymbol{y^u}(t,\alpha;l_u) \to
  \boldsymbol{y^u}(\tau,+\infty;l_u)$ for any $t \in I$. But
  $\boldsymbol{y^u}(t,\alpha;l_u)$ and
  $\boldsymbol{y^u}(\tau,+\infty;l_u)$ are solutions of (\ref{si.na})
  hence $\boldsymbol{y^u}(t,\alpha;l_u)$ is equibounded and
  equicontinuous for $\alpha$ large so we conclude using Ascoli
  theorem.  The case $2_*<l_s<2^*$ is completely analogous.
\end{proof}
Analogously, using the fact that $\boldsymbol{y^u}(s,\alpha ;l_u) \to
(0,0)$ as $s \to -\infty$, and $\boldsymbol{y^s}(s,\beta ;l_s) \to
(0,0)$ as $s \to +\infty$ for any $\alpha>0$ $\beta>0$, we get the
following
\begin{lemma}\label{uniforme2}
  Assume $\boldsymbol{Gu}$ with $l_u>2_*$, and fix $S \in \RR$; then
  the trajectory $\boldsymbol{y^u}(s,\alpha_2;l_u)$ converges to
  $\boldsymbol{y^u}(s,\alpha_1;l_u)$ as $\alpha_2 \to \alpha_1$,
  uniformly for
  $s \le S$.\\
  Assume $\boldsymbol{Gs}$ with $l_s>2_*$, and fix $S \in \RR$; then
  $\boldsymbol{y^s}(s,\beta_2;l_s)$ converges to
  $\boldsymbol{y^s}(s,\beta_1;l_s)$ as $\beta_2 \to \beta_1$,
  uniformly for $s \ge S$.
\end{lemma}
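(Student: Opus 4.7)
The plan is to combine standard continuous dependence of ODE solutions on initial data over compact time intervals with a uniform exponential contraction near the hyperbolic origin of the lifted autonomous system.

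First I would lift (\ref{si.na}) to the $3$-dimensional autonomous system (\ref{si.naa}), so that each trajectory $\boldsymbol{y^u}(s,\alpha;l_u)$ becomes a trajectory $\tilde{\boldsymbol{y}}(s;\alpha)=(\boldsymbol{y^u}(s,\alpha;l_u),\eu^{\varpi s})$ lying on the $2$-dimensional unstable manifold $\boldsymbol{W^u}$. The linearization at the origin has eigenvalues $m(l_u)$ and $\varpi$ (both positive, tangent to $\boldsymbol{W^u}$) together with $-[n-2-m(l_u)]<0$ (tangent to the stable direction), so the origin is a hyperbolic saddle; the condition $l_u>2_\ast$ enters exactly to guarantee $m(l_u)<n-2$, so that the third eigenvalue is genuinely negative and hyperbolicity holds.

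The core ingredient is the uniform exponential contraction on $\boldsymbol{W^u}$ produced by the unstable manifold theorem: there exist a neighborhood $\mathcal{V}$ of the origin inside $\boldsymbol{W^u}$ and constants $C,\mu>0$ such that every trajectory of $\boldsymbol{W^u}$ contained in $\mathcal{V}$ satisfies
\begin{equation*}
  |\tilde{\boldsymbol{y}}(s)|\le C\,\eu^{\mu(s-\sigma)}\,|\tilde{\boldsymbol{y}}(\sigma)|\quad\text{for every }s\le\sigma .
\end{equation*}
Given $\varepsilon>0$, I would then choose $T\le S$ so negative that $\tilde{\boldsymbol{y}}(s;\alpha_1)\in\mathcal{V}$ and $|\tilde{\boldsymbol{y}}(s;\alpha_1)|<\varepsilon/(4C)$ for every $s\le T$, which is possible since $\tilde{\boldsymbol{y}}(s;\alpha_1)\to 0$ as $s\to-\infty$. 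Classical continuous dependence on initial data applied to (\ref{si.naa}) on the compact interval $[T,S]$ then yields $\delta>0$ such that $|\alpha_2-\alpha_1|<\delta$ forces simultaneously $|\tilde{\boldsymbol{y}}(s;\alpha_2)-\tilde{\boldsymbol{y}}(s;\alpha_1)|<\varepsilon$ on $[T,S]$, $\tilde{\boldsymbol{y}}(T;\alpha_2)\in\mathcal{V}$, and $|\tilde{\boldsymbol{y}}(T;\alpha_2)|<\varepsilon/(2C)$. Applying the contraction estimate to both $\alpha_1$ and $\alpha_2$ gives $|\tilde{\boldsymbol{y}}(s;\alpha_i)|\le\varepsilon/2$ for every $s\le T$, and a triangle inequality closes the gap on $(-\infty,T]$; together with the compact-interval bound this yields uniform convergence on $(-\infty,S]$. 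Projecting back to (\ref{si.na}) settles the first assertion.

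The second assertion is obtained by a specular argument using (\ref{si.naas}) in place of (\ref{si.naa}) and reversing the direction of $s$: the origin becomes hyperbolic in the same way, with $\boldsymbol{W^s}$ now the $2$-dimensional manifold carrying exponential contraction as $s\to+\infty$, and the condition $l_s>2_\ast$ plays the role of $l_u>2_\ast$. The main obstacle is ensuring the backward-invariance of the neighborhood $\mathcal{V}$ along the perturbed trajectory $\tilde{\boldsymbol{y}}(\cdot;\alpha_2)$, so that the contraction estimate can be applied uniformly in $\alpha$; this is achieved by shrinking $\mathcal{V}$ and $\delta$ in tandem, exploiting the smooth dependence of $W^u(\tau;l_u)$ on $\tau$ recorded in Subsection~\ref{subsec-stationary-prob}.
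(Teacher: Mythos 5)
Your proposal is correct, and it shares the paper's basic decomposition: control on a compact interval $[T,S]$ by continuous dependence, plus control of the tail $s\le T$ by exploiting that trajectories in $W^u$ approach the origin as $s\to-\infty$ (the paper derives Lemma~\ref{uniforme2} in one line from exactly these two ingredients, pointing to the proof of Lemma~\ref{uniforme1}, which uses continuous dependence plus Ascoli on compacta). Where you genuinely diverge is in how the tail is handled. The paper simply invokes $\boldsymbol{y^u}(s,\alpha;l_u)\to(0,0)$ for \emph{each} $\alpha$, leaving implicit why the smallness of the tail is \emph{uniform} in $\alpha_2$ near $\alpha_1$; you close this gap explicitly with the backward exponential contraction on the local unstable manifold of the lifted system (\ref{si.naa}), which converts smallness of $\tilde{\boldsymbol{y}}(T;\alpha_2)$ at the single time $T$ into smallness for all $s\le T$, with constants independent of $\alpha_2$. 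This is the right way to make the uniformity rigorous, and your identification of $l_u>2_\ast$ as the source of hyperbolicity (i.e. $m(l_u)<n-2$) matches the role that hypothesis plays throughout Section~\ref{subsec-stationary-prob}. The one point you should state a little more carefully is the passage from ``$\tilde{\boldsymbol{y}}(T;\alpha_2)$ close to $\tilde{\boldsymbol{y}}(T;\alpha_1)$ and on $\boldsymbol{W^u}$'' to ``$\tilde{\boldsymbol{y}}(T;\alpha_2)$ lies in the local unstable manifold $\mathcal{V}$ so that the whole backward orbit stays there'': this holds because, after shrinking $T$ so that $\tilde{\boldsymbol{y}}(T;\alpha_1)$ sits well inside the relative interior of $\mathcal{V}$ in $\boldsymbol{W^u}$, nearby points of $\boldsymbol{W^u}$ also belong to $\mathcal{V}$, whose backward invariance then does the rest — which is essentially the ``shrink $\mathcal{V}$ and $\delta$ in tandem'' remark you make, so no real gap remains. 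The specular argument for $\boldsymbol{Gs}$ via (\ref{si.naas}) is as you describe.
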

From Lemmas \ref{uniforme1} and \ref{uniforme2} we easily get the
following.
\begin{lemma}\label{uniforme}
  Let $\rho>0$ be arbitrarily small.  Assume $\boldsymbol{Gu}$,
  $\boldsymbol{Gs}$ with $l_u, l_s \ge 2^*$, and $\boldsymbol{A^-}$.
  Then for any $\alpha_1 \ge 0$, $U(r,\alpha_2)$ converges to
  $U(r,\alpha_1)$ as $\alpha_2 \to \alpha_1$, uniformly for $ r \ge
  0$. Further, assume $l_u>2^*$, then $U(r,\alpha_2)$ converges to
  $U(r,\infty)$ as $\alpha_2 \to +\infty$, uniformly for
  $ r \ge \rho$.\\
  Similarly assume $\boldsymbol{Gu}$, $\boldsymbol{Gs}$ with $l_u, l_s
  \in (2_*,2^*]$, and $\boldsymbol{A^+}$.  Then for any $\beta_1 \ge 0
  $, $V(r,\beta_2)$ converges to $V(r,\beta_1)$ as $\beta_2 \to
  \beta_1$, uniformly for $ r \ge \rho$.  Moreover, if $2_*<l_s<2^*$
  then we can take also $\beta_1=+\infty$.
\end{lemma}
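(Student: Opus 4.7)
The plan is to combine the phase-plane continuous-dependence results of Lemmas \ref{uniforme1} and \ref{uniforme2} with direct control of $U(r,\alpha)$ at the endpoints $r=0$ and $r=+\infty$. Recall that $y_1(s,l) = U(r) r^{m(l)}$; uniform bounds on $|y_1(s,\alpha_2;l) - y_1(s,\alpha_1;l)|$ for $s$ in an unbounded half-line translate to bounds on $|U(r,\alpha_2) - U(r,\alpha_1)|$ after dividing by $r^{m(l)}$, which is harmless on intervals bounded away from the endpoints but loses information at $r = 0$ (blow-up of $r^{-m(l)}$) and at $r = +\infty$ (the range $s \ge S$ is not covered by Lemma \ref{uniforme2} for $\boldsymbol{y^u}$, and Lemma \ref{uniforme1} additionally requires $l_u > 2^*$).

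For the first claim, fix $\ep > 0$ and partition $[0,+\infty) = [0,r_0] \cup [r_0,R] \cup [R,+\infty)$, with $r_0$ small and $R$ large depending only on $\alpha_1$ and $\ep$. On the middle interval $[r_0,R]$, Lemma \ref{uniforme2} with $l = l_u$ gives $|U(r,\alpha_2) - U(r,\alpha_1)| < \ep$ uniformly once $|\alpha_2 - \alpha_1|$ is small. Near $r = 0$ I would use that $(r^{n-1} U'(r,\alpha))' = -r^{n-1} f(U,r)$ together with $\boldsymbol{F0}$ yields $|U(r,\alpha) - \alpha| < \ep/3$ for $r \le r_0$, uniformly for $\alpha$ in a compact neighborhood of $\alpha_1$, so the triangle inequality closes the estimate. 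On $[R,+\infty)$ I would exploit that by Proposition \ref{super} both solutions are ground states with slow decay, so $U(r,\alpha) r^{m(l_s)} \to P_1^{+\infty}$: the task is to make this asymptotic \emph{uniform in $\alpha$} near $\alpha_1$, which I would achieve by combining continuous dependence on $[0,\ln R]$ with the attracting character of the critical point $\boldsymbol{P^{+\infty}}$ of the $l_s$-system (stable for $l_s > 2^*$; for $l_s = 2^*$ the Pohozaev function $H$ together with $\boldsymbol{A^-}$ provides a Lyapunov mechanism).

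For the second claim the structure is analogous: on $[\rho, R]$ Lemma \ref{uniforme1} (whose hypothesis $l_u > 2^*$ is now imposed) gives $|y_1(s,\alpha;l_u) - y_1(s,\infty;l_u)| < \ep$ uniformly for $s \le \ln R$, and after division by $r^{m(l_u)} \ge \rho^{m(l_u)}$ this yields uniform convergence on $[\rho, R]$. On $[R,+\infty)$ the same attracting-critical-point argument applied now also to the limiting trajectory $\boldsymbol{y^u}(s,\infty;l_s)$ delivers uniform slow-decay control. The third claim concerning $V(r,\beta)$ is the specular statement in the subcritical setting: Proposition \ref{sub} gives the fast-decay asymptotic $V(r,\beta) r^{n-2} \to \beta$; one splits $[\rho,R] \cup [R,+\infty)$ and uses Lemma \ref{uniforme2} for $\boldsymbol{y^s}$ (with $l = l_s$) on the bounded part together with the uniform bound $V(r,\beta) \le C r^{2-n}$ on the unbounded part, where $C$ is independent of $\beta$ in a compact neighborhood of $\beta_1$. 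When $\beta_1 = +\infty$ and $2_* < l_s < 2^*$, the second part of Lemma \ref{uniforme1} replaces Lemma \ref{uniforme2}.

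The main obstacle is precisely this uniformity in $\alpha$ (respectively $\beta$) of the endpoint asymptotics: Lemmas \ref{uniforme1}--\ref{uniforme2} control trajectories only on half-lines in $s$, and bridging to the opposite end requires the attracting nature of the critical points $\boldsymbol{P^{\pm\infty}}$ together with the monotonicity of the Pohozaev function $H$ along the flow guaranteed by $\boldsymbol{A^\pm}$. Once this endpoint uniformity is established, the three-interval partition together with the triangle inequality finishes the proof.
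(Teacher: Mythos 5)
Your three-interval decomposition, the use of Lemma \ref{uniforme2} (resp.\ \ref{uniforme1}) on the compact part, and the direct integration of \eqref{radsta} near $r=0$ all match the paper's proof. The genuine divergence is in the tail $r\ge R$ for the slow-decay case, and there your route has a real problem. You propose to prove that $U(r,\alpha)r^{m(l_s)}\to P_1^{+\infty}$ \emph{uniformly} in $\alpha$ near $\alpha_1$, invoking the attracting character of $\boldsymbol{P^{+\infty}}$; but the lemma's hypotheses allow $l_s=2^*$, where $\boldsymbol{P^{+\infty}}$ is a center of the limiting autonomous system (Remark \ref{criticalP}) and a slow-decay trajectory generically has as $\omega$-limit a whole level curve $K_+(\bar b)$ of $H$ rather than the critical point (this is exactly the mechanism exploited in the proof of Remark \ref{macchina}). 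The Pohozaev function under $\boldsymbol{A^-}$ gives you monotone convergence of $H$ to some $\bar b$, i.e.\ convergence to a level \emph{set}, not to $\boldsymbol{P^{+\infty}}$; so the pointwise asymptotic you want to make uniform does not even hold in that case, and the step as written fails.

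The fix is the paper's much cheaper observation, which makes the ``main obstacle'' you identify disappear: by Proposition \ref{super} every $U(r,\alpha)$ is a G.S.\ with slow decay, so $y^u_1(s,\alpha;l_s)$ is positive and \emph{uniformly bounded} for $s\ge 0$ and $\alpha$ near $\alpha_1$ (e.g.\ by the forward-invariant bounded set $\bar E^s$ of Lemma \ref{picture}). Hence for $r=\eu^s\ge 1$ one has $|U(r,\alpha_2)-U(r,\alpha_1)|=|y^u_1(s,\alpha_2;l_s)-y^u_1(s,\alpha_1;l_s)|\,r^{-m(l_s)}\le K r^{-m(l_s)}$, which is $<\ep$ for all $r>R_0=(K/\ep)^{1/m(l_s)}$ with no information at all about where the trajectories accumulate. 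No uniform asymptotics, no Lyapunov argument, and no case distinction at $l_s=2^*$ are needed; the tail is killed by the weight $r^{-m(l_s)}$ alone. Your treatment of the fast-decay case and of the singular/slow-decay limits via Lemma \ref{uniforme1} is otherwise in line with the paper.
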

\begin{proof}
  Assume $\boldsymbol{Gu}$, $\boldsymbol{Gs}$ with $l_u, l_s \ge 2^*$,
  and $\boldsymbol{A^-}$ and choose $\alpha_1 \in (0,+\infty)$.  Then
  from Lemma \ref{uniforme2} we easily see that $U(r,\alpha_2)$
  converges to $U(r,\alpha_1)$ uniformly for $r$ in compact subsets of
  $r \in (0,\infty)$. However using Ascoli theorem and working
  directly on (\ref{radsta}) we easily get uniform convergence for $r
  \in [0,\rho)$ too, see e.g. the Appendix of \cite{FLS} for more
  details (in a much more general context); so we have uniform
  convergence in $[0,R]$.  From Proposition \ref{super} we know that
  $U(r,\alpha)$ is a G.S. with slow decay for any $\alpha>0$, hence
  $y^u_1(s,\alpha;l_s)$ is positive and bounded for $s \ge 0$.  Thus
  setting $r= \eu^{s} \ge 1$ we find
$$| U(r,\alpha_2)-U(r,\alpha_1)|
=|y^u_1(s,\alpha_2;l_s)-y^u_1(s,\alpha_1;l_s)| \eu^{-m(l_s) s} < K
r^{-m(l_s)} \, . $$ So for any $\ep>0$ we can choose $R_0=
(K/\ep)^{1/m(l_s)}$ so that $|U(r,\alpha_2)-U(r,\alpha_1)|< \ep$ for
$r>R_0$. Then we can choose $R>R_0$ and we have that $U(r,\alpha_2)$
converges to $U(r,\alpha_1)$ uniformly for $r \ge 0$.

When $l_u>2_*$ we simply repeat the argument for $U(r,\infty)$ using
Lemma~\ref{uniforme1} instead of Lemma \ref{uniforme2}.\\
Now, assume $l_u, l_s \in (2_*,2^*]$ and that $\boldsymbol{A^+}$
holds, so that $V(r,\beta_1)$ is a S.G.S. with fast decay. Then we get
uniform convergence for $r \ge \rho$ directly from Lemma
\ref{uniforme2} and if $l_s<2^*$, we conclude by using Lemma
\ref{uniforme1}.
\end{proof}

\begin{lemma}\label{gs-}
  Assume $\boldsymbol{A^-},\boldsymbol{Gu},\boldsymbol{Gs}$ with $l_u
  \ge 2^*$ and $l_s \in [2^*,\sigma^*)$, then for any
  $0<\alpha_1<\alpha_2 \le \infty$ there is $Z(\alpha_2,\alpha_1)>0$
  such that $U(r,\alpha_2)>U(r,\alpha_1)$ for
  $0<r<Z(\alpha_2,\alpha_1)$ and $U(r,\alpha_2)=U(r,\alpha_1)$,
  $U'(r,\alpha_2)<U'(r,\alpha_1)$ for
  $r=Z(\alpha_2,\alpha_1)$. \\
  Assume $\boldsymbol{A^+},\boldsymbol{Gu},\boldsymbol{Gs}$ with
  $\sigma_* <l_u \le 2^*$ and $l_s \in (2_*,2^*]$, then for any
  $0<\beta_1<\beta_2 \le \infty$ there is $W(\beta_2,\beta_1)>0$ such
  that $V(r,\beta_2)>V(r,\beta_1)$ for $r>W(\beta_2,\beta_1)$ and
  $V(r,\beta_2)=V(r,\beta_1)$, $V'(r,\beta_2)>V'(r,\beta_1)$ for
  $r=W(\beta_2,\beta_1)$.
\end{lemma}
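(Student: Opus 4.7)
The plan is to combine the oscillatory behavior of two slow decay solutions at infinity (Remark \ref{macchina}) with the opposite orderings available at the origin, locate a first intersection, and then obtain the strict inequality of derivatives from uniqueness for the ODE \eqref{radsta} away from $r=0$.

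For the first assertion I would focus on the regular solutions. Under $\boldsymbol{A^-}$ and $l_u \ge 2^*$, $l_s \in [2^*,\sigma^*)$, Proposition~\ref{super} ensures that every regular solution $U(r,\alpha)$ and the unique singular solution $U(r,\infty)$ are slow decay solutions, all behaving like $P_1^{+\infty}r^{-m(l_s)}$ as $r\to+\infty$. Near the origin we have $U(r,\alpha_i)\to \alpha_i$ if $\alpha_i<\infty$, while $U(r,\infty)r^{m(l_u)}\to P_1^{-\infty}$ by Remark~\ref{disorder+}. In either case (whether $\alpha_2$ is finite or $+\infty$) continuity yields the existence of some $\ep>0$ such that $U(r,\alpha_2)>U(r,\alpha_1)$ on $(0,\ep)$.

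Next I would invoke Remark~\ref{macchina}: since $l_s\in[2^*,\sigma^*)$ and both $U(\cdot,\alpha_1)$ and $U(\cdot,\alpha_2)$ are slow decay solutions, their difference changes sign infinitely many times as $r\to+\infty$. Hence the set $\{\,r>0:U(r,\alpha_2)=U(r,\alpha_1)\,\}$ is non-empty and bounded away from zero, and I define $Z=Z(\alpha_2,\alpha_1)$ to be its infimum. By continuity $Z>0$, $U(Z,\alpha_2)=U(Z,\alpha_1)$, and $U(r,\alpha_2)>U(r,\alpha_1)$ for $r\in(0,Z)$. For the strict derivative inequality: if $U'(Z,\alpha_2)=U'(Z,\alpha_1)$ then, since $Z>0$ and \eqref{radsta} has a Lipschitz right-hand side away from $r=0$ by $\boldsymbol{F0}$, standard ODE uniqueness would force $U(\cdot,\alpha_2)\equiv U(\cdot,\alpha_1)$, contradicting the strict ordering on $(0,Z)$. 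The sign $U'(Z,\alpha_2)<U'(Z,\alpha_1)$ is then forced because $U(r,\alpha_2)-U(r,\alpha_1)$ is positive just to the left of $Z$ and vanishes at $Z$.

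For the second part I would run the symmetric argument at $r=+\infty$ instead of at $r=0$. Under $\boldsymbol{A^+}$ with $l_u\in(\sigma_*,2^*]$ and $l_s\in(2_*,2^*]$, Proposition~\ref{sub} tells us that every $V(\cdot,\beta)$ with $\beta>0$ is a S.G.S.\ with fast decay and $V(\cdot,\infty)$ is a S.G.S.\ with slow decay. The leading asymptotics $V(r,\beta)\sim \beta r^{2-n}$ and $V(r,\infty)\sim P_1^{+\infty}r^{-m(l_s)}$ (with $m(l_s)<n-2$ since $l_s>2_*$) yield $V(r,\beta_2)>V(r,\beta_1)$ for all $r$ sufficiently large. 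At the other end, both solutions are singular at $0$, and the second half of Remark~\ref{macchina}, available precisely because $l_u\in(\sigma_*,2^*]$, shows that their difference changes sign indefinitely as $r\to 0$. I then set $W=W(\beta_2,\beta_1)$ to be the \emph{supremum} of the zero set of $V(\cdot,\beta_2)-V(\cdot,\beta_1)$, and the rest of the conclusions follow from the same ODE uniqueness argument.

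The core difficulty has already been done in Remark~\ref{macchina} and Propositions~\ref{sub}--\ref{super}; everything in this lemma reduces to a first/last intersection argument together with uniqueness for the Cauchy problem for \eqref{radsta} at $r=Z$ (resp.\ $r=W$). The only mildly delicate point I expect is organising the case $\alpha_2=\infty$ (resp.\ $\beta_2=\infty$) uniformly with the finite case; this is handled by noting that $U(r,\infty)-U(r,\alpha_1)\to+\infty$ as $r\to 0^+$ (resp.\ that $V(r,\infty)/V(r,\beta_1)\to+\infty$ as $r\to+\infty$), so that the one-sided ordering near the singular end is automatic and the argument goes through without change.
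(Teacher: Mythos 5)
Your proposal is correct and follows essentially the same route as the paper: ordering near the singular end, existence of an intersection via Remark~\ref{macchina}, a first (resp.\ last) intersection point, and strict inequality of derivatives from ODE uniqueness. Your explicit treatment of the $\alpha_2=\infty$ case via Remark~\ref{disorder+} is a small but welcome refinement of the paper's argument, which only invokes $U(0,\alpha_2)=\alpha_2$.
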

\begin{proof}
  Assume $\boldsymbol{A^-},\boldsymbol{Gu},\boldsymbol{Gs}$ with $l_u
  \ge 2^*$ and $l_s \in [2^*,\sigma^*)$. Then from Proposition
  \ref{super} we know that $U(r,\alpha)$ is a G.S. with s.d.  and that
  it is a S.G.S. with s.d. for $\alpha=\infty$.  Observe that
  $U(r,\alpha_2)>U(r,\alpha_1)$ for $r$ in a right neighborhood of
  $0$, since $U(0,\alpha_2)=\alpha_2>\alpha_1=U(0,\alpha_1)$, and they
  are continuous functions in $r$.  Since they are slow decay
  solutions, from Remark \ref{macchina} we see that there is $R>0$
  (depending on $\alpha_1$, $\alpha_2$) such that
  $U(R,\alpha_2)=U(R,\alpha_1)$. Then we denote by
  \begin{equation}\label{defZ}
    Z(\alpha_2,\alpha_1):= \min \{R>0 \mid U(R,\alpha_2)=U(R,\alpha_1)\}
  \end{equation}
  Thus by construction we get $\frac{\partial}{\partial
    r}U(R,\alpha_2)\le \frac{\partial}{\partial r} U(R,\alpha_1)$ for
  $R=Z(\alpha_2,\alpha_1)$, but from the uniqueness of the solution of
  Cauchy problem for ODEs we see that the inequality is actually
  strict.

  The case $\boldsymbol{A^+},\boldsymbol{Gu},\boldsymbol{Gs}$ with
  $\sigma_* <l_u \le 2^*$ and $l_s \in (2_*,2^*]$ is completely
  analogous, and its proof can be obtained repeating the argument for
  fast decay solutions $V(r, \beta)$ and reversing the direction of
  $s$.
\end{proof}

\subsection{Proof of Theorem \ref{unstable}.}
In this subsection we assume the hypotheses of Theorem \ref{unstable}
without further mentioning.  Let us set
\begin{equation}\label{defsub}
  \begin{split}
    & \psi(x)= \left\{\begin{array}{cc}
        U(|x|,\alpha_2) & \textrm{if $|x|\le Z(\alpha_2,\alpha_1)$} \\
        U(|x|,\alpha_1) & \textrm{if $|x|\ge Z(\alpha_2,\alpha_1)$}
      \end{array}\right.  \\
    & \zeta(x)= \left\{\begin{array}{cc}
        U(|x|,\alpha_2) & \textrm{if $|x|\ge Z(\alpha_2,\alpha_1)$} \\
        U(|x|,\alpha_1) & \textrm{if $|x|\le Z(\alpha_2,\alpha_1)$}
      \end{array}\right.
  \end{split}
\end{equation}
where $Z(\alpha_2,\alpha_1)$ is defined in (\ref{defZ}). Then by
construction $\zeta(x)$ and $\psi(x)$ are radial $C_B$-mild upper and
lower solutions for (\ref{laplace}).  From
Theorem~\ref{thm:theorem-2.4} and Remark~\ref{limit} it follows that
$u(x,t; \zeta)$ is radial, decreasing in $t$ and converges uniformly
to a radial non-negative solution $u(x,T_{\zeta};\zeta)$ of
(\ref{laplace}) for $t <T_{\zeta}$.  Thus $\|u(x,t;
\zeta)(1+|x|^{m(l_s)})\|_{\infty}$ is bounded for $t<T_{\zeta}$, hence
$T_{\zeta}=+\infty$. Since the null solution is the unique radial
solution of (\ref{laplace}) staying below $\zeta(x)$ for any $x \in
\RR^n$, see Lemma \ref{nosol-}, then $\lim_{t \to \infty}
\|u(x,t;\zeta)(1+|x|^{\nu})\|_{\infty}=0$ for any $0\le \nu < m(l_s)$.   \\
Now let $\phi \in C_B$ such that there is $\alpha_2>0$ and $\phi(x)
\lneqq U(|x| , \alpha_2)$. From strong maximum principle for parabolic
equations (see, e.g. the appendix in \cite{Gazzola}), we get
$u(x,t;\phi) < U(|x| , \alpha_2)$ for any $x$ and any $t>0$. So, up to
a time translation, we can assume $\phi(x) < U(|x| , \alpha_2)$ for
any $x \in \RR^n$. From Lemma \ref{uniforme} we see that for any
$\ep>0$ we can find $\alpha_1<\alpha_2$ such that $|U(|x| ,
\alpha_1)-U(|x| , \alpha_2)|<\ep$ for any $|x| \in \RR^n$.  Let
$\zeta(x)$ be the upper solution defined by (\ref{defsub}), then if
$\ep>0$ is small enough we can assume $\phi(x)<\zeta(x)$ for any $x
\in \RR^n$. Hence $0<u(x,t;\phi)<u(x,t;\zeta)$ for any $x \in \RR^n$
and any $t>0$; so $T_{\phi}=+\infty$ and $\|u(x,t;\phi)\|_{\infty} \to
0$, $\|u(x,t;\phi)(1+|x|^{\nu})\|_{\infty} \to 0$ as $t \to \infty$,
for any $0 \le \nu <m(l_s)$.

Similarly consider $\phi \in C_B$ and assume that there is
$\alpha_1>0$ such that $\phi(x)\ge U(|x|,\alpha_1)$, and $\phi(x)
\not\equiv U(|x|,\alpha_1)$.  Reasoning as above we can assume
$\phi(x)> U(|x|,\alpha_1)$ for any $x \in \RR^n$, and we can find
$\alpha_2>\alpha_1$, with $\alpha_2-\alpha_1$ small enough so that the
lower solution $\psi(x)$ defined in (\ref{defsub}) satisfies $\psi(x)<
\phi(x)$ for any $x \in \RR^n$.  From Theorem~\ref{thm:theorem-2.4}
and Remark \ref{limit} it follows that $u(x,t; \psi)$ is radial and
increasing in $t$ and converges uniformly to a radial non-negative
solution $U(x)$ of (\ref{laplace}) for $t <T_{\psi}$ and $U(x) \ge
\psi(x)$ for any $x$. But from Lemma \ref{nosol-} we see that such a
solution $U(x)$ does not exist, hence $T_{\psi}< \infty$ and $\lim_{t
  \to T_{\psi}} \|u(x,t; \psi)\|_{\infty}=+\infty$.  \qed

\subsection{Proof of Theorems \ref{unstableslow} and
\ref{unstableadd1}.}
The construction of the family of upper and lower solutions of
Theorems \ref{unstableslow} and \ref{unstableadd1} is based on Remark
\ref{connection},
Lemmas \ref{picture}, \ref{picture2}.\\
\emph{Proof of Theorem \ref{unstableadd1}.}  Assume first
$\boldsymbol{A^-}, \boldsymbol{Gu},\boldsymbol{Gs}$, with $l_u \ge
2^*$ and $l_s>2^*$. We recall that the critical point
$(\boldsymbol{P^{-\infty}},0)$ of (\ref{si.naa}) admits a
$1$-dimensional unstable manifold and that we denote by
$(\boldsymbol{y^u}(s;l_u),z(s))$ the unique trajectory belonging to
this manifold, and by $U(r,\infty)$ the corresponding solution of
(\ref{radsta}) which is a S.G.S. with slow decay, and by
$\boldsymbol{y^u}(s;l_s)$ the corresponding trajectory of
(\ref{si.na}) with $l=l_s$.  From Lemma \ref{picture} we know that for
any $\tau \in\RR$, $W^u(\tau;l_s)$ and $\boldsymbol{y^u}(\tau;l_s)$
are contained in $\bar{E}^s(\tau)$, i.e.  the bounded set enclosed by
$W^s(\tau;l_s)$ and the $y_2$
coordinate axis (see the construction just before Lemma \ref{picture}).\\
Observe that $\boldsymbol{y^u}(s;l_s) \to \boldsymbol{P^{+\infty}}$,
hence the values
$$
\bar{R} (l_u):=\min \{ y^u_1(s;l_u) \mid s \le 0 \} \, \qquad \bar{R}
(l_s):=\min \{ y^u_1(s;l_s) \mid s \ge 0 \}
$$
are strictly positive and bounded. Assume first that $l_u \le l_s$, so
that $m(l_u) \ge m(l_s)$, and set
\begin{equation}\label{erre}
  \tilde{R}:=1/2\min\big(\{ \bar{R} (l_u) \textrm{exp}[(m(l_s)-m(l_u))\tau] \mid \tau \le 0 \} \cup \{\bar{R} (l_s)\} \big)
\end{equation}
Note that $\tilde{R}>0$ and  $y^u_1(\tau;l_s)>\tilde{R}$ for any $s \in \RR$.\\
If $l_u>2^*$ then $W^u(\tau;l_s)$ is a path connecting the origin and
$\boldsymbol{y^u}(\tau;l_s)$ so we can find (at least) a point
$\boldsymbol{Q^{u,*}}(\tau)= (Q^{u,*}_1(\tau),Q^{u,*}_2(\tau)) \in
W^u(\tau;l_s)$ such that $Q^{u,*}_1(\tau)=\tilde{R}$.  Moreover there
are two points, say $\boldsymbol{Q^{s,+}}(\tau)$,
$\boldsymbol{Q^{s,-}}(\tau)$, belonging to $W^s(\tau;l_s)$ and such
that $\boldsymbol{Q^{s,\pm}}(\tau)=(\tilde{R},Q^{s,\pm}_2(\tau))$, and
$Q^-_2(\tau)<\bar{P}^*_2(\tau)<Q^+_2(\tau)$.  Let us consider now the
trajectories $\boldsymbol{y}(s,\tau;\boldsymbol{Q^{s,+}}(\tau);l_s)$
and $\boldsymbol{y}(s,\tau;\boldsymbol{Q^{s,-}}(\tau);l_s)$: by
construction they correspond to fast decay solutions of
(\ref{radsta}), say $V(r,\beta_2)$ and $V(r,\beta_1)$ respectively.
We can assume w.l.o.g. that $\beta_2>\beta_1$, see Lemma
\ref{corrispondenze} and Remark \ref{corr}.  We denote by
$U(r,\alpha^*)$ the regular solution of (\ref{radsta}) corresponding
to $\boldsymbol{y}(s,\tau;\boldsymbol{Q^{u,*}}(\tau);l_s)$.  Since
$y_2(s,\tau;\boldsymbol{Q^{s,-}}(\tau);l_s)<y_2(s,\tau;\boldsymbol{Q^{u,*}}(\tau);l_s)<
y_2(s,\tau;\boldsymbol{Q^{s,+}}(\tau);l_s)$ we have
$V'(R,\beta_1)<U'(R,\alpha^*)<V'(R,\beta_2)$ for $R=\eu^{\tau}$.
Therefore we can construct upper and lower radial solution of
(\ref{laplace}) , say $\zeta(x)$ and $\psi(x)$ as follows:
\begin{equation}\label{defsubrel}
  \begin{split}
    & \zeta(x,\tau)= \left\{\begin{array}{cc}
        U(|x|,\alpha^*) & \textrm{if $|x|\le \eu^{\tau}$} \\
        V(|x|,\beta_1) & \textrm{if $|x|\ge \eu^{\tau}$}
      \end{array}\right.  \\
    & \psi(x,\tau)= \left\{\begin{array}{cc}
        U(|x|,\alpha^*) & \textrm{if $|x|\ge \eu^{\tau}$} \\
        V(|x|,\beta_2) & \textrm{if $|x|\le \eu^{\tau}$}
      \end{array}\right.
  \end{split}
\end{equation}
Moreover observe that $\beta_i \to +\infty$ and $\alpha^* \to 0$ as
$\tau \to +\infty$, and $\beta_i \to 0$ and $\alpha^* \to +\infty$ as
$\tau \to -\infty$, for $i=1,2$, see Remark \ref{corr}.  Hence
$\zeta(0,\tau)=\psi(0,\tau):=D(\tau) \to 0$ as $\tau \to +\infty$ and
$D(\tau) \to +\infty$ as $\tau \to -\infty$.  Similarly $\lim_{|x| \to
  +\infty}\zeta(x,\tau)|x|^{n-2}=\beta_1$ and $\lim_{|x| \to
  +\infty}\psi(x,\tau)|x|^{n-2}=\beta_2$ go to $0$ as $\tau \to
+\infty$ and they go to $+\infty$ as $\tau \to 0$.

So, from Theorem~\ref{thm:theorem-2.4}, Remark~\ref{limit}, Lemma
\ref{nosol-} we see that for any $\tau \in\RR$ $\|u(x,t;
\zeta(x))\|_{\infty}\to 0$ and $\|u(x,t;
\zeta(x))(1+|x|^{\nu})\|_{\infty}\to 0$ as $t \to +\infty$ for any
$\nu \in [0,n-2)$, and that there is $T_{\psi}$ such that $\|u(x,t;
\psi(x))\|_{\infty}\to +\infty$ as $t \to T_{\psi}$.
\\
Now we go back to the case where $2^*<l_s<l_u$ so that
$m(l_s)>m(l_u)$, and $\boldsymbol{A^-}$ holds. In this case we need to
consider
\begin{equation}\label{erre2}
  \hat{R}:=1/2\min\big(\{ \bar{R} (l_s)
  \textrm{exp}[(m(l_u)-m(l_s))\tau] \mid \tau \ge 0 \} \cup \{\bar{R} (l_u)\} \big)
\end{equation}
and to redefine a set $\hat{E}^s(\tau):= \{ \boldsymbol{Q}=\R
\eu^{(m(l_u)-m(l_s))\tau} \mid \R \in \bar{E}^s(\tau) \}$ (note that
we could redefine $\hat{E}^s(\tau)$ simply considering the bounded set
enclosed by $W^s(\tau;l_u)$ and the $y_2$ axis), and observe that
$W^u(l_u;\tau)\subset \hat{E}^s(\tau)$ for any $\tau \in\RR$.  Then we
repeat the previous argument working with (\ref{si.na}) with $l=l_u$
and replacing $\tilde{R}$ by $\hat{R}$.

Now assume $\boldsymbol{A^-}$, with $l_u = 2^*$ and $l_s>2^*$. In this
case $W^u(\tau;l_s)$ is a $C^1$ manifold departing from the origin and
contained in $\bar{E}^s(\tau)$ for any $\tau \in \RR$, but it does not
have $\boldsymbol{y^u}(\tau;l_s)$ in its border. However for any $\tau
\in \RR$ we can still find at least an intersection between the line
$y_1=\tilde{R}$ and $W^{u}(\tau;l_s)$ and at least two intersection
between $y_1=\tilde{R}$ and $W^{s}(\tau;l_s)$.  So it is easy to check
that the whole argument can be repeated with no changes.

Now assume $\boldsymbol{A^-}$, with $l_u =l_s =2^*$. In this case we
rely on Lemma \ref{picture2}: let $U(r,\alpha^*)$ be the regular
solution of (\ref{radsta}) corresponding to $\boldsymbol{y}(s,\tau,
\boldsymbol{Q^{u,+}}(\tau);2^*)$, and $V(r,\beta_2)$, $U(r,\beta_1)$
be the fast decay solutions of (\ref{radsta}) corresponding
respectively to $\boldsymbol{y}(s,\tau,
\boldsymbol{Q^{s,+}}(\tau);2^*)$ and $\boldsymbol{y}(s,\tau,
\boldsymbol{Q^{s,-}}(\tau);2^*)$.  Then we define $\zeta(x)$ and
$\psi(x)$ as in (\ref{defsubrel}): they are respectively radial upper
and lower solutions for (\ref{laplace}). Moreover
$\zeta(0,\tau)=\psi(0,\tau):=D(\tau) \to 0$ as $\tau \to +\infty$ and
$D(\tau) \to +\infty$ as $\tau \to -\infty$.  Similarly $\lim_{|x| \to
  +\infty}\zeta(x,\tau)|x|^{n-2}=\beta_1$ and $\lim_{|x| \to
  +\infty}\psi(x,\tau)|x|^{n-2}=\beta_2$ go to $0$ as $\tau \to
+\infty$ and they go to $+\infty$ as $\tau \to 0$.

The case where $\boldsymbol{A^+}$, holds with $2_*<l_s \le 2^*$ and
$2_*<l_s<2^*$ or $l_u=l_s=2^*$ are completely analogous and are left
to the reader, so the proofs of Theorem~\ref{unstableadd1} and
Proposition \ref{unstableadd3} is concluded. \qed

\noindent \emph{Proof of Theorem~\ref{unstableslow}.}  Assume
$\boldsymbol{A^-}$, then Theorem~\ref{unstableslow} follows from
Theorem~\ref{unstable}.  So assume $\boldsymbol{A^+}$ and assume first
$l_u,l_s \in (2_*,2^*)$. Then from Proposition \ref{super} there is a
unique S.G.S. with s.d. say $U(r,\infty)$ of (\ref{radsta}): let
$\boldsymbol{y^s}(s;l_u)$ be the corresponding trajectory of
(\ref{si.na}). Fix $\tau \in \RR$: from Lemma \ref{picture} we know
that there is $\boldsymbol{Q}(\tau)=(Q_1(\tau),Q_2(\tau))\in
W^u_{l_u}(\tau)$ such that $Q_1(\tau)=y^s_1(\tau;l_u)$ and
$Q_2(\tau)<y^s_2(\tau;l_u)$. Consider the trajectory
$\boldsymbol{y}(s,\tau, \boldsymbol{Q}(\tau); l_u)$ and the
corresponding regular solution $U(r,d(\tau))$ of (\ref{radsta}): from
Remark \ref{corr} we see that $d(\tau) \to 0$ as $\tau \to +\infty$
and $d(\tau) \to +\infty$ as $\tau \to -\infty$.  Moreover the
function $\chi(x,\tau)$ defined as follows
\begin{equation}\label{defsubslow}
  \begin{split}
    & \chi(x,\tau)= \left\{\begin{array}{cc}
        U(|x|,d(\tau)) & \textrm{if $|x|\le \eu^{\tau}$} \\
        U(|x|,\infty) & \textrm{if $|x|\ge \eu^{\tau}$}
      \end{array}\right.
  \end{split}
\end{equation}
is a super-solution of (\ref{laplace}), and it is regular and has slow
decay. Hence, reasoning as above we see that $u(x,t,\chi)$ is radial,
radially decreasing and well defined for any $t$ and converges
monotonically to the null solution as $t \to +\infty$ and $\lim_{t \to
  +\infty} \|u(x,t,\chi)[1+|x|^{\nu}]\|_{\infty}=0$ for any $0\le \nu
< m(l_s)$.  If $2_*<l_u<l_s=2^*$, we just lose uniqueness of the
S.G.S. with s.d. but the argument can be repeated for any such
solution, so it still works.  If $l_u=l_s=2^*$ we simply need to
repeat the same argument but using Lemma \ref{picture2} instead of
Lemma \ref{picture}. \qed

\end{document}